
\documentclass[12pt]{amsart}
\usepackage{amssymb,amsmath,amsfonts,latexsym}
\usepackage{hyperref}
\setlength{\textheight}{9in}\setlength{\textwidth}{475pt}
\oddsidemargin -0mm \evensidemargin -0mm \topmargin -0pt

\setcounter{footnote}{1}

\newtheorem{theorem}{Theorem}[section]
\newtheorem{lemma}[theorem]{Lemma}
\newtheorem{proposition}[theorem]{Proposition}
\newtheorem{corollary}[theorem]{Corollary}
\theoremstyle{definition}

\newtheorem{example}[theorem]{Example}

\newtheorem{remark}[theorem]{Remark}
\numberwithin{equation}{section}

\begin{document}

\title[On Rational Orbits in Some Prehomogeneous Vector Spaces]{On Rational Orbits in Some Prehomogeneous Vector Spaces}

\author[Sayan Pal]{Sayan Pal}

\address{Indian Statistical Institute, Statistics and Mathematics Unit, 8th Mile, Mysore Road, Bangalore, 560059, India}

\email{syn.pal98@gmail.com}

\begin{abstract} \noindent Let $k$ be a field with characteristic different from $2$. In this paper, we describe the $k$-rational orbit spaces in some irreducible prehomogeneous vector spaces $(G,V)$ over $k$, where $G$ is a connected reductive algebraic group defined over $k$ and $V$ is an irreducible rational representation of $G$ with a Zariski dense open orbit. We parametrize all composition algebras over the field $k$ in terms of the orbits in some of these representations. This leads to a parametric description of the reduced Freudenthal algebras of dimensions $6$ and $9$ over $k$ (if $\text{char}(k)\neq 2,3$). We also get a parametrization for the involutions of the second kind defined on a central division $K$-algebra $B$ with center $K$, a quadratic extension of the underlying field $k$.

\end{abstract}

\keywords{Prehomogeneous vector spaces, symplectic groups, composition algebras, Freudenthal algebras}

\subjclass{17A75, 17C60, 20G05, 20G15}

\maketitle

\tableofcontents

\section{Introduction}\label{I}

The main aim of this paper is to explore the $k$-rational orbit spaces in some prehomogeneous vector spaces over a field $k$. We parametrize several algebraic structures including the composition algebras and the reduced Freudenthal algebras of dimensions $6$ and $9$, in terms of the orbit spaces in some prehomogeneous vector spaces. In addition to the new results, we also get alternate proofs and generalizations of some existing results, which we proceed to describe below briefly.

\vspace{2 mm}

Let $G$ be a connected reductive algebraic group and $V$ be a rational representation of $G$, defined over a field $k$. Assume that $(G,V)$ is a prehomogeneous vector space (henceforth called a PV) with some relative invariant $f\in k[V]$, i.e., $G$ has a Zariski dense open orbit in $V$. Let $V^{ss}$ be the complement of the hyperplane in $V$ defined by the relative invariant $f$, which is called the set of \textit{semi-stable points}. Then $(V^{ss})^{c}=\{ x\in V:f(x)=0\}\subset V$, the hyperplane defined by $f$. By $V_{k}$, $V^{ss}_{k}$ and $G_{k}$ we denote the set of all $k$-rational points in $V$, $V^{ss}$ and $G$, respectively. One expects the orbit space $V^{ss}_{k}/G_{k}$ to correspond to some arithmetic objects; for example, see (\cite{WY}) and (\cite{YA}). In these two papers, A. Yukie and David J. Wright have described those arithmetic objects for some prehomogeneous vector spaces. In many cases, the arithmetic interpretations of these orbit spaces are not known. Moreover, one may ask if the $G_{k}$-orbits in the hyperplane $(V^{ss}_{k})^{c}\subset V_{k}$ also parametrize similar kinds of arithmetic objects like the orbit space $V^{ss}_{k}/G_{k}$.

  \vspace{2 mm}

  Let $(C,N_{C})$ be any split composition algebra with quadratic norm $N_{C}$ defined over the field $k$, or the field $k$ itself (char$(k)\neq 2$). Then we have the reduced Freudenthal algebra $\mathcal{H}_{3}(C,I_{3})$, if $\text{char}(k)\neq 2,3$; and we can also define an algebra structure on the vector space \[Z(C)=\left(\begin{array}{cc}
k & \mathcal{H}_{3}(C,I_{3}) \\
\mathcal{H}_{3}(C,I_{3}) & k
\end{array}\right).\] Each of these $Z(C)$'s above is a structurable algebra with respect to a certain multiplication and a quartic form $J$ defined on it (see \cite{AB} for details). These algebras are irreducible representations of certain subgroups $G(C)\subset GL(Z(C))$, which leave the quartic form $J$ invariant (see \cite{BGL}, \cite{LM} for details). We describe all the cases in Table $\ref{1}$, where $V_{6}$ denotes the vector space of dimension $6$ and $\wedge^{3}_{0}V_{6}\subset \wedge^{3}V_{6}$ (see Section \ref{3} for details). We also refer the reader to have a look at (\cite{RG}).

 \begin{table}[h]
    \centering
    \caption{The structurable algebras as $G(C)$-representations}\label{1}
    \begin{tabular}{|c|c|c|c|} \hline 
         $\text{dim}(C)$ & $G(C)$&  $Z(C)$ & $\text{dim}(Z(C))$ \\ \hline 
         $1$& $Sp_{6}$ &  $\wedge_{0}^{3}V_{6}$&  $14$ \\ \hline 
         $2$& $SL_{6}$ &  $\wedge^{3}V_{6}$&  $20$\\ \hline 
         $4$& $Spin_{12}$&  half-spin representation&  $32$\\ \hline 
         $8$& $E_{7}$&  minuscule representation&  $56$\\ \hline
    \end{tabular}
\end{table}

\vspace{2 mm}

The cases where $\text{dim}(C)=4,8$, have been discussed in detail in (\cite{IJ}) and (\cite{HJ}), respectively. In this paper, we discuss the case where $\text{dim}(C)=1$, i.e., $C=k$. In that case, $Z(C)\simeq \wedge_{0}^{3}V_{6}$ (see Section \ref{3} for the notation) is a $14$-dimensional faithful irreducible representation of $G(C)=Sp_{6}$ with $1$-dimensional ring of invariants $k[Z(C)]^{Sp_{6}}=k[J]$, generated by the $Sp_{6}$-invariant quartic form $J$ defined on $Z(C)$ (see \cite{BGL} for details). We will describe the representation and the invariant $J$ in detail later in Section \ref{3}. We denote $Z(C)$ for $C=k$, simply by $Z$. As we are not going to use the algebra structure of $Z$, we may assume that $\text{char}(k)\neq 2$ and consider $Z$ as a vector space which is a $Sp_{6}$-representation with the action defined in Section \ref{3}. This $Sp_{6}$-representation $Z$ can be constructed from the split octonion algebra $Zorn(k)$ (see Section \ref{3}), which contains all composition algebras with dimensions $1,2$ and $4$ as subalgebras. We can also check that $(Sp_{6}\times GL_{1}, Z)$ is a PV with the $Sp_{6}$-invariant quartic homogeneous polynomial $J$ as relative invariant and the action of $GL_{1}$ is given by scalar multiplication (see Section \ref{5}, for details). We denote the set of all $k$-rational points of $(Sp_{6}\times GL_{1})$ by $(Sp_{6}\times GL_{1})(k)$. Let $Z^{ss}=\{z\in Z:J(z)\neq 0\} \subset Z$ be the set of all semi-stable points and $Z^{ss}_{k}$ be the set of all $k$-rational points in $Z^{ss}$. Then we can raise the following natural questions for the PV $(Sp_{6}\times GL_{1}, Z)$:

\begin{enumerate}
    \item What arithmetic objects are classified by the $(Sp_{6}\times GL_{1})(k)$-orbits in $Z^{ss}_{k}$?
    
    \item Do the orbits in $(Z^{ss}_{k})^{c}$ have some arithmetic interpretations? 
\end{enumerate}

\vspace{1 mm}

We prove that the $Sp_{6}(k)$-orbits in $J^{-1}\{0\}(k)-\{0\}\subset Z_{k}$ are in bijection with the isomorphism classes of composition algebras with dimensions $1,2$ and $4$, respectively; and the remaining orbits give us the composition algebras of dimension $8$, up to isomorphism. This can be expected from the construction of $Z$, as mentioned above. These results help us to get a description of the arithmetic objects parametrized by the $(Sp_{6}\times GL_{1})(k)$-orbits in $(Z^{ss}_{k})^{c}$ and $Z^{ss}_{k}$, respectively; answering the questions $(1)$ and $(2)$ above. In the end, we answer the same questions for the PV $(GSp_{6}\times GL_{1}, Z)$ (see Section \ref{5}), which leads to an alternate proof of Yukie's result in (\cite{YA1}, Theorem $1.10$, Theorem $4.11$).

\vspace{2 mm}

Let $(C,N_{C})$ be an octonion algebra defined over $k$ and $\mathcal{J}$ be the exceptional Jordan algebra $\mathcal{H}_{3}(C,I_{3})$. Let $GE_{6}$ be the group of all similitudes of the cubic norm defined on $\mathcal{J}$ (see \cite{KY}). Then $(GE_{6}\times GL_{2}, \mathcal{J}\oplus \mathcal{J})$ is a PV with a relative invariant of degree $12$ and the $k$-rational orbits in the semi-stable set classify the tuples $(\mathcal{J}^{\prime},L^{\prime})$ up to isomorphism, where $\mathcal{J}^{\prime}$ is an isotope of $\mathcal{J}$ and $L^{\prime}\subset \mathcal{J}^{\prime}$ a cubic \~etale subalgebra (see \cite{KY} for details). Here we will see a similar classification for the tuples $(C,F)$ up to isomorphism, where $C$ is an octonion over $k$ and $F\subset C$ a composition subalgebra of dimension $2$, in terms of the $k$-rational orbit space $Z^{ss}_{k}/(GSp_{6}\times GL_{1})(k)$.

\vspace{2 mm}

Again, we consider $(C,N_{C})$ to be an octonion algebra with $1_{C}\in C$ as the identity element. Let $V_{C}=1^{\perp}_{C}\subset C$ be the orthogonal complement of $1_{C}$ with respect to the quadratic norm $N_{C}$ and $\text{Aut}(C)$ be the group of all algebra automorphisms of $(C,N_{C})$. Then $V_{C}$ is invariant under the action of $\text{Aut}(C)$ and the $k$-rational orbits in $V_{C}(k)$ are given by the fibers of the restriction of the norm $N_{C}$ to $V_{C}$ (see Section \ref{5}). If $C$ is split, this result follows from (\cite{HJ}, Chapter I, Proposition $1.2$), over the number fields. We give a general proof for an arbitrary octonion $C$, over an arbitrary field k with characteristic different from $2$. We will also see that $(\text{Aut}(C)\times GL_{1},V_{C})$ is a PV (see Section \ref{5}) with the restriction of $N_{C}$ to $V_{C}$ as a relative invariant; and the $k$-rational orbit space in the semi-stable set is in bijection with the composition algebras of dimension $2$, if $C$ is split.

 \vspace{2 mm}

Now, any Freudenthal division algebra of dimension $9$ is of the form $\mathcal{H}(B,\sigma)$, where $B$ is a central division $K$-algebra with an involution of the second kind $\sigma$ defined on it and the center $K$ is a quadratic extension of $k$ (see Section \ref{6}). Then $(NP(\mathcal{H}(B,\sigma))\times GL_{1},\mathcal{H}(B,\sigma))$ is a PV, where $NP(\mathcal{H}(B,\sigma))$ is the group of all isometries of the cubic norm defined on $\mathcal{H}(B,\sigma)$. We prove that the orbit space in the semi-stable set of this PV classifies all involutions of the second kind defined on $B$.

 \vspace{2 mm}

We can see that for the PV's, which we have discussed in Section \ref{5} (for example, $(GSp_{6}\times GL_{1},Z)$), the semi-stable set is a single orbit over the algebraic closure $\overline{k}$ and we have assigned an algebraic structure defined over $\overline{k}$, say $A$, to the semi-stable set. Then, the $k$-rational orbit space in the semi-stable set classifies the algebraic structures $B$ over $k$, which becomes isomorphic to $A$ after we change the base to $\overline{k}$. For $(GSp_{6}\times GL_{1},Z)$, we have assigned the pair of composition algebras $(Zorn(\overline{k}),\overline{k}\times \overline{k})$ defined over $\overline{k}$ to the semi-stable set $Z^{ss}$; and this is the only such pair consisting of an octonion and its quadratic subalgebra over $\overline{k}$. Here, the $k$-rational orbit space $Z^{ss}_{k}/(GSp_{6}\times GL_{1})(k)$ classifies the tuples $(C,F)$ consisting of an octonion $C$ and a quadratic subalgebra $F\subset C$ over the field $k$ such that $(C\otimes \overline{k},F\otimes \overline{k})\simeq (Zorn(\overline{k}),\overline{k}\times \overline{k})$. In other words, the $k$-rational orbit space in the semi-stable set classifies the $k$-forms of some algebraic structure defined over $\overline{k}$ (see \cite{SJP}, Chapter III, Section $1$, p. 121). This gives an insight into the fact that the rational orbits in the semi-stable set of a prehomogeneous vector space parametrize several algebraic structures over the underlying field $k$.

 \vspace{2 mm}

 We start with some preliminaries in Section \ref{2}. In Section \ref{3}, we introduce the symplectic group $Sp_{6}$ and construct the irreducible representation of dimension $14$ for the same group, using the algebra structure of the split octonion algebra. In Section \ref{4}, we prove that the $k$-rational orbit space of this representation classifies all composition algebras defined over the field $k$ (see Theorem \ref{M}). In Section \ref{5}, we show that the same phenomenon occurs for the orbit spaces in the same vector space of dimension $14$, under the actions of the groups $Sp_{6}\times GL_{1}$ and $GSp_{6}\times GL_{1}$ (see Theorem \ref{5.1}, \ref{5.2}, \ref{5.3}, \ref{5.4}, \ref{5.6}). We have also described the orbit spaces for some other PV's including $(\text{Aut}(C)\times GL_{1},1^{\perp}_{C})$, where $(C,N_{C})$ is any octonion algebra defined over $k$ and $1_{C}\in C$ is the identity. Finally, in Section \ref{6}, we have discussed what we can get for the Freudenthal algebras from our parametrization of composition algebras, and described the PV's associated to the reduced Freudenthal algebras. We have also computed the orbit spaces for the PV's associated to the Freudenthal division algebras of dimension $9$. The work on the case where the Freudenthal algebra is a division algebra of dimension $27$ is in progress.

 \vspace{2 mm}

   Throughout this paper, we assume that the field $k$ has characteristic different from $2$ unless otherwise mentioned. By $X_{k}$ (or $X(k)$) we denote the set of all $k$-rational points of the object $X$. By orbits in a representation, we only consider the orbits of non-zero elements, which we call the non-zero orbits. Let $(G,V)$ be a representation and $U$ be a $G$-orbit in $V$. We say that $G$ satisfies the Witt condition for the orbit $U$ if $G_{k}$ acts transitively on $U_{k}$. The Witt condition is satisfied for all orbits in the last two representations in Table \ref{1} (see \cite{IJ}, \cite{HJ}).

\section{Preliminaries}\label{2}

In this section, we define the notion of a prehomogeneous vector space and introduce some basics of composition algebras and quadratic forms over a field $k$ with $\text{char}(k)\neq 2$. We also discuss some properties of Freudenthal algebras defined over a field $k$ with $\text{char}(k)\neq 2,3$, and state some results on Galois cohomology that will be needed later. For a detailed study on these topics, we refer to (\cite{SK}), (\cite{IJ1}), (\cite{SV}), (\cite{LTY}), (\cite{KMRT}) and (\cite{SJP}).

\vspace{2 mm}

\noindent \textbf{Prehomogeneous vector space (PV):} Let $G$ be a connected reductive algebraic group defined over a field $k$ and $V$ be a rational representation of $G$ over $k$. Then $(G,V)$ is called a \textit{prehomogeneous vector space} (denoted by PV) if the following two conditions are satisfied:

\begin{enumerate}
    \item $\exists $ a homogeneous polynomial $f\in k[V]$ and a group homomorphism $\chi:G\rightarrow GL_{1}$ such that $f(g.x)=\chi(g)f(x), \forall g\in G, x\in V$;
    \item $V^{ss}=\{x\in V:f(x)\neq 0\}$ is a single $G$-orbit over $\overline{k}$.
\end{enumerate}

Here, the polynomial $f$ is called \textit{relative invariant}, and the set $V^{ss}=\{x\in V: f(x)\neq 0\}$ is called the set of \textit{semi-stable points} (or \textit{generic points}), which is a Zariski dense open orbit. If $x\in V^{ss}$ and $H=\text{Stab}_{G}(x)$, then $\text{dim}(G)-\text{dim}(H)=\text{dim}(V)$. In (\cite{SK}), the above definition of PV is the definition of regular PV's with reductive groups (see \cite{SK}, Remark $26$, page $73$). However, we will consider the above definition for a PV, following the notion in (\cite{IJ1}). We say that a PV $(G,V)$ is \textit{irreducible} if $V$ is irreducible as a rational $G$-representation. 

\vspace{1 mm}

Let $G\subset GL_{n}$ be a connected reductive algebraic group defined over the field $k$, for some positive integer $n$. Let $G_{1}=G\times GL_{p}$ and $G_{2}=G\times GL_{q}$, where $p+q=n$. We have a natural action of $G_{1}$ and $G_{2}$ on the vector spaces $W_{1}=V_{n}\otimes V_{p}$ and $W_{2}=V^{\ast}_{n}\otimes V_{q}$, respectively; where $V_{n},V_{p},V_{q}$ are vector spaces with dimensions $n,p,q$, respectively, and $V^{\ast}_{n}$ denotes the dual of $V_{n}$. Then we have the following proposition from (\cite{IJ1}, Proposition $3.1$), which is a refinement of (\cite{SK}, page $37$-$38$, Proposition $7,9$; page $73$, Remark $26$).

\begin{proposition}[Igusa, 1987]
    $(G_{1},W_{1})$ is a PV if and only if $(G_{2},W_{2})$ is a PV. If $W^{ss}_{1}\subset W_{1}$ and $W^{ss}_{2}\subset W_{2}$ are the sets of semi-stable points, the $k$-rational orbit spaces in these two sets are in bijection, i.e., $W^{ss}_{1}(k)/G_{1}(k)\longleftrightarrow W^{ss}_{2}(k)/G_{2}(k)$.
\end{proposition}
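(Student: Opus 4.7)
The plan is to apply the classical castling transform, reducing both PV conditions to the same statement about $G$-orbits on a Grassmannian. View $W_1 = V_n \otimes V_p$ as the space of $n \times p$ matrices, and let $W_1^{\circ} \subset W_1$ be the open locus of rank-$p$ matrices (nonempty since $p \leq n$). Sending $x \in W_1^{\circ}$ to its column span $\mathrm{im}(x) \subset V_n$ yields a $G$-equivariant $GL_p$-torsor $W_1^{\circ} \to \mathrm{Gr}(p, V_n)$. Analogously, viewing $W_2 = V_n^* \otimes V_q$ as the space of $q \times n$ matrices, the map $y \mapsto \ker(y)$ on the rank-$q$ locus $W_2^{\circ}$ is a $G$-equivariant $GL_q$-torsor onto $\mathrm{Gr}(p, V_n)$. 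The natural pairing $W_2 \otimes W_1 \to \mathrm{Mat}_{q \times p}$ makes these two torsors compatible: for $x, y$ of maximal rank, $y \cdot x = 0$ iff $\mathrm{im}(x) = \ker(y)$, so both torsors live canonically over the same $G$-variety.

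Consequently, $(G_1, W_1)$ is a PV iff $G$ has a dense orbit on $\mathrm{Gr}(p, V_n)$, iff $(G_2, W_2)$ is a PV, because the dense $G_i$-orbit on $W_i$, when it exists, necessarily lives inside $W_i^{\circ}$. Write $U$ for the common open $G$-orbit on $\mathrm{Gr}(p, V_n)$. Any $G_i$-semi-invariant polynomial on $W_i$ is in particular $GL_p$- (resp.\ $GL_q$-) semi-invariant, hence descends to a global section of some line bundle on the Grassmannian; the resulting section is $G$-semi-invariant, and its zero locus is precisely $\mathrm{Gr}(p, V_n) \setminus U$. Thus $W_i^{ss}$ coincides with the preimage of $U$ under the quotient map. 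Now Hilbert 90 gives $H^1(k, GL_m) = 1$ for every $m$, so every $k$-point of $\mathrm{Gr}(p, V_n)$ lifts to a $k$-point of the torsor $W_1^{\circ}$, yielding $W_1^{ss}(k)/GL_p(k) = U(k)$. Passing to $G(k)$-orbits gives $W_1^{ss}(k)/G_1(k) = U(k)/G(k)$, and the symmetric argument with $W_2$ produces the claimed bijection.

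The main obstacle is justifying the two identifications: first, that the relative invariant $f_i$ is genuinely the pullback (up to a power) of the canonical $G$-semi-invariant section on $\mathrm{Gr}(p, V_n)$, which is needed to identify $W_i^{ss}$ with the preimage of $U$; and second, that the torsor quotients $W_i^{\circ}/GL_{*}$ are well-behaved enough that the $k$-rational quotient is computed by Hilbert 90. Both are standard once phrased correctly: the first follows because $GL_p$-semi-invariants on $n \times p$ matrices of full rank are generated by $p \times p$ minors (Pl\"ucker sections), and the second is the familiar consequence that $GL_m$-torsors over $\mathrm{Spec}(k)$ are trivial by Hilbert's Theorem 90.
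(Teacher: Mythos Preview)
The paper does not prove this proposition; it is quoted directly from Igusa \cite{IJ1}, Proposition~3.1 (itself a refinement of results in Sato--Kimura \cite{SK}), so there is no in-paper argument to compare against. Your Grassmannian approach is the standard conceptual proof of the castling equivalence and is essentially correct; Igusa's original treatment in \cite{IJ1} proceeds more concretely via matrix manipulations, but the content is the same.

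One imprecision worth flagging: your intermediate claim ``$(G_1,W_1)$ is a PV iff $G$ has a dense orbit on $\mathrm{Gr}(p,V_n)$'' is not literally true under the paper's definition of PV, which demands a non-constant relative invariant whose nonvanishing locus is the open orbit. For instance, if $G = GL_n$ then $G$ acts transitively on $\mathrm{Gr}(p,V_n)$, yet the complement of the open $G_1$-orbit in $W_1$ (the non-full-rank locus) has codimension $q+1 \geq 2$, so no such relative invariant exists. The correct intermediate condition is that $G$ has a dense orbit on $\mathrm{Gr}(p,V_n)$ \emph{whose complement is a divisor}. This does not affect your final conclusion, since both PV conditions are equivalent to this corrected condition, and your ``main obstacle'' paragraph already contains the needed ingredient: the relative invariant on $W_1$ descends to a $G$-semi-invariant section of some $\mathcal{O}(d)$ on the Grassmannian cutting out the orbit complement, and pulling that section back to $W_2^{\circ}$ and extending by Hartogs (the complement $W_2 \setminus W_2^{\circ}$ has codimension $p+1 \geq 2$) produces the relative invariant on the other side. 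Your Hilbert~90 argument for the bijection of $k$-rational orbit spaces is exactly right.
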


So, due to the above result, if $(G_{1},W_{1})$ is a PV and we can determine the orbit space in the semi-stable set for $(G_{1},W_{1})$, the same gets determined for the PV $(G_{2},W_{2})$. We say that two PV's $(G_{1},W_{1})$ and $(G_{2},W_{2})$ in the above form are \textit{castling transforms} of each other, and we write it as $(G_{1},W_{1})\sim_{CT}(G_{2},W_{2})$. In general, if $(G,V)$ and $(H,W)$ are two PV's such that $\exists$ a finite family of PV's $\{(G_{i},W_{i})\}^{n}_{i=1}$ with $$(G,V)\sim_{CT} (G_{1},W_{1}), \text{ } (G_{n},W_{n})\sim_{CT} (H,W), \text{ } (G_{i},W_{i})\sim_{CT} (G_{i+1},W_{i+1})$$ for $i=1,..,n-1$, we say $(G,V)$ is \textit{castling equivalent} to $(H,W)$. This is an equivalence relation on the family of PV's. So, it is enough to determine the orbit space in the semi-stable set for one representative from an equivalence class. But, for the complement of the semi-stable set, the orbit space may not be preserved under castling equivalence. We will see such an example in Section \ref{5} (see Remark \ref{CE}). For more details on this equivalence relation, we refer to (\cite{IJ1}, p. 270-271) and (\cite{SK}, Section $2,4$).

 \begin{example} We consider $G=GL_{n}$ defined over the field $k$ and $V=\text{Sym}_{n}$, the space of all symmetric $n\times n$ matrices. We have an action of $G$ on $V$ defined by $g.x=gxg^{t}$, for $g\in G, x\in V$. With this action $(G,V)$ is a PV with the determinant as a relative invariant with respect to the character $\chi(g)=\text{det}(g)^{2}, \text{ for } g\in G$. The set of semi-stable points $V^{ss}$ is the set of all non-singular symmetric matrices, which is a single orbit over $\overline{k}$ (the algebraic closure of $k$). The stabilizer of $I_{n}\in V^{ss}$ is the orthogonal group $O_{n}=\{g\in G:gg^{t}=I_{n}\}$. We have $\text{dim}(O_{n})=n(n-1)/2$. So, $\text{dim}(G)-\text{dim}(O_{n})=\text{dim}(V)$. Here, $V^{ss}_{k}/G_{k}$ classifies the non-degenerate quadratic forms of rank $n$ up to isometry and $(V^{ss}_{k})^{c}/G_{k}$ classifies the isometry classes of quadratic forms with rank less than or equal to $n-1$. So in this example, the orbits in the complement of the semi-stable set classify similar algebraic structures, as done by the orbit space in the semi-stable set.
\end{example}

\vspace{2 mm}

\noindent\textbf{Composition algebras:} Let $k$ be a field with $\text{char}(k)\neq 2$. A \textit{composition algebra} $C$ over $k$ is a unital (not necessarily associative) algebra with a non-degenerate quadratic form $N_{C}$ defined on it such that $N_{C}(xy)=N_{C}(x)N_{C}(y)$, $\forall x,y\in C$. We call the quadratic form $N_{C}$ the \textit{norm} of the composition algebra and write the algebra as $(C,N_{C})$. We can consider the field $k$ itself as a composition algebra $C$ with the quadratic norm defined by $N_{C}(x)=x^{2}$, $x\in k$. We denote the bilinear form associated to the quadratic form $N_{C}$ by $b_{N_{C}}$, i.e., $b_{N_{C}}(x,y)=N_{C}(x+y)-N_{C}(x)-N_{C}(y),$ for $ x,y\in C$; and we define a map $C\rightarrow C$, called the \textit{conjugation involution}, by $x\mapsto \overline{x}=b_{N_{C}}(x,1_{C})1_{C}-x$ for $x\in C$ and $1_{C}\in C$ is the identity element. We can prove that $N_{C}(x)=x\overline{x}, \forall x\in C$ (see \cite{SV}, Chapter $1$).

\vspace{1 mm}

Let $C$ be a composition algebra over $k$ and $D\subset C$ be a finite-dimensional composition subalgebra such that $D\neq C$. We can choose $a\in D^{\perp}$ such that $N_{C}(a)\neq 0$ and let $\lambda=-N_{C}(a)$. Then $D_{1}=D\bigoplus Da$ is a composition subalgebra of $C$, where the product, norm and conjugation are defined by the following formulas, respectively (see \cite{SV}, Chapter $1$, Proposition $1.5.1$).

\begin{center}
    $(x+ya).(u+va)=(xu+\lambda \overline{v}y)+(vx+y\overline{u})a$, for $x,y,u,v\in D;$\\
    \vspace{1 mm}
    $N_{D_{1}}(x+ya)=N_{C}(x)-\lambda N_{C}(y)$, for $x,y\in D;$\\
    \vspace{1 mm}
    $\overline{(x+ya)}=\overline{x}-ya$, for $x,y\in D$.
\end{center}

Now, we consider a composition algebra $D$ with the norm $N_{D}$ over $k$ and a scalar $\lambda\in k^{\times}=k-\{0\}$. We define a multiplication on $C=D\bigoplus D$ by $(x,y).(u,v)=(xu+\lambda \overline{v}y,vx+y\overline{u})$ and a quadratic norm $N_{C}$ by $N_{C}((x,y))=N_{D}(x)-\lambda N_{D}(y)$, where $x,y,u,v\in D$. Then $C$ is a composition algebra with norm $N_{C}$ if $D$ is associative and $C$ is associative if and only if $D$ is commutative and associative (see \cite{SV}, Chapter $1$, Proposition $1.5.3$). By this process, we can generate a composition algebra with double dimension from a given associative composition algebra. This process is called \textit{Cayley-Dickson doubling}. All composition algebras over the field $k$ can be obtained by repeated Cayley-Dickson doubling starting from the field $k$ itself, and the composition algebras of dimension $8$ are non-associative (see \cite{SV}, Chapter $1$, Theorem $1.6.2$).

\vspace{1 mm}

Composition algebras exist only in dimensions $1,2,4$ and $8$ (see \cite{SV}, Chapter 1 for details). A composition algebra of dimension $2$ is either a quadratic field extension over the field $k$ or isomorphic to $k\bigoplus k$. The composition algebras with dimensions $4$ and $8$ are called \textit{quaternion} and \textit{octonion algebras}, respectively.

\vspace{1 mm}

We define an \textit{isomorphism} between two composition algebras as an isomorphism of the underlying algebras. An isomorphism between two composition algebras gives us an isometry between the quadratic norms. If there is an isometry between the norms of two composition algebras, they are isomorphic (see \cite{SV}, Chapter $1$).

\vspace{1 mm}

If the norm $N_{C}$ on a composition algebra $C$ over $k$ is isotropic, i.e., $\exists $ $ x\in C - \{0\}$ such that $N_{C}(x)=0$, we call it \textit{split composition algebra}. If this is not the case, then $\forall x\in C-\{0\}, N_{C}(x)\neq 0$ and we can see that $N_{C}(x)^{-1}\overline{x}$ is an inverse of $x$. Such an algebra $C$ is called \textit{division composition algebra}. Split composition algebras exist in dimensions $2,4$ and $8$, and are unique up to isomorphism.

\vspace{3 mm}

\noindent \textbf{Pfister forms:} For $n$ elements $a_{1},a_{2},..,a_{n}\in k^{\times}$, we define an $n$-fold \textit{Pfister form} to be the $2^{n}$ dimensional quadratic form $\bigotimes_{i=1}^{n} \langle 1,-a_{i}\rangle$ over $k$. We also denote it by $\langle\langle a_{1},a_{2},.., a_{n}\rangle\rangle$. Since any $n$-fold Pfister form, say $q$, represents $1$, we have $q\simeq\langle 1\rangle\perp q^{\prime}$; where $q^{\prime}$ is called the \textit{pure subform} of $q$. By Witt's cancellation, isometry type of $q^{\prime}$ is uniquely determined by that of $q$. A Pfister form is isotropic if and only if it is hyperbolic. The norm of a composition algebra defined over a field $k$, is always a Pfister form over $k$. For more details on Pfister forms, we refer to (\cite{LTY}, Chapter X).

\vspace{3 mm}

\noindent \textbf{Discriminant:} Let $(V,q)$ be a non-degenerate quadratic space of dimension $n$ over the field $k$ and $(q_{ij})$ be the matrix of $q$ with respect to some fixed basis. The \textit{discriminant} of $q$ is defined by \[\text{disc}(q)=(-1)^{n(n-1)/2}\text{det}((q_{ij})).{k^{\times}}^{2}\in k^{\times}/{k^{\times}}^{2},\]

\noindent (see \cite{KMRT}, Conventions and Notations). We say $q$ is a \textit{trivial discriminant} quadratic form over $k$ if $\text{det}((q_{ij}))\in k^{\times 2}$. If $q$ has rank $3$, this is equivalent to $\text{disc}(q)=(-1).{k^{\times}}^{2}$.

\vspace{1 mm}

 Let $K/k$ be a quadratic extension and $(V,h)$ be a non-degenerate $K/k$-hermitian space of rank $n$ over $K$ with respect to the non-trivial involution on $K$. Then the \textit{discriminant} of $h$ is defined by \[\text{disc}(h)=(-1)^{n(n-1)/2}\text{det}((h_{ij})). N(K/k)\in k^{\times}/N(K/k)\]

\noindent (see \cite{KMRT}, Chapter II), where $(h_{ij})$ is the hermitian matrix representing $h$ with respect to some fixed basis of $V$ and $N(K/k)\subset k^{\times}$ is the group of all norm values of non-zero elements in $K$. We say $h$ is a \textit{trivial discriminant} $K/k$-hermitian form if $\text{det}((h_{ij}))$ is a $K/k$-norm. If $h$ has rank $3$, this is equivalent to $\text{disc}(h)=(-1).N(K/k)$.

\vspace{3 mm}

\noindent\textbf{Zorn algebra:} On the $k$-vector space $k^{3}$ we have an alternating bilinear map $\times : k^{3}\times k^{3}\rightarrow k^{3}$ defined by $x\times y=(x_{2}y_{3}-x_{3}y_{2}, x_{3}y_{1}-x_{1}y_{3}, x_{1}y_{2}- x_{2}y_{1})^{t}$, where $x=(x_{1},x_{2},x_{3})^{t},y=(y_{1},y_{2},y_{3})^{t}\in k^{3}$. Let us consider the vector space of dimension $8$, \[Zorn(k)=\left(\begin{array}{cc}
k & k^{3} \\
k^{3} & k
\end{array}\right).\]

\noindent We can define an algebra structure on the above space with the multiplication $Zorn(k)\times Zorn(k)\rightarrow Zorn(k)$ defined by

  \[\left(\begin{array}{cc}
  a & x \\
  y & b
  \end{array}\right). 
  \left(\begin{array}{cc}
  a^{\prime} & x^{\prime} \\
  y^{\prime} & b^{\prime}
  \end{array}\right) 
  =
  \left(\begin{array}{cc}
  aa^{\prime}+x^{t}y^{\prime} & ax^{\prime}+b^{\prime}x+y\times y^{\prime} \\
  a^{\prime}y+by^{\prime}+x\times x^{\prime} & bb^{\prime}+y^{t}x^{\prime} 
  \end{array}\right),\]

\noindent where $a,b,a^{\prime},b^{\prime}\in k$ and $x,y,x^{\prime},y^{\prime}\in k^{3}$. This is a non-associative $k$-algebra with $\left(\begin{array}{cc}
1 & 0 \\
0 & 1
\end{array}\right)$
as the identity element. This algebra is called the \textit{Zorn algebra} of vector matrices and is isomorphic to the split octonion algebra obtained by doubling the split quaternion algebra $M_{2}(k)$ with determinant as quadratic norm. The norm on $Zorn(k)$ is defined by \[N(\left(\begin{array}{cc}
  a & x \\
  y & b
  \end{array}\right))= ab-x^{t}y, \text{ where } a,b\in k \text{ and } x,y \in k^{3}. \]

\vspace{3 mm}

\noindent \textbf{Octonion algebras associated to rank $3$ hermitian forms:} Let $K/k$ be a quadratic field extension with the standard involution defined on it. We consider $(V,h)$ as a non-degenerate $K/k$-hermitian space of rank $3$ with trivial discriminant. Take $C=K\bigoplus V$, a vector space of dimension $8$ over $k$.  We define a quadratic form on $C$ by $N_{C}(a,x)=N(a)+h(x,x)$, where $ a \in K, x\in V$ and $N$ is the $K/k$-norm. Then we can define a multiplication on $C$, with respect to which $C$ forms an octonion algebra over $k$ with $N_{C}$ as the multiplicative norm (see \cite{JN}, Theorem $3$) and $C\bigotimes_{k} K\simeq Zorn(K)$. The algebra $(C,N_{C})$ contains $K$ as a subalgebra. If we take $K$ to be the split quadratic algebra $k\times k$ over $k$, we can have the same construction for the split octonion algebra with $(V,h)$ as a non-degenerate $K/k$-hermitian space of rank $3$. We refer to (\cite{JN}) and (\cite{TML}) for more details.

\vspace{3 mm}

\noindent \textbf{Freudenthal algebras:} Let us assume $\text{char}(k)\neq 2,3$ for the rest of this section. Let $C$ be any composition algebra over the field $k$ and $x\mapsto \overline{x}$ for $ x\in C$, be the conjugation involution on $C$. Let $M_{3}(C)$ be the matrix algebra of $3\times 3$ matrices with entries in $C$ and $\Gamma = \text{diag}(\gamma_{1},\gamma_{2},\gamma_{3})\in GL_{3}(k)$. We consider the involution on $M_{3}(C)$ defined by $X^{*\Gamma}:=\Gamma^{-1}\overline{X}^{t}\Gamma$ for $ X\in M_{3}(C)$, where $\overline{X}=(\overline{x_{ij}})$ if $X=(x_{ij})$. Let $\mathcal{H}_{3}(C,\Gamma)=\{X\in M_{3}(C):X^{*\Gamma}=X\}\subset M_{3}(C)$ and we define a multiplication on this subspace by $$X.Y=\frac{1}{2}(XY+YX),\text{ for } X,Y\in \mathcal{H}_{3}(C,\Gamma),$$ where $XY$ denotes the usual matrix multiplication. The algebras $\mathcal{H}_{3}(C,\Gamma)$ with respect to the above multiplication are called \textit{reduced Freudenthal algebras}. These algebras contain zero divisors and $dim(\mathcal{H}_{3}(C,\Gamma))=3(\text{dim}(C)+1)$. Let $\mathcal{A}$ be an algebra over $k$ such that for some field extension $K/k$, $\mathcal{A}\otimes K$ is isomorphic to a reduced Freudenthal algebra defined over $K$. Then $\mathcal{A}$ is called a \textit{Freudenthal algebra}. Freudenthal algebras are cubic Jordan algebras and every element $a$ in a Freudenthal algebra $\mathcal{A}$ satisfies a cubic monic polynomial over $k$, of the form $P(x)=x^{3}-T_{\mathcal{A}}(a)x^{2}+S_{\mathcal{A}}(a)x-N_{\mathcal{A}}(a)$ (see \cite{KMRT}, Chapter IX). Here, $a\mapsto T_{\mathcal{A}}(a)$ gives us a linear map $T_{\mathcal{A}}:\mathcal{A}\rightarrow k$, called the \textit{trace map}. We can define a bilinear map $T:\mathcal{A}\times \mathcal{A}\rightarrow k$ by $T(a,b)=T_{\mathcal{A}}(a.b),$ for $a,b\in \mathcal{A}$; which is called the \textit{bilinear trace form}. If we let $a^{\#} :=a^{2}-T_{\mathcal{A}}(a)a+S_{\mathcal{A}}(a)$, then $a\mapsto a^{\#}$ defines a quadratic map $\#:\mathcal{A}\rightarrow \mathcal{A}$ satisfying $aa^{\#}=N_{\mathcal{A}}(a), \forall a\in \mathcal{A}$. This map is called the \textit{Freudenthal adjoint} and let $\times : \mathcal{A}\times \mathcal{A}\rightarrow \mathcal{A}$ be the associated bilinear map defined by $a\times b=(a+b)^{\#}-a^{\#}-b^{\#},$ for $ a,b\in \mathcal{A}$; which is called the \textit{Freudenthal cross product}. The scalar $N_{\mathcal{A}}(a)$ is called the \textit{generic norm} of $a\in \mathcal{A}$. For more details on these algebras, we refer the reader to (\cite{GPR}) and (\cite{KMRT}).

\vspace{3 mm}

\noindent \textbf{Exact sequence of cohomology sets:} Let $G$ be an algebraic group defined over the field $k$ and $H\subset G$ be a subgroup of $G$. Then we have the following Proposition and corollary from (\cite{SJP}, Chapter I, Section $5$, p. 50, Proposition $36$, Corollary $1$).

\begin{proposition}\label{ESC}
    The sequence of cohomology sets \[1 \rightarrow H^{0}(k,H)\rightarrow H^{0}(k,G)\rightarrow H^{0}(k,G/H)\rightarrow H^{1}(k,H)\rightarrow H^{1}(k,G), \] is exact.
\end{proposition}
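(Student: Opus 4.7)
The plan is to construct the connecting map $\delta : H^{0}(k, G/H) \to H^{1}(k, H)$ explicitly and then check exactness separately at each of the four intermediate terms. Throughout, I set $\Gamma = \text{Gal}(\bar{k}/k)$ and use the identifications $H^{0}(k, X) = X(\bar{k})^{\Gamma}$ for any $\Gamma$-set $X$, while $H^{1}(k, H)$ denotes the pointed set of continuous $1$-cocycles $\sigma \mapsto a_{\sigma}$ from $\Gamma$ to $H(\bar{k})$, modulo cohomology.

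First I would construct $\delta$. Given $\bar{x} \in (G/H)(\bar{k})^{\Gamma}$, pick a lift $x \in G(\bar{k})$; this requires the surjectivity of $G(\bar{k}) \to (G/H)(\bar{k})$, which is the one non-formal input. For each $\sigma \in \Gamma$, the images of $x$ and $\sigma(x)$ in $G/H$ coincide, so $a_{\sigma} := x^{-1}\sigma(x) \in H(\bar{k})$. A direct expansion gives the cocycle identity $a_{\sigma\tau} = a_{\sigma}\,\sigma(a_{\tau})$, and replacing $x$ by another lift $xh$ with $h \in H(\bar{k})$ changes the cocycle by the coboundary $\sigma \mapsto h^{-1} a_{\sigma}\,\sigma(h)$, so the class $\delta(\bar{x}) := [a_{\sigma}]$ is well-defined.

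Next, I would check exactness at each of the four intermediate terms. Injectivity of $H^{0}(k, H) \to H^{0}(k, G)$ is immediate from the inclusion $H(\bar{k}) \hookrightarrow G(\bar{k})$. Exactness at $H^{0}(k, G)$ is the tautology that a $\Gamma$-fixed $g \in G(\bar{k})$ maps to the neutral coset in $G/H$ iff $g \in H(\bar{k})$. For exactness at $H^{0}(k, G/H)$: the class $\delta(\bar{x})$ is trivial iff $a_{\sigma} = h^{-1}\sigma(h)$ for some $h \in H(\bar{k})$, iff the modified lift $xh$ is $\Gamma$-fixed, iff $\bar{x}$ lies in the image of $H^{0}(k, G) \to H^{0}(k, G/H)$. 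Finally, for exactness at $H^{1}(k, H)$: a class $[a_{\sigma}]$ becomes trivial in $H^{1}(k, G)$ precisely when $a_{\sigma} = x^{-1}\sigma(x)$ for some $x \in G(\bar{k})$, and then the image $\bar{x} \in (G/H)(\bar{k})$ is automatically $\Gamma$-fixed with $\delta(\bar{x}) = [a_{\sigma}]$, while conversely any class of the form $\delta(\bar{x})$ is by construction a coboundary when pushed into $G$.

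The main obstacle is conceptual rather than computational: because $H$ need not be abelian, $\delta$ is only a morphism of pointed sets, so exactness is interpreted fiberwise over the distinguished point and each containment of each exactness statement must be checked by hand. Apart from this, the only ingredient that is not a formal diagram chase is the surjectivity of $G(\bar{k}) \to (G/H)(\bar{k})$, which in the present generality is part of the standing convention on how the quotient $G/H$ is formed (smoothness of $H$ and working in the \'etale topology suffices), after which the proof reduces to the routine verifications sketched above.
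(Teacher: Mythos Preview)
Your proof is correct and is essentially the standard argument. Note, however, that the paper does not supply its own proof of this proposition: it is simply quoted from Serre's \emph{Galois Cohomology} (Chapter~I, \S5, Proposition~36), so there is no ``paper's proof'' to compare against beyond the reference itself. Your write-up reproduces exactly the construction and verifications one finds there.
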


\begin{corollary}\label{ESCC}
    The kernel of $H^{1}(k,H)\rightarrow H^{1}(k, G)$ may be identified with the quotient space of $H^{0}(k,G/H)$ by the action of the group $H^{0}(k,G)=G_{k}$.
\end{corollary}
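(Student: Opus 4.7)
The plan is to read off the corollary directly from the exact sequence in Proposition \ref{ESC}, combined with an explicit description of the connecting map $\delta : H^{0}(k,G/H)\to H^{1}(k,H)$. First, exactness at $H^{1}(k,H)$ immediately gives $\ker\bigl(H^{1}(k,H)\to H^{1}(k,G)\bigr)=\mathrm{Im}(\delta)$, so it remains to identify $\mathrm{Im}(\delta)$ with the orbit space $H^{0}(k,G/H)/G_{k}$. The natural candidate map is the composition $H^{0}(k,G/H)\twoheadrightarrow H^{0}(k,G/H)/G_{k}\to \mathrm{Im}(\delta)$ induced by $\delta$, so everything reduces to showing that $\delta$ is $G_{k}$-invariant and that its fibers are exactly the $G_{k}$-orbits.

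Concretely, I would use the standard description of $\delta$: given $x\in H^{0}(k,G/H)=(G/H)(\bar k)^{\Gamma_{k}}$, lift $x$ to some $\tilde x\in G(\bar k)$, and set $\delta(x)=[\sigma\mapsto \tilde x^{-1}\sigma(\tilde x)]\in H^{1}(k,H)$ (the cocycle takes values in $H$ because $\sigma(x)=x$ in $G/H$). For the $G_{k}$-invariance, if $g\in G_{k}$, then $g\tilde x$ is a lift of $g\cdot x$, and $(g\tilde x)^{-1}\sigma(g\tilde x)=\tilde x^{-1}g^{-1}\sigma(g)\sigma(\tilde x)=\tilde x^{-1}\sigma(\tilde x)$, so $\delta(g\cdot x)=\delta(x)$. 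This shows the induced map from the orbit space is well-defined and surjective onto $\mathrm{Im}(\delta)$.

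The injectivity step, which I expect to be the main (though mild) obstacle, is where the non-abelian cocycle bookkeeping has to be done carefully. Suppose $\delta(x)=\delta(y)$ with lifts $\tilde x,\tilde y\in G(\bar k)$. Then the cocycles $c_{x}(\sigma)=\tilde x^{-1}\sigma(\tilde x)$ and $c_{y}(\sigma)=\tilde y^{-1}\sigma(\tilde y)$ are cohomologous in $H$, so there is $h\in H(\bar k)$ with
\[
\tilde x^{-1}\sigma(\tilde x)=h^{-1}\,\tilde y^{-1}\sigma(\tilde y)\,\sigma(h)\qquad(\sigma\in \Gamma_{k}).
\]
Setting $g:=\tilde y\,h\,\tilde x^{-1}\in G(\bar k)$, the identity above rearranges to $\sigma(g)=g$ for every $\sigma$, i.e.\ $g\in G_{k}$. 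Since $\tilde y h$ and $\tilde y$ have the same image in $G/H$, we get $g\cdot x=y$ in $H^{0}(k,G/H)$, which is the desired conclusion.

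Thus the assignment $x\mapsto \delta(x)$ factors through an injection $H^{0}(k,G/H)/G_{k}\hookrightarrow H^{1}(k,H)$ whose image is exactly the kernel of $H^{1}(k,H)\to H^{1}(k,G)$, proving the corollary. The only subtle point is keeping track of the twisted action in the cohomologous-cocycle relation; aside from this, every ingredient is a formal consequence of Proposition \ref{ESC} and the definition of the connecting map.
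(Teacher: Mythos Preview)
Your argument is correct and is precisely the standard proof (as in Serre, \cite{SJP}, Chapter I, \S5, Proposition 36 and its Corollary 1); the paper itself does not give a proof but simply cites this result from Serre. Your cocycle computation for injectivity is clean and accurate, so there is nothing to add.
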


We will see later that these two results help us to get cohomological interpretations for the orbit spaces in many representations. For more details on Galois cohomology, we refer the reader to (\cite{SJP}) and (\cite{KMRT}, Chapter VII).

\section{The symplectic group and an irreducible representation}\label{3}

In this section, we discuss $Sp_{6}$, the symplectic group of order $6$ defined over the field $k$, and construct the $14$-dimensional irreducible representation $X$ of $Sp_{6}$, which we identify as a vector space with the structurable algebra $Z$, mentioned in the Introduction (see Section \ref{I}). From now on, we write $G$ for the group $Sp_{6}$. We explain the action of $G$ on $X$ and describe the quartic $G$-invariant defined on $X$, which generates the ring of invariants.
\vspace{ 1 mm}

Let $V$ be the $6$-dimensional vector space which we write as \[V= \left(\begin{array}{cc}
0 & V_{3} \\
V_{3} & 0
\end{array}\right),\] the off-diagonal subspace of the Zorn algebra and $V_{3}$ is the $3$-dimensional vector space $k^{3}$.

\vspace{2 mm}

Let $\mathcal{B}=\{e_{1},e_{2},..,e_{6}\}$ be the basis of $V$, where

 \begin{center}
     $e_{i}=\left(\begin{array}{cc}
0 & f_{i} \\
0 & 0
\end{array}\right)$, $e_{i+3}=\left(\begin{array}{cc}
0 & 0 \\
f_{i} & 0
\end{array}\right)$, for $i=1,2,3$
 \end{center}
 
\noindent and $\{f_{1},f_{2},f_{3}\}$ is the standard basis of $V_{3}$.

\vspace{3 mm}

\noindent \textbf{The symplectic group:} Up to isometry $\exists$ a unique non-degenerate alternating bilinear form on $V$, say $Q$, which is represented by the alternating matrix \[ M_{Q}= \left(\begin{array}{cc}
0 & I_{3} \\
-I_{3} & 0
\end{array}\right), \] with respect to the basis $\mathcal{B}$. Then the symplectic group $Sp_{6}$ is the group $Sp_{6}=\{g\in GL_{6}: gM_{Q}g^{t}=M_{Q}\}$. It follows that the group $G=Sp_{6}$ can be described as \[G=\biggl\{ \left(\begin{array}{cc}
\alpha & \beta \\
\gamma & \delta
\end{array}\right) \in GL_{6}: \alpha,\beta,\gamma,\delta \text{ are $3\times 3$ matrices; }\alpha\beta^{t}=\beta\alpha^{t}, \gamma\delta^{t}=\delta\gamma^{t}, \alpha\delta^{t}-\beta\gamma^{t}=I_{3} \biggl\}.\]

\vspace{1 mm}

\subsection{The irreducible representation} Consider the $6$-dimensional vector space $V$ as above. We know that $SL_{3}\subset \text{Aut}(Zorn(k))$ (the group of all algebra automorphisms) acts on the Zorn algebra by stabilizing the off-diagonal subspace $V$ and the restriction of this action to $V$ is given by \[A. \left(\begin{array}{cc}
0 & u \\
v & 0
\end{array}\right) = \left(\begin{array}{cc}
0 & Au \\
(A^{t})^{-1}v & 0
\end{array}\right),\text{ for } A\in SL_{3} \text{ and } u,v\in V_{3},\] (see \cite{JN}, Theorem $3,4$ for details). The $G$-action on $V$ is given by

    \[g. \left(\begin{array}{cc}
0 & u \\
v & 0
\end{array}\right) = \left(\begin{array}{cc}
0 & \alpha u+\beta v \\
\gamma u + \delta v & 0
\end{array}\right), \text{ for } g =\left(\begin{array}{cc}
\alpha & \beta \\
\gamma & \delta
\end{array}\right)\in G \text{ and } u,v \in V_{3};\]

\noindent which is induced by the $GL(V)$-action on $V$, as $G=Sp_{6}\subset GL(V)$.

\vspace{2 mm}

Again, we have the following subgroup of $G$, which is isomorphic to $SL_{3}$. $$SL_{3}\simeq \biggl\{ \left(\begin{array}{cc}
A & 0 \\
0 & (A^{t})^{-1}
\end{array}\right): A\in SL_{3} \biggl\}\subset G$$ If we take the restriction of the $G$-action (defined above) to this subgroup $SL_{3}\subset G$, we get an identification of the algebra automorphisms of $Zorn(k)$ contained in $SL_{3}$ (which stabilizes $V\subset Zorn(k)$) as a subgroup inside $G\subset GL(V)$.

\vspace{2 mm}

Let us consider the alternating trilinear form $T$ on $V$ defined by \[T\biggl(\left(\begin{array}{cc}
0 & u_{1} \\
v_{1} & 0
\end{array}\right),\left(\begin{array}{cc}
0 & u_{2} \\
v_{2} & 0
\end{array}\right),\left(\begin{array}{cc}
0 & u_{3} \\
v_{3} & 0
\end{array}\right)\biggl)=(u_{1}\times u_{2})^{t}u_{3},\] where $u_{i},v_{i}\in V_{3},\text{ for } i=1,2,3$ and `$\times$' denotes the usual cross product on $V_{3}$.

\begin{proposition}
    The alternating trilinear form $T$ is $SL_{3}$-invariant.
\end{proposition}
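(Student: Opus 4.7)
The plan is to reduce the claim to the well-known identity that for any $A\in GL_3$ and $u_1,u_2\in V_3=k^3$,
$$(Au_1)\times (Au_2)=\det(A)\,(A^t)^{-1}(u_1\times u_2),$$
which I would justify by noting that for any $u_1,u_2,u_3\in V_3$ one has $(u_1\times u_2)^t u_3=\det[u_1\mid u_2\mid u_3]$, so applying $A$ to each column gives
$$((Au_1)\times (Au_2))^t(Au_3)=\det[Au_1\mid Au_2\mid Au_3]=\det(A)\det[u_1\mid u_2\mid u_3]=\det(A)(u_1\times u_2)^t u_3,$$
and since $u_3$ was arbitrary, the displayed vector identity follows.

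Now let $g=\bigl(\begin{smallmatrix}A & 0\\ 0 & (A^t)^{-1}\end{smallmatrix}\bigr)$ with $A\in SL_3$, so $\det(A)=1$. By the description of the $G$-action on $V$, the action of $g$ sends $\bigl(\begin{smallmatrix}0 & u_i\\ v_i & 0\end{smallmatrix}\bigr)$ to $\bigl(\begin{smallmatrix}0 & Au_i\\ (A^t)^{-1}v_i & 0\end{smallmatrix}\bigr)$. The proof is then a direct computation:
$$T\Bigl(g.\!\bigl(\begin{smallmatrix}0 & u_1\\ v_1 & 0\end{smallmatrix}\bigr),\,g.\!\bigl(\begin{smallmatrix}0 & u_2\\ v_2 & 0\end{smallmatrix}\bigr),\,g.\!\bigl(\begin{smallmatrix}0 & u_3\\ v_3 & 0\end{smallmatrix}\bigr)\Bigr)=((Au_1)\times (Au_2))^t(Au_3)=\det(A)\,(u_1\times u_2)^t u_3=(u_1\times u_2)^t u_3,$$
which is $T\bigl(\bigl(\begin{smallmatrix}0 & u_1\\ v_1 & 0\end{smallmatrix}\bigr),\bigl(\begin{smallmatrix}0 & u_2\\ v_2 & 0\end{smallmatrix}\bigr),\bigl(\begin{smallmatrix}0 & u_3\\ v_3 & 0\end{smallmatrix}\bigr)\bigr)$, proving the invariance.

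There is essentially no obstacle: the only substantive input is the cross-product transformation formula, which is immediate from the determinantal description of $(u_1\times u_2)^t u_3$. The fact that the $v_i$'s play no role in $T$ means the block structure of the $SL_3$ embedding chosen in $G$ is irrelevant beyond its action on the upper off-diagonal slot, and $\det(A)=1$ is exactly what is needed to cancel the determinantal factor produced by the column transformation.
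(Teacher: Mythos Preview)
Your proof is correct and reaches the same conclusion via the same core identity, namely that $(Au_1\times Au_2)^t(Au_3)=(u_1\times u_2)^t u_3$ for $A\in SL_3$. The difference is in how this identity is justified: you derive it directly from the determinantal formula $(u_1\times u_2)^t u_3=\det[u_1\mid u_2\mid u_3]$ and the multiplicativity of the determinant, whereas the paper extracts the vector identity $Au_1\times Au_2=(A^t)^{-1}(u_1\times u_2)$ from the fact that $SL_3$ acts on $Zorn(k)$ by algebra automorphisms fixing the diagonal, so that applying $A$ to a product of off-diagonal elements and comparing entries forces this relation. Your route is more elementary and self-contained; the paper's route is more in keeping with its theme of building the representation out of the octonion structure, and it explains \emph{why} the particular $SL_3$-action on the $v$-slot is the right one (it is the automorphism action), even though, as you observe, the $v_i$'s play no role in $T$ itself.
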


\begin{proof}

We have mentioned earlier that $SL_{3}$ is the subgroup of the group of all algebra automorphisms of $Zorn(k)$, which maps the off-diagonal subspace $V$ to itself. Let us consider \[\left(\begin{array}{cc}
0 & u_{1} \\
v_{1} & 0
\end{array}\right),\left(\begin{array}{cc}
0 & u_{2} \\
v_{2} & 0
\end{array}\right) \in V \text{ and } A\in SL_{3}.\] Since $A$ is an algebra automorphism of $Zorn(k)$ and it fixes the diagonal elements (see \cite{JN}, Theorem $4$), we have \[A.\biggl(\left(\begin{array}{cc}
0 & u_{1} \\
v_{1} & 0
\end{array}\right).\left(\begin{array}{cc}
0 & u_{2} \\
v_{2} & 0
\end{array}\right)\biggl)=\left(\begin{array}{cc}
0 & Au_{1} \\
(A^{t})^{-1}v_{1} & 0
\end{array}\right).\left(\begin{array}{cc}
0 & Au_{2} \\
(A^{t})^{-1}v_{2} & 0
\end{array}\right).\] Comparing both sides, we get $$Au_{1}\times Au_{2}=(A^{t})^{-1}(u_{1}\times u_{2}) \text{ and } (Au_{1})^{t}((A^{t})^{-1}v_{2})=u^{t}_{1}v_{2},$$ where $u_{1},u_{2},v_{1},v_{2}\in V_{3}$ and $A\in SL_{3}$ are arbitrary. Using the above relations we have $$T(A.U_{1},A.U_{2},A.U_{3})=T(U_{1},U_{2},U_{3}),$$ where $A\in SL_{3}$ and $U_{r}\in V$ for $r=1,2,3$. So, we get the $SL_{3}$-invariance of $T$.

\end{proof}

The space of all alternating trilinear forms on $V$ can be identified with $\wedge^{3}V^{\ast}\simeq \wedge^{3}V$ using the standard inner product on $V$ with respect to the basis $\mathcal{B}=\{e_{1},e_{2},.., e_{6}\}$, where $V^{\ast}$ denotes the dual of $V$. By computing the alternating trilinear form given by $e_{1}^{\ast}\wedge e_{2}^{\ast}\wedge e_{3}^{\ast}$ on $V$, we can see that $T=e_{1}^{\ast}\wedge e_{2}^{\ast}\wedge e_{3}^{\ast}$. So, the element in $\wedge^{3}V$, corresponding to $T$, is given by $e_{1}\wedge e_{2}\wedge e_{3}$. Now we consider the $G$-action on $\wedge^{3}V$ induced by the $G$-action on $V$. Then $e_{1}\wedge e_{2}\wedge e_{3}$ is fixed by the above copy of $SL_{3}\subset G$, which contains algebra automorphisms of $Zorn(k)$ stabilizing the off-diagonal subspace $V$. Let $X=\text{span}\{G.(e_{1}\wedge e_{2}\wedge e_{3})\}$. From now on, we will write $e_{i}e_{j}e_{l}$ for $e_{i}\wedge e_{j}\wedge e_{l}$.

\begin{proposition}
    The vector space $X$ is an irreducible representation of $G$.
\end{proposition}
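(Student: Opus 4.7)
The plan is to realize $X$ as the kernel of a canonical $G$-equivariant contraction on $\wedge^{3}V$ built from the symplectic form $Q$, and then to identify it with the irreducible highest-weight representation $V(\omega_{3})$ of $G=Sp_{6}$.

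First I would define the $G$-equivariant contraction $\phi:\wedge^{3}V\rightarrow V$ by
$$\phi(u\wedge v\wedge w)=Q(u,v)w+Q(v,w)u+Q(w,u)v.$$
(A short check confirms this is alternating in $u,v,w$, so it descends to $\wedge^{3}V$.) Because $Q$ is $G$-invariant, $\phi$ is a map of $G$-modules, so $W:=\ker\phi$ is a $G$-subrepresentation of $\wedge^{3}V$. From the shape of $M_{Q}$, the subspace $L=\text{span}\{e_{1},e_{2},e_{3}\}$ is Lagrangian in $V$, so $Q(e_{i},e_{j})=0$ for all $i,j\in\{1,2,3\}$; hence $\phi(e_{1}\wedge e_{2}\wedge e_{3})=0$ and $X\subseteq W$. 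Using $Q(e_{i},e_{j+3})=\delta_{ij}$, one checks $\phi$ is surjective (for instance, $\phi(e_{j}\wedge e_{1}\wedge e_{4})=\pm e_{j}$ for $j\in\{2,3\}$, and similar identities recover every basis vector of $V$), so $\text{dim}(W)=20-6=14$.

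Next I would identify $e_{1}\wedge e_{2}\wedge e_{3}$ as a highest weight vector. With respect to the split maximal torus $T\subset G$ of diagonal matrices $\text{diag}(t_{1},t_{2},t_{3},t_{1}^{-1},t_{2}^{-1},t_{3}^{-1})$, the weights on $V$ are $\pm\varepsilon_{i}$, and $e_{1}\wedge e_{2}\wedge e_{3}$ has weight $\varepsilon_{1}+\varepsilon_{2}+\varepsilon_{3}=\omega_{3}$, the third fundamental weight of $Sp_{6}$. For the Borel $B\subset G$ of upper triangular symplectic matrices, a direct check on each of the nine positive root subgroups of $C_{3}$ (namely $\varepsilon_{i}-\varepsilon_{j}$ and $\varepsilon_{i}+\varepsilon_{j}$ for $i<j$, and $2\varepsilon_{i}$) shows they all annihilate $e_{1}\wedge e_{2}\wedge e_{3}$: each positive root vector either maps some $e_{i}$ with $i\in\{1,2,3\}$ to another $e_{j}$ already present in the wedge, or else has the property that its only nonzero values occur on vectors $e_{l+3}$ not appearing in the wedge. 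Hence $X$ is the cyclic $G$-module generated by a highest weight vector of dominant weight $\omega_{3}$, so $X$ surjects onto the irreducible module $V(\omega_{3})$; Weyl's dimension formula for $C_{3}$ gives $\text{dim}(V(\omega_{3}))=14$, and combined with $X\subseteq W$ with $\text{dim}(W)=14$, this forces $X=W=V(\omega_{3})$, irreducible.

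The main obstacle is justifying the irreducibility of $V(\omega_{3})$ over an arbitrary $k$ with $\text{char}(k)\neq 2$, rather than only over $\overline{k}$. Since $Sp_{6}$ is split and semisimple over $k$, one may reduce to the algebraic closure: $X_{\overline{k}}=\overline{k}\otimes_{k}X$ is a $G_{\overline{k}}$-submodule of $\wedge^{3}V_{\overline{k}}$ of dimension $14$, which must coincide with the well-known irreducible fundamental representation $\wedge^{3}_{0}V_{\overline{k}}$ of $Sp_{6}$ (this is classical in characteristic $\neq 2$). Irreducibility over $k$ then follows by Galois descent: any proper nonzero $G$-stable $k$-subspace of $X$ would base-change to a proper nonzero $G_{\overline{k}}$-stable subspace of $X_{\overline{k}}$, a contradiction.
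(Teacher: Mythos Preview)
Your proof is correct and uses the same core idea as the paper: identify $e_{1}\wedge e_{2}\wedge e_{3}$ as a highest weight vector for the maximal torus of $G=Sp_{6}$, whence the cyclic module it generates is irreducible. The paper's argument is terser---it simply asserts that $e_{1}e_{2}e_{3}$ is a highest weight vector and cites references---while you spell out the torus action, the positive-root annihilation, and the characteristic issue.

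Where you genuinely diverge from the paper is that you simultaneously establish $X=\wedge^{3}_{0}V$ and $\dim X=14$, which the paper defers to the \emph{next} proposition and proves by a different method: there the author writes down an explicit fourteen-element basis $\mathcal{B}_{1}\sqcup\mathcal{B}_{2}$ of $\ker\phi$ and checks by hand, using concrete symplectic matrices, that each basis vector lies in $\text{span}\{G.(e_{1}e_{2}e_{3})\}$. Your route via the surjection $X\twoheadrightarrow L(\omega_{3})$ together with the sandwich $L(\omega_{3})\hookleftarrow X\subseteq W$ and the dimension count is cleaner and conceptually uniform, but it leans on the fact that $\dim L(\omega_{3})=14$ in characteristic $\neq 2$. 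You correctly flag that Weyl's formula alone only gives the Weyl-module dimension, and your final paragraph handles this by appeal to the classical irreducibility of $\wedge^{3}_{0}V$ over $\overline{k}$; the paper's explicit-basis computation sidesteps that appeal entirely, which is its one advantage.
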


\begin{proof}
    We have $X=\text{span}\{G.(e_{1}e_{2}e_{3})\}\subset \wedge^{3}V$. Let us consider the subgroup of $G$ consisting of the elements given by $\beta=\gamma=0$ and $\alpha\in GL_{3}$ is any diagonal matrix. This subgroup is the maximal torus of $G$ with rank $3$. If an element $g\in G$ is given by $\alpha=\text{diag}(t_{1},t_{2},t_{3}) $ for $ t_{1},t_{2},t_{3}\in GL_{1} $ and $ \beta=\gamma=0$, then $g$ is in the maximal torus and $g.(e_{1}e_{2}e_{3})=t_{1}t_{2}t_{3}(e_{1}e_{2}e_{3})$. It is straightforward to check that $e_{1}e_{2}e_{3}$ is a highest weight vector, and hence $X$ is an irreducible representation of $G$ (see \cite{BGL} Section $1$; \cite{HJE}, Chapter XI, Section $31$). 
    
\end{proof}

\begin{remark}
We can start with any $SL_{3}$-invariant alternating trilinear form on the off-diagonal subspace $V\subset Zorn(k)$, instead of $T$ and proceed similarly to get the same $X$.
\end{remark}

\vspace{1 mm}

\noindent\textbf{Alternative description of $X$:} We have a natural contraction map $\phi : \wedge^{3}V\rightarrow V$ defined by $\phi (v_{1}\wedge v_{2}\wedge v_{3})=Q(v_{2},v_{3})v_{1}-Q(v_{1},v_{3})v_{2}+Q(v_{1},v_{2})v_{3}$, where $Q$ is the standard non-degenerate alternating bilinear form defined on $V$ (see \cite{FH}, Lecture $17$). Then $\phi$ is surjective. We denote the subspace $\text{ker}(\phi)\subset \wedge^{3}V$ by $\wedge_{0}^{3}V$ or $\wedge^{3}_{0}V_{6}$ (as $\text{dim}(V)=6$), the space mentioned in the Introduction (Table \ref{1}).

\begin{proposition}
    $\wedge^{3}_{0}V$ is the irreducible $Sp_{6}$-representation $X$. So, $X$ has dimension $14$.
\end{proposition}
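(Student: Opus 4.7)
The plan is to prove $X=\wedge^{3}_{0}V$ by first establishing the containment $X\subseteq \wedge^{3}_{0}V$ and then matching dimensions on both sides.

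First I would verify that the contraction $\phi$ is $Sp_{6}$-equivariant, which is immediate from the $Sp_{6}$-invariance of $Q$: for any $g\in Sp_{6}$ and $v_{1},v_{2},v_{3}\in V$, the defining formula together with $Q(gv_{i},gv_{j})=Q(v_{i},v_{j})$ gives $\phi(gv_{1}\wedge gv_{2}\wedge gv_{3})=g\cdot \phi(v_{1}\wedge v_{2}\wedge v_{3})$. Hence $\wedge^{3}_{0}V=\ker \phi$ is an $Sp_{6}$-subrepresentation of $\wedge^{3}V$. Reading off the matrix $M_{Q}$, one has $Q(e_{i},e_{j})=0$ for all $i,j\in \{1,2,3\}$, so $\phi(e_{1}\wedge e_{2}\wedge e_{3})=Q(e_{2},e_{3})e_{1}-Q(e_{1},e_{3})e_{2}+Q(e_{1},e_{2})e_{3}=0$. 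Thus $e_{1}\wedge e_{2}\wedge e_{3}\in \wedge^{3}_{0}V$, and by $G$-invariance the entire span $X=\text{span}\{G\cdot (e_{1}\wedge e_{2}\wedge e_{3})\}$ is contained in $\wedge^{3}_{0}V$.

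Since $\phi$ is surjective, $\dim \wedge^{3}_{0}V=\binom{6}{3}-6=14$. By the preceding proposition, $X$ is irreducible, and $e_{1}\wedge e_{2}\wedge e_{3}$ is a highest weight vector of weight $\epsilon_{1}+\epsilon_{2}+\epsilon_{3}=\omega_{3}$, the third fundamental weight of $Sp_{6}$ with respect to the diagonal torus identified in the previous argument. The fundamental representation $V(\omega_{3})$ of $Sp_{6}$ has dimension $14$, as one checks by Weyl's dimension formula. Hence $\dim X=14=\dim \wedge^{3}_{0}V$, and the inclusion $X\subseteq \wedge^{3}_{0}V$ must be an equality.

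The only step going beyond direct symplectic computation is the dimension count $\dim V(\omega_{3})=14$. To avoid invoking Weyl's formula, one can instead exhibit the $Sp_{6}$-equivariant splitting $s:V\rightarrow \wedge^{3}V$ defined by $s(v)=v\wedge \omega$, where $\omega=\sum_{i=1}^{3}e_{i}\wedge e_{i+3}\in \wedge^{2}V$ is the $Sp_{6}$-invariant bivector dual to $Q$, and check that $\phi\circ s$ is a nonzero scalar multiple of $\mathrm{id}_{V}$. This yields a $G$-equivariant direct sum decomposition $\wedge^{3}V=\wedge^{3}_{0}V\oplus s(V)$ into two submodules with distinct highest weights, after which a weight-multiplicity count on $\wedge^{3}_{0}V$ (the eight extremal weights $\pm\epsilon_{1}\pm\epsilon_{2}\pm\epsilon_{3}$ together with the six weights $\pm\epsilon_{i}$, each of multiplicity one) recovers $\dim \wedge^{3}_{0}V=14$ combinatorially and pins down $X=\wedge^{3}_{0}V$ via the irreducibility of $X$.
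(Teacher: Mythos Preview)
Your proof is correct (certainly over fields of characteristic zero) but takes a genuinely different route from the paper. Both arguments begin the same way: check $e_{1}e_{2}e_{3}\in\ker\phi$, observe that $\ker\phi$ is a $G$-submodule, conclude $X\subseteq\wedge^{3}_{0}V$, and compute $\dim\wedge^{3}_{0}V=20-6=14$. They diverge on the reverse inclusion. The paper writes down an explicit fourteen-element basis $\mathcal{B}_{1}\sqcup\mathcal{B}_{2}$ of $\wedge^{3}_{0}V$ and, for each basis vector, produces concrete elements of $Sp_{6}(k)$ (built from diagonal symplectic matrices and unipotent matrices with a symmetric off-diagonal block) that carry $e_{1}e_{2}e_{3}$ to that vector or to a linear combination containing it; this shows directly that every basis vector lies in $\mathrm{span}\{G\cdot e_{1}e_{2}e_{3}\}=X$. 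You instead identify $X$ with the simple highest-weight module $L(\omega_{3})$ and invoke $\dim L(\omega_{3})=14$ to force the inclusion to be an equality. Your argument is shorter and more conceptual; the paper's explicit computation has the advantage of being visibly valid in any characteristic $\neq 2$, which is the paper's standing hypothesis, whereas Weyl's dimension formula for \emph{simple} modules is a characteristic-zero statement and can fail in small positive characteristic.

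One small gap in your alternative paragraph: the splitting $s(v)=v\wedge\omega$ and the weight-multiplicity count only re-establish $\dim\wedge^{3}_{0}V=14$, which you already had from surjectivity of $\phi$. They do not by themselves give $\dim X=14$, so they do not actually replace the appeal to Weyl's formula; to close that route you would still need to argue that each of the six short weights $\pm\epsilon_{i}$ occurs in $X$, which in the end amounts to something like the paper's explicit orbit computation.
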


\begin{proof}
    The non-degenerate alternating bilinear form $Q$ on $V$ is given by the alternating matrix \[ M_{Q}= \left(\begin{array}{cc}
0 & I_{3} \\
-I_{3} & 0
\end{array}\right), \] with respect to the basis $\mathcal{B}=\{e_{1},e_{2},..,e_{6}\}$. Then $\{e_{i}e_{j}e_{l}:1\leq i<j<l\leq 6\}$ is a basis of $\wedge^{3}V$, and let us denote the coordinates of any point $x\in \wedge^{3}V$ with respect to the above basis by $x_{ijl}$. We can easily check that $$\wedge^{3}_{0}V=\text{ker}(\phi)=\{x\in \wedge^{3}V:x_{r14}+x_{r25}+x_{r36}=0, r=1,2,..,6\}$$ and $e_{1}e_{2}e_{3}\in \text{ker}(\phi)=\wedge^{3}_{0}V$. So, we have $X=\text{span}\{G.(e_{1}e_{2}e_{3})\}\subset \wedge^{3}_{0}V$, since $\wedge^{3}_{0}V$ is invariant under the action of $G$. As $\phi$ is surjective, we have $\text{dim}(\wedge^{3}_{0}V)=20-6=14$ and we choose a basis $\mathcal{B}_{1}\sqcup \mathcal{B}_{2}$ (the disjoint union) for $\wedge^{3}_{0}V$, where $$ \mathcal{B}_{1}= \{e_{1}e_{2}e_{3},e_{1}e_{2}e_{6}, e_{1}e_{3}e_{5},e_{1}e_{5}e_{6},e_{2}e_{3}e_{4},e_{2}e_{4}e_{6},e_{3}e_{4}e_{5},e_{4}e_{5}e_{6}\},$$ $$ \mathcal{B}_{2}= \{e_{r_{1}}e_{1}e_{4}-e_{r_{1}}e_{3}e_{6}, e_{r_{2}}e_{2}e_{5}-e_{r_{2}}e_{3}e_{6}, e_{r_{3}}e_{1}e_{4}-e_{r_{3}}e_{2}e_{5}:r_{1}=2,5; r_{2}=1,4; r_{3}=3,6 \}.$$ 

\vspace{1 mm}
\noindent We can check by direct computations that any element from the above basis belongs to $X$. For example, if we take $e_{2}e_{4}e_{6}\in \mathcal{B}_{1}$, then $g.(e_{1}e_{2}e_{3})=e_{2}e_{4}e_{6}$, where $g\in Sp_{6}$ is given by $\alpha=\text{diag}(0,-1,0)=\delta$ and $\gamma=\text{diag}(1,0,1)=-\beta$. So, $e_{2}e_{4}e_{6}\in X=\text{span}\{G.(e_{1}e_{2}e_{3})\}$. The other elements in $\mathcal{B}_{1}$ can be obtained similarly from $e_{1}e_{2}e_{3}$ by using elements from $G$ with diagonal $\alpha,\beta,\gamma,\delta$. So, $\mathcal{B}_{1}\subset X=\text{span}\{G.(e_{1}e_{2}e_{3})\}$.

\vspace{1 mm}

Now, we consider the elements $g_{1}\in G$ given by $\alpha =\delta =I_{3}$, $\beta =0$ and $$\gamma=\left(\begin{array}{ccc}
0 & a & b \\
a & 0 & c \\
b & c & 0
\end{array}\right),$$ for some scalars $a,b,c$. Using these elements we can show that each basis element from $\mathcal{B}_{2}$ also belongs to $X$. As an example, if we take $b=c=0$ and $a=1$ in the above element $g_{1}$, we get $$e_{3}e_{1}e_{4}-e_{3}e_{2}e_{5}= -e_{1}e_{2}e_{3}+g_{1}.(e_{1}e_{2}e_{3})+g_{2}.(e_{1}e_{2}e_{3}),$$ where $g_{2}\in G$ is given by $\alpha =\text{diag}(0,0,1)=\delta$ and $\beta=\text{diag}(1,1,0)=-\gamma$. So, $e_{3}e_{1}e_{4}-e_{3}e_{2}e_{5}\in X=\text{span}\{G.(e_{1}e_{2}e_{3})\}$, and if $g_{3}\in G$ is given by $\alpha=\delta=0$ and $\beta=-I_{3}=-\gamma$, then $g_{3}.(e_{3}e_{1}e_{4}-e_{3}e_{2}e_{5})=e_{6}e_{1}e_{4}-e_{6}e_{2}e_{5}\in X$. We can show that the remaining four elements from $\mathcal{B}_{2}$ are also in $X$, by taking $b=1$ and $c=1$, respectively. So, we get $\mathcal{B}_{2}\subset X=\text{span}\{G.(e_{1}e_{2}e_{3})\}$.

\vspace{1 mm}

Hence, we have $X=\wedge^{3}_{0}V$ and $\text{dim}(X)=14$.

\end{proof}

\vspace{1 mm}

\noindent\textbf{The identification of $X$ with $Z$:} We have $\wedge^{3}V=\text{span} \{ e_{i} e_{j} e_{l}:1\leq i<j<l\leq 6\}$, a $20$-dimensional vector space. For $x\in \wedge^{3}V$, we denote the coordinates by $x_{ijl}$ with respect to the above basis. We can choose an ordering of the above basis for $\wedge^{3}V$ such that we can identify $X\subset\wedge^{3} V$ in the following way (see \cite{IJ}): $$ X=\{(-x_{0},-y_{0},A,B):x_{0},y_{0} \text{ are scalars and } A,B \text{ are symmetric matrices of order }3\},$$ $$\wedge^{3}V=\{(-x_{0},-y_{0},A,B):x_{0},y_{0} \text{ are scalars and } A,B \text{ are }3\times 3 \text{ matrices}\},$$ where $x_{0}=-x_{123}, y_{0}=-x_{456}$ and the matrices $A,B$ are given by
\begin{center}
    
$A=(a_{ij})=\left(\begin{array}{ccc}
x_{423} & x_{143} & x_{124} \\
x_{523} & x_{153} & x_{125} \\
x_{623} & x_{163} & x_{126}
\end{array}\right)$ and $B=(b_{ij})=\left(\begin{array}{ccc}
x_{156} & x_{416} & x_{451} \\
x_{256} & x_{426} & x_{452} \\
x_{356} & x_{436} & x_{453}
\end{array}\right).$

\end{center}

\vspace{1 mm}

Let $\text{char}(k)\neq 2,3$, and we consider the Freudenthal algebra $\mathcal{H}_{3}(k)=\mathcal{H}_{3}(k,I_{3})$. We have \[Z=\left(\begin{array}{cc}
k & \mathcal{H}_{3}(k) \\
\mathcal{H}_{3}(k) & k
\end{array}\right)\] and we get an algebra structure on $Z$ by defining a multiplication on it by 
\[\left(\begin{array}{cc}
a_{1} & A_{1} \\
B_{1} & b_{1}
\end{array}\right). \left(\begin{array}{cc}
a_{2} & A_{2} \\
B_{2} & b_{2}
\end{array}\right) = \left(\begin{array}{cc}
a_{1} a_{2} + T(A_{1}, B_{2}) & a_{1}A_{2}+b_{2}A_{1}+B_{1}\times B_{2} \\
a_{2}B_{1}+b_{1}B_{2}+A_{1}\times A_{2} & b_{1}b_{2}+T(B_{1},A_{2})
\end{array}\right),\]

\noindent where $a_{j},b_{j}\in k$ and $ A_{j},B_{j}\in \mathcal{H}_{3}(k),$ for $ j=1,2$; `$\times$' and $T$ denote the Freudenthal cross product and the bilinear trace form defined on $\mathcal{H}_{3}(k)$, respectively (see \cite{AB} for details). As a vector space (also as a $G$-representation) we can identify $X$ with $Z$ from the above description of $X$ by assigning $(-x_{0},-y_{0},A,B)\in X$ to $\left(\begin{array}{cc}
-x_{0} & A \\
B & -y_{0}
\end{array}\right) \in Z$. We can consider this identification between two $Sp_{6}$-representations, in the case when $\text{char}(k)=3$ as well.

\subsection{The quartic invariant} The irreducible representation $X$ has a $1$-dimensional ring of invariants generated by a quartic homogeneous polynomial $J$ defined over $k$, which is uniquely determined up to a scalar multiple, i.e., $k[X]^{Sp_{6}}=k[J]$; since the $Sp_{6}$-orbit in $X$ with maximal dimension has dimension $13$, i.e., it has codimension $1$ in $X$ (see \cite{IJ}, Proposition $7$; \cite{PVL}, Proposition $12$; \cite{BGL}, Proposition $5.1$). To describe $J$, we consider $\wedge^{3}V$ as an irreducible representation of $SL_{6}$ (see \cite{BGL}; \cite{FH}, Chapter $15$). This representation has a quartic invariant, say $J_{1}$, which generates the ring of all $SL_{6}$-invariant polynomials defined on $\wedge^{3}V$ (see \cite{BGL}; \cite{SK}, p. $83$ for details). Following the calculations in (\cite{BGL}), we take $V=V_{2}\oplus V_{4}$, where $V_{2}=\text{span}\{e_{1},e_{4}\}$, $V_{4}=\text{span}\{e_{2},e_{3},e_{5},e_{6}\}$ and $\mathcal{B}=\{e_{1},e_{2},...,e_{6}\}$ is the basis of $V$, as we have mentioned earlier. Then there is an inclusion $w:V_{2}\otimes \wedge^{2}V_{4}\rightarrow \wedge^{3}V$ defined by $c\otimes x\mapsto c\wedge x$, $c\in V_{2},x\in \wedge^{2}V_{4}$ and in the image of $w$, $J_{1}$ can be expressed as $$J_{1}(e_{1}\wedge x +e_{4}\wedge y)=<x,y>^{2}-4\text{Pf}(x)\text{Pf}(y),$$ where $x,y\in \wedge^{2}V_{4}$ and $<,>$ denotes the polar of the quadratic form $\text{Pf}$, the Pfaffian (see \cite{BGL}; \cite{SK}, p. 83, for the formula of $J_{1}$). Since $Sp_{6}\subset SL_{6}$, $J_{1}$ is also $Sp_{6}$-invariant. Before proceeding further, we first prove the following proposition which follows from (\cite{IJ}).

\begin{proposition}\label{P3.4}
    Any element $x\in X$ can be made $G$-equivalent to an element of the form $$y=-x_{0}e_{1}e_{2}e_{3}-y_{0}e_{4}e_{5}e_{6}+y_{1}e_{1}e_{5}e_{6}+y_{2}e_{4}e_{2}e_{6}+y_{3}e_{4}e_{5}e_{3}$$ over $k(x)$, i.e., $y=(-x_{0},-y_{0},A,B)\in X$ where $A=0$ and $B=\text{diag}(y_{1},y_{2},y_{3})$.
\end{proposition}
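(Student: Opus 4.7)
The plan is to bring an arbitrary $x=(-x_{0},-y_{0},A,B)\in X$ into the required form via explicit elements of $G=Sp_{6}$ defined over $k(x)$. I would use two subgroups of $G$: the Levi
\[L=\left\{\left(\begin{array}{cc} M & 0 \\ 0 & (M^{t})^{-1}\end{array}\right) : M\in GL_{3}\right\}\cong GL_{3},\]
and the unipotent root subgroup
\[N=\left\{g_{T}=\left(\begin{array}{cc} I_{3} & 0 \\ T & I_{3}\end{array}\right) : T\in\mathrm{Sym}_{3}\right\}.\]

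First I would compute the action of $N$ on the quadruple $(x_{0},y_{0},A,B)$ by direct expansion of
\[g_{T}.(e_{1}\wedge e_{2}\wedge e_{3})=\left(e_{1}+\sum_{r}T_{r1}e_{r+3}\right)\wedge\left(e_{2}+\sum_{r}T_{r2}e_{r+3}\right)\wedge\left(e_{3}+\sum_{r}T_{r3}e_{r+3}\right)\]
and the analogous expansions on $A$-, $B$-, and $y_{0}$-type basis vectors of $X$. Sorting by the weight ``number of indices in $\{4,5,6\}$'', the degree-$d$-in-$T$ piece of $g_{T}.(e_{1}\wedge e_{2}\wedge e_{3})$ sits in weight-level $d$ higher than the starting vector, producing $A$-, $B$- and $y_{0}$-type contributions for $d=1,2,3$; while $g_{T}$ applied to an $A$-, $B$- or $y_{0}$-type basis vector never contributes back to the $A$-level. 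It follows that the $A$-component of $g_{T}.x$ is entirely determined by the $x_{0}$-component of $x$, yielding the exact formula
\[A\;\longmapsto\;A-x_{0}\,T.\]

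Second, the coordinate $x_{0}$ is algebraically independent of the remaining coordinates and is therefore invertible in $k(x)$ (and if a specific $x$ has $x_{0}=0$, one first applies a fixed element of $G$ to move it off the locus $\{x_{0}=0\}$). Taking the symmetric matrix $T=A/x_{0}\in\mathrm{Sym}_{3}(k(x))$, the element $g_{T}\in N(k(x))$ carries $x$ to an element of the form $(-x_{0},-y_{0}',0,B')$, where $y_{0}'$ and $B'$ are rational in the original coordinates.

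Third, the Levi $L$ acts on elements with vanishing $A$-component through congruence on $B'$ together with scalings of $x_{0}$ and $y_{0}'$ by powers of $\det(M)$. Since every symmetric matrix in characteristic $\neq 2$ is diagonalizable by congruence, one chooses $M\in GL_{3}(k(x))$ so that the resulting $B'$-component becomes $\mathrm{diag}(y_{1},y_{2},y_{3})$, yielding the required $y$.

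The main obstacle is Step 1: the wedge-product bookkeeping needed to verify the weight-grading claim together with the exact formula $A\mapsto A-x_{0}T$. The symmetry $T=T^{t}$ is used at each stage to guarantee the image stays inside $X=\wedge_{0}^{3}V$ rather than drifting into the larger $\wedge^{3}V$. Once the formula is in hand, the choice $T=A/x_{0}$ followed by the classical diagonalization of $B'$ by congruence is immediate.
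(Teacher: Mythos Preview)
Your approach is correct and essentially identical to the paper's: both use the unipotent subgroup $\{g_T:\alpha=\delta=I_3,\ \beta=0,\ \gamma=T\in\mathrm{Sym}_3\}$ with $T=x_0^{-1}A$ to annihilate the $A$-component, then the Levi $L\cong GL_3$ (acting by congruence on $B'$) to diagonalize. The paper cites \cite{IJ} for the transformation formula where you sketch the wedge-product computation; note a small wording slip in your Step~1---$g_T$ applied to an $A$-type basis vector does contribute its own degree-$0$ part back to the $A$-level, which is why the formula reads $A\mapsto A-x_0T$ rather than $A\mapsto -x_0T$.
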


\begin{proof}

We can easily check that any $x\in X$ can be made $G$-equivalent over $k(x)$ to some $y\in X$ in which the coefficient of $e_{1}e_{2}e_{3}$ is non-zero, as $X=\text{span}\{G.(e_{1}e_{2}e_{3})\}$. So, we may assume that $x_{0}\neq 0$ for $x=(-x_{0},-y_{0},A,B)\in X $. If we take the element $g\in G$, with $\alpha =\delta =I_{3}$, $\beta =0$ and $\gamma =x_{0}^{-1}A$, we get $g.x=(-x_{0},-y_{0}^{\prime},0,B^{\prime})$, for some scalar $y_{0}^{\prime}$ and $B^{\prime}$ is some symmetric matrix of order $3$ (see \cite{IJ}, p. 1022). Again, for an element $g^{\prime}\in G$ with $\beta=\gamma =0$, the transformation formula for $x=(-x_{0},-y_{0},A,B)\in X$ under the action of $g^{\prime}$ is given by

\begin{center}
     $x_{0}\longrightarrow \text{det}(\alpha)x_{0}$,\\
    $y_{0}\longrightarrow \text{det}(\delta)y_{0}$,\\
    $A\longrightarrow \text{det}(\alpha)\delta A\delta^{t}$,\\
    $B\longrightarrow \text{det}(\delta)\alpha B\alpha^{t}$.

\end{center}

\noindent For the above element $g.x=(-x_{0},-y^{\prime}_{0},0,B^{\prime})\in X$, $B^{\prime}$ is symmetric and therefore congruent to a diagonal matrix. Hence, for an appropriate choice of $\alpha$ we can get $g_{1}\in G$ with $\beta = \gamma = 0$ such that $g_{1}.(g.x)=(-x^{\prime\prime}_{0},-y^{\prime\prime}_{0},0,\text{diag}(y_{1},y_{2},y_{3}))$ for some scalars $x^{\prime\prime}_{0},y^{\prime\prime}_{0},y_{1},y_{2},y_{3}$. So, any element $x\in X$ can be reduced by the action of $G$ to an element of the form \[y=-x_{0}e_{1}e_{2}e_{3}-y_{0}e_{4}e_{5}e_{6}+y_{1}e_{1}e_{5}e_{6}+y_{2}e_{4}e_{2}e_{6}+y_{3}e_{4}e_{5}e_{3},\]

\noindent and we can see that the entire transformation process is rational over $k(x)$.

\end{proof}

\begin{remark}\label{R3.5}
  If $y=(-x_{0},-y_{0},0,\text{diag}(y_{1},y_{2},y_{3}))\in X$ with $x_{0}\neq 0$ and we take the element $g=\text{diag}(x^{-1}_{0},1,1,x_{0},1,1)\in Sp_{6}$, then $g.y=(-1,-x_{0}y_{0},0,\text{diag}(x^{-1}_{0}y_{1},x_{0}y_{2},x_{0}y_{3}))$. So, in particular, any $x\in X$ can be reduced to an element of the form $(-1,y_{0},0,\text{diag}(y_{1},y_{2},y_{3}))\in X$ over $k(x)$.
\end{remark}

It is very easy to see that $$X^{\prime}=\text{span}\{e_{1}e_{2}e_{3},e_{4}e_{5}e_{6},e_{1}e_{5}e_{6},e_{4}e_{2}e_{6},e_{4}e_{5}e_{3}\}\subset w(V_{2}\otimes \wedge^{2}V_{4})$$ and $J_{1}$ at $y=(-x_{0},-y_{0},0,\text{diag}(y_{1},y_{2},y_{3}))\in X^{\prime}$, has the value $J_{1}(y)=x_{0}^{2}y_{0}^{2}-4x_{0}y_{1}y_{2}y_{3}$. As each $G$-orbit in $X$ intersects $X^{\prime}$, the restriction of $J$ to $X^{\prime}$ is enough to specify the quartic $G$-invariant on $X$. So, we can define $J$ taking $J=J_{1}$ on $X^{\prime}$. In order to get the same invariant as described in (\cite{IJ}), we define $J=-(1/4)J_{1}$ on $X^{\prime}$, which is again a quartic $G$-invariant. If we use the identification of $X$ with $Z$, we can get the quartic invariant $J$ on $X$ as \[J(x)=x_{0}N(B)+y_{0}N(A)+T(A^{\#}, B^{\#})-(1/4)(x_{0}y_{0}-T(A,B))^{2},\]

\noindent where $x=(-x_{0},-y_{0},A,B)\in X$; $N$, $T$ and $\#$ are the cubic norm (the usual determinant in our case), the bilinear trace form and the Freudenthal adjoint defined on the Freudenthal algebra $\mathcal{H}_{3}(k)$, respectively. The same formula holds over fields with characteristic $3$ as $T(A,B)=\text{trace}(AB)$ and $A^{\#}=\text{det}(A)A^{-1}$, where $A,B$ are symmetric matrices of order $3$ and $AB$ is the usual product of the matrices $A$ and $B$ (see \cite{IJ} for details).

\section{A parametrization of composition algebras}\label{4}

In this section, we classify all the $G$-orbits (i.e., $Sp_{6}$-orbits) in $X-\{0\}$ with the help of (\cite{IJ}) and then prove that each of the $G_{k}$-orbits in $X_{k}-\{0\}$ represents an isomorphism class of composition algebras, giving us a parametrization of all composition algebras defined over the field $k$. We will also try to see if we can relate our parametrization to the family of Freudenthal algebras, later in Section \ref{6}. We start with the following result due to Igusa, from (\cite{IJ}, Proposition $7$). We recall that a group $G$ satisfies the Witt condition for a $G$-orbit $U$ means that $G_{k}$ acts transitively on $U_{k}$.

\begin{proposition}[Igusa, 1970]

\textit{The $G$-orbits in $X-\{0\}$ are $J^{-1}\{i\}$ with $i\neq 0$ having $-e_{1}e_{2}e_{3}-2(-i)^{1/2}e_{4}e_{5}e_{6}$ as representative and $J^{-1}\{0\}$ decomposes into three orbits represented by $-e_{1}e_{2}e_{3}$, $e_{1}e_{4}e_{3}+e_{5}e_{2}e_{3}$ and $e_{1}e_{4}e_{3}+e_{5}e_{2}e_{3}+e_{1}e_{2}e_{6}$, respectively. The Witt condition is satisfied only for the orbit represented by $-e_{1}e_{2}e_{3}$. The stabilizer subgroups are respectively isomorphic to $SL_{3}$ over $k(\sqrt{-i})$, $SL_{3}\ltimes (G_{a})^{6}$ over $k$, a subgroup $H$ of $G$ with two components $H_{0}, H_{1}$ ($H_{0}$ is isomorphic over $k$ to a semidirect product of $SL_{2}\times SL_{2}$ and a connected unipotent group of dimension $5$ and $H_{1}$ has a rational point over $k$) and $SO_{3}\ltimes(G_{a})^{5}$ over $k$, where the quadratic form of $SO_{3}$ is of index $1$ over $k$.}
    
\end{proposition}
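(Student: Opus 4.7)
The plan is to exploit the normal-form reduction of Proposition \ref{P3.4}: every $x \in X$ is $G(k(x))$-equivalent to a five-parameter $y = (-x_0, -y_0, 0, \mathrm{diag}(y_1,y_2,y_3))$, on which $J(y) = x_0 y_1 y_2 y_3 - \tfrac{1}{4} x_0^2 y_0^2$ by the explicit formula for $J$ derived at the end of Section \ref{3}. Since $J$ is $G$-invariant, orbits are first separated by its value, and one then asks for each level set how far $y$ can be simplified further and whether the simplification is $k$-rational (the Witt condition). Over $\overline{k}$ every such reduction is available, producing the orbit list below; over $k$ each reduction step must be checked for rationality.

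For $J(x) = i \neq 0$ the equation $x_0 y_1 y_2 y_3 - \tfrac{1}{4} x_0^2 y_0^2 = i$ forces $x_0 \neq 0$, so by Remark \ref{R3.5} we normalize to $x_0 = 1$. The maximal torus $\{\alpha = \mathrm{diag}(t_1,t_2,t_3),\ \delta = (\alpha^t)^{-1},\ \beta = \gamma = 0\}$ rescales $y_j \mapsto t_j^2 (t_1 t_2 t_3)^{-1} y_j$ and $y_0 \mapsto (t_1 t_2 t_3)^{-1} y_0$, and suitable elements with symmetric $\gamma$ absorb the remaining off-diagonal contributions; over $\overline{k}$ these operations collapse $y$ to $-e_1 e_2 e_3 - 2(-i)^{1/2} e_4 e_5 e_6$. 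The appearance of $\sqrt{-i}$ obstructs the $k$-rational version of this reduction, so the Witt condition fails in general; this is the algebraic shadow of the parametrization by composition algebras carried out in Section \ref{4}. Solving $g \cdot v_i = v_i$ explicitly for $v_i = -e_1 e_2 e_3 - 2(-i)^{1/2} e_4 e_5 e_6$ then forces $\beta = \gamma = 0$ with the diagonal blocks satisfying a joint determinant condition, yielding a stabilizer isomorphic to $SL_3$ over $k(\sqrt{-i})$.

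For $J = 0$ the relation $x_0 y_1 y_2 y_3 = \tfrac{1}{4} x_0^2 y_0^2$ admits several vanishing patterns. I stratify by support: the fully degenerate case $y_0 = y_1 = y_2 = y_3 = 0$ with $x_0 \neq 0$ gives $-e_1 e_2 e_3$, and the reduction uses only $k$-rational manipulations (torus and symmetric $\gamma$-unipotents), establishing the Witt condition for this highest-weight orbit. The remaining patterns produce the representatives $e_1 e_4 e_3 + e_5 e_2 e_3$ and $e_1 e_4 e_3 + e_5 e_2 e_3 + e_1 e_2 e_6$, distinguished by the dimension of the support and by a $k$-rational obstruction to further simplification. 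A dimension count using $\dim G = 21$ and the stabilizer dimensions $14, 11, 8$ (yielding orbit dimensions $7, 10, 13$) together with the open orbit of dimension $13$ in $J^{-1}(i)$ confirms that no further orbits exist.

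The main obstacle is pinning down the three remaining stabilizers. For $-e_1 e_2 e_3$, the embedded subgroup $SL_3 \subset G$ given by $\mathrm{diag}(\alpha, (\alpha^t)^{-1})$ visibly preserves the highest-weight line, as does the abelian $6$-dimensional unipotent subgroup $\{\alpha = \delta = I_3,\ \beta = 0,\ \gamma \text{ symmetric}\}$; these generate the semidirect product $SL_3 \ltimes (G_a)^6$. The stabilizer $H$ of $e_1 e_4 e_3 + e_5 e_2 e_3$ is the subtlest: its identity component $H_0$ is generated by two copies of $SL_2$ acting on the symplectic planes $\langle e_1, e_4 \rangle$ and $\langle e_2, e_5\rangle$ (both of which occur with the common factor $e_3$), extended by the $5$-dimensional unipotent radical obtained by solving $g \cdot v = v$ explicitly, while the second component $H_1$ is realized by an involution in $Sp_6(k)$ that interchanges the two planes and fixes the vector. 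For $e_1 e_4 e_3 + e_5 e_2 e_3 + e_1 e_2 e_6$, the additional summand couples the two $SL_2$'s into a diagonal copy that preserves a ternary quadratic form whose Witt index is $1$ (the isotropic vector being visible from a rational solution of the stabilizing equations), producing an $SO_3$ with a $5$-dimensional unipotent complement. Verifying each of these stabilizer structures requires careful but essentially mechanical matrix algebra.
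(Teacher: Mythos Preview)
The paper does not supply a proof of this proposition: it is quoted verbatim as Igusa's result (\cite{IJ}, Proposition~7) and then used as input for Theorems~\ref{q}--\ref{O}. There is therefore no argument in the present paper to compare your sketch against; the comparison would have to be with Igusa's original paper.

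Taken on its own terms, your outline follows the natural route and is broadly the one Igusa uses: reduce via Proposition~\ref{P3.4} to the five-parameter normal form, separate orbits by the value of $J$, then refine by further torus and unipotent moves and read off stabilizers by direct matrix computation. Two points deserve tightening. First, the Witt condition assertion is only half treated: you argue it holds for the orbit of $-e_1e_2e_3$ and fails for $J^{-1}\{i\}$, $i\neq 0$, but you say nothing about why it fails for the two intermediate orbits in $J^{-1}\{0\}$; those failures are precisely what drives Theorems~\ref{q} and~\ref{Q}, and in Igusa's treatment they are established by exhibiting inequivalent $k$-rational points (the square-class and congruence-class obstructions visible on pages 1025--1026 of \cite{IJ}). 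Second, the stratification of $J^{-1}\{0\}$ is asserted rather than carried out: saying the representatives are ``distinguished by the dimension of the support'' does not by itself show that every $y$ with $J(y)=0$ is $G$-equivalent to one of the three listed vectors, nor does your dimension count close that gap (it only separates the three candidates from each other). The stabilizer descriptions are likewise plausible sketches---the $SL_3\ltimes(G_a)^6$ and the $SL_3$ over $k(\sqrt{-i})$ are essentially visible---but the finer claims (the two-component structure of $H$, and that the quadratic form attached to the $SO_3$ has Witt index exactly~$1$) require the explicit matrix work you defer.
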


Now we fix some notation for the $G$-orbits described above, where $\mathcal{O}(x)$ denotes the orbit of $x\in X$. Let us denote

\begin{center}
    $U_{i}=J^{-1}\{i\}$, $i\neq 0$,\\
    $X_{0}=\mathcal{O}(-e_{1}e_{2}e_{3})$,\\
    $X_{1}=\mathcal{O}(e_{1}e_{4}e_{3}+e_{5}e_{2}e_{3}),$\\
    $X_{2}=\mathcal{O}(e_{1}e_{4}e_{3}+e_{5}e_{2}e_{3}+e_{1}e_{2}e_{6})$.
\end{center}

 To avoid notational ambiguity, we will denote the set of $k$-rational points in $X_{r}$ by $X_{r}(k)$, for $r=0,1,2$ and in $U_{i}$ by $U_{i}(k)$, for $i\in k^{\times}$. Then, $X_{0}(k)$ is a single orbit under the $G_{k}$-action, as the Witt condition is satisfied (see \cite{IJ} for details).

\begin{theorem}\label{q}
   \textit{$X_{1}(k)/G_{k}$ is in one to one correspondence with the set of all isomorphism classes of $2$-dimensional composition algebras defined over $k$.}
\end{theorem}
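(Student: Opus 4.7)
The plan is to construct the bijection explicitly by assigning to each $G_k$-orbit in $X_1(k)$ a quadratic \'etale $k$-algebra, detected by a single discriminant invariant. Under the identification $X \simeq Z$, the representative $e_1 e_4 e_3 + e_5 e_2 e_3$ of $X_1$ corresponds to the tuple $(0, 0, A_0, 0)$ where $A_0 \in \mathrm{Sym}_3(k)$ is the rank-$2$ symmetric matrix whose only nonzero entries are $(A_0)_{12} = (A_0)_{21} = 1$. The first step is to show that every $x \in X_1(k)$ is $G_k$-equivalent to some $(0, 0, A, 0)$ with $A \in \mathrm{Sym}_3(k)$ of rank $2$. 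The reduction refines the ``off-diagonal then diagonal'' procedure of Proposition \ref{P3.4}, but exploits the orbit-level constraints that $J(x) = 0$ and that $x \in X_1$ (rather than $X_0$ or $X_2$), which together pin down the rank of the $A$-component to be exactly $2$ and permit the reduction to be carried out rationally.

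Second, I would analyze the action of the diagonal block subgroup $\{g = \mathrm{diag}(\alpha, (\alpha^t)^{-1}) : \alpha \in GL_3(k)\} \subset G_k$ on $A$, which by the transformation formula in the proof of Proposition \ref{P3.4} sends $A \mapsto \det(\alpha)(\alpha^t)^{-1} A \alpha^{-1}$. Letting $q_A$ be the non-degenerate rank-$2$ quadratic form obtained by restricting $A$ to a complement of $\ker A$, the discriminant class $d(A) := \mathrm{disc}(q_A) \in k^\times / k^{\times 2}$ is preserved, since scaling by $\det(\alpha)$ multiplies the determinant of the $2 \times 2$ non-degenerate block by $\det(\alpha)^2$ and the congruence contributes another square. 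Hence the assignment $[x] \mapsto k[t]/(t^2 - d(A))$ is well defined on orbits, landing in the set of $2$-dimensional composition algebras, whose norm forms are exactly the $1$-fold Pfister forms $\langle 1, -d \rangle$.

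For bijectivity, injectivity follows from Witt's extension theorem: if $d(A) = d(A')$ modulo squares, then $q_A$ and $q_{A'}$ are isometric, and such an isometry can be promoted to a symplectic transformation relating $(0,0,A,0)$ and $(0,0,A',0)$. Surjectivity is immediate, since for any $d \in k^\times / k^{\times 2}$ the element with $A = \mathrm{diag}(1, -d, 0)$ lies in $X_1(k)$ and maps to $k[t]/(t^2 - d)$; and every $2$-dimensional composition algebra is of this form.

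The main obstacle is carrying out the reduction of Step~1 \emph{over $k$} rather than only over $k(x)$ as in Proposition \ref{P3.4}. The critical input is that belonging to the specific orbit $X_1$, rather than merely satisfying $J(x) = 0$, restricts the algebraic equations arising in the reduction to ones solvable over $k$. A cohomological alternative would be to use $X_1(k)/G_k \hookrightarrow H^1(k, H)$ together with the classical vanishing $H^1(k, Sp_6) = 1$ (via Proposition \ref{ESC} and Corollary \ref{ESCC}), and then compute $H^1(k, H) \cong k^\times / k^{\times 2}$ from the extension $1 \to H^0 \to H \to \pi_0(H) \simeq \mathbb{Z}/2 \to 1$, using that $H^1(k, H^0) = 1$ since $H^0$ is built from copies of $SL_2$ and a split unipotent group.
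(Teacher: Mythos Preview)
Your approach is conceptually sound but differs from the paper's and carries gaps you have not closed. The paper does not use your normal form $(0,0,A,0)$ with $\operatorname{rank}(A)=2$. Instead, citing Igusa (\cite{IJ}, p.~1025), it reduces every $x\in X_{1}(k)$ over $k$ to the shape $-e_{1}e_{2}e_{3}+a\,e_{1}e_{5}e_{6}=(-1,0,0,\operatorname{diag}(a,0,0))$ for some $a\in k^{\times}$, and again quotes Igusa for the equivalence criterion $ab^{-1}\in k^{\times 2}$. The bijection $X_{1}(k)/G_{k}\leftrightarrow k^{\times}/k^{\times 2}\leftrightarrow\{2\text{-dimensional composition algebras}\}$ is then immediate. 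So the paper imports both the $k$-rational normal form and the orbit-separation statement from \cite{IJ}, whereas you attempt to rebuild them.

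Your proposal has three genuine gaps. First, the reduction you call Step~1 is exactly the step you flag as the main obstacle, and you do not carry it out; Proposition~\ref{P3.4} only gives a normal form over $k(x)$, and your appeal to ``orbit-level constraints'' is not an argument. Second, for well-definedness you check invariance of $d(A)$ only under the block-diagonal subgroup $\{\operatorname{diag}(\alpha,(\alpha^{t})^{-1})\}$. You still need that any $g\in G_{k}$ carrying one normal-form element $(0,0,A,0)$ to another $(0,0,A',0)$ can be replaced by such a block-diagonal element; the analogous statement for $X_{2}$ in the paper (proof of Theorem~\ref{Q}) is itself a citation to \cite{IJ}, not a triviality. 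Third, your injectivity step is misstated: $d(A)=d(A')$ does \emph{not} imply $q_{A}\simeq q_{A'}$ (e.g.\ $\langle 1,1\rangle$ and $\langle 3,3\rangle$ over $\mathbb{Q}$). What is true, and what you actually need, is that the action $A\mapsto \det(\alpha)(\alpha^{t})^{-1}A\alpha^{-1}$ allows arbitrary scaling (take $\alpha=cI_{3}$), so the relevant invariant is the \emph{similarity} class of $q_{A}$, and binary forms up to similarity are indeed classified by discriminant. Your cohomological alternative via Lemma~\ref{l1} is closer to a complete argument, but you would still need to check that $H^{1}(k,H)\to H^{1}(k,\mathbb{Z}/2)$ is bijective, not merely injective, which requires either a splitting of $1\to H_{0}\to H\to \mathbb{Z}/2\to 1$ or a twisting argument you have not supplied.
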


\begin{proof}
   
    Any element $x\in X_{1}(k)$ is $G_{k}$-equivalent to an element of the form $-e_{1}e_{2}e_{3}+ae_{1}e_{5}e_{6}$, for some $a\in k^{\times}$; and we can also check that two elements $-e_{1}e_{2}e_{3}+ae_{1}e_{5}e_{6}$ and $-e_{1}e_{2}e_{3}+be_{1}e_{5}e_{6}$, for $a,b\in k^{\times}$, are $G_{k}$-equivalent if and only if $ab^{-1}\in k^{\times 2}$ (see \cite{IJ}, p. 1025).

\vspace{2 mm}
    
    We define a map $f:X_{1}(k)/G_{k}\rightarrow k^{\times}/k^{\times 2}$ by $f(\mathcal{O}(x))=a.k^{\times 2}$, where $x=-e_{1}e_{2}e_{3}+ae_{1}e_{5}e_{6}\in X_{1}(k)$. The map is well-defined and injective. To see this, let $\mathcal{O}(x),\mathcal{O}(y)\in X_{1}(k)/G_{k}$ be such that $f(\mathcal{O}(x))=f(\mathcal{O}(y))=a.k^{\times 2}$. This means $g_{1}.x= x_{1}= -e_{1}e_{2}e_{3}+ab^{2}_{1}e_{1}e_{5}e_{6}$ and $g_{2}.y= y_{1}= -e_{1}e_{2}e_{3}+ab^{2}_{2}e_{1}e_{5}e_{6}$, for some $b_{1},b_{2}\in k^{\times}$ and $g_{1},g_{2}\in G_{k}$. It is very easy to see that $g.x_{1}=y_{1}$, where $g=\text{diag}(b^{-1}_{1}b_{2},1,b_{1}b^{-1}_{2},b_{1}b^{-1}_{2},1,b^{-1}_{1}b_{2})\in G_{k}$. Hence, we have $\mathcal{O}(x)=\mathcal{O}(y)$. The surjectivity of $f$ is clear. So, $f$ is a bijection.
    
    \vspace{2 mm}
    
    As $\text{char}(k)\neq 2$, all quadratic field extensions over $k$ (the division composition algebras of dimension $2$) are given by $k(\sqrt{a})$ up to isomorphism, where $a$ varies over the non-trivial elements in $k^{\times}/k^{\times 2}$ and the trivial element will give us the split quadratic algebra, as $k(\sqrt{a})=k[t]/\langle t^{2}-a \rangle \simeq k\times k$ if $a\in k^{\times 2}$. Hence we have the one to one correspondence, as claimed.
    
\end{proof}

Before proceeding to the next results, we first prove the following lemmas.

\begin{lemma}\label{l1}
   \textit{Let $U$ be a $G$-orbit in $X$ and $H\subset G$ be the stabilizer subgroup of an element in $U_{k}$. Then $U_{k}/G_{k}\longleftrightarrow H^{1}(k,H)$.}
\end{lemma}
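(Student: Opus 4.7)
The plan is to deduce the bijection directly from the Galois cohomology preliminaries in Section \ref{2}. First, I would fix a $k$-rational point $x_{0}\in U_{k}$ and set $H=\mathrm{Stab}_{G}(x_{0})$, a closed $k$-subgroup of $G=Sp_{6}$. Since $U$ is a single $G$-orbit over $\overline{k}$, the orbit map $g\mapsto g\cdot x_{0}$ descends to a $G$-equivariant isomorphism $G/H\simeq U$ defined over $k$, which lets me identify $H^{0}(k,G/H)$ with $U(k)=U_{k}$.

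Next, applying Proposition \ref{ESC} to the inclusion $H\subset G$ yields the exact sequence of pointed sets
\[
1\to H(k)\to G(k)\to (G/H)(k)\to H^{1}(k,H)\to H^{1}(k,G).
\]
By Corollary \ref{ESCC}, the kernel of $H^{1}(k,H)\to H^{1}(k,G)$ is in bijection with $(G/H)(k)/G(k)=U_{k}/G_{k}$. So the claimed bijection reduces entirely to the vanishing $H^{1}(k,G)=H^{1}(k,Sp_{6})=1$.

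The hard part, and really the only non-formal step, is this vanishing statement. I would invoke the classical fact that $Sp_{2n}$ is a special group in Serre's sense: elements of $H^{1}(k,Sp_{2n})$ parametrize non-degenerate alternating bilinear forms of rank $2n$ over $k$ up to isometry, and any two such forms are equivalent. Once this is granted, the target of $H^{1}(k,H)\to H^{1}(k,G)$ is a singleton, its kernel is all of $H^{1}(k,H)$, and the identification $U_{k}/G_{k}\longleftrightarrow H^{1}(k,H)$ follows. I do not anticipate any further obstacle, since everything else is a formal unwinding of the cohomological setup already recorded in Section \ref{2}.
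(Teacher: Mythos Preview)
Your proposal is correct and follows essentially the same route as the paper: both invoke the exact sequence from Proposition~\ref{ESC}, use Corollary~\ref{ESCC} to identify the kernel of $H^{1}(k,H)\to H^{1}(k,G)$ with $U_{k}/G_{k}$, and conclude by the vanishing $H^{1}(k,Sp_{6})=1$. You supply slightly more detail (the explicit identification $G/H\simeq U$ and the reason for the vanishing), but the argument is the same.
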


\begin{proof}
The exact sequence of cohomology sets is given by (see Proposition \ref{ESC}),

\begin{center}
    $1\longrightarrow H^{0}(k,H)\longrightarrow H^{0}(k,G)\longrightarrow H^{0}(k,G/H)\longrightarrow H^{1}(k,H)\longrightarrow H^{1}(k,G)$.
\end{center}

Here we have $H^{1}(k,G)=H^{1}(k,Sp_{6})$, which is trivial, and we can identify the kernel of the map $H^{1}(k,H)\rightarrow H^{1}(k,G)$ with the quotient space of $(G/H)(k)$ by the action of $G_{k}$ (see Corollary \ref{ESCC}). So, we get a bijection between $U_{k}/G_{k}$ and $H^{1}(k,H)$.
\end{proof}

\begin{lemma}\label{l2}
  \textit{The set of all isometry classes of rank $3$ trivial discriminant quadratic forms defined over $k$, is in bijection with the set of all isometry classes of $2$-fold Pfister forms defined over $k$}.
\end{lemma}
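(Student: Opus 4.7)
The plan is to construct the bijection explicitly via the pure subform construction and then invoke Witt cancellation to check it is well-defined in both directions. A $2$-fold Pfister form has the shape $\langle\langle a,b\rangle\rangle=\langle 1,-a,-b,ab\rangle$, with pure subform $\langle -a,-b,ab\rangle$. So one naturally defines
\[
\Phi : \{\text{$2$-fold Pfister forms}\}/\simeq \;\longrightarrow\; \{\text{rank $3$ trivial discriminant forms}\}/\simeq, \qquad [\langle\langle a,b\rangle\rangle]\longmapsto[\langle -a,-b,ab\rangle].
\]

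First I would check that $\Phi$ lands in the correct target. The determinant of $\langle -a,-b,ab\rangle$ in the given basis is $(-a)(-b)(ab)=a^{2}b^{2}\in k^{\times 2}$, so the discriminant equals $(-1)^{3\cdot 2/2}\cdot a^{2}b^{2}\cdot k^{\times 2}=(-1)\cdot k^{\times 2}$, which is exactly the ``trivial discriminant'' condition for rank $3$ stated in the preliminaries. Next I would verify that $\Phi$ is well-defined on isometry classes: this is precisely the fact that the pure subform of a Pfister form is determined up to isometry by the Pfister form itself, which follows from Witt cancellation applied to $\langle 1\rangle\perp q'\simeq q\simeq q''\simeq \langle 1\rangle\perp q'''$.

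To construct the inverse, I start with a rank $3$ trivial discriminant form $q$. Diagonalizing, $q\simeq\langle \alpha,\beta,\gamma\rangle$ with $\alpha\beta\gamma\in k^{\times 2}$. Writing $\gamma=\alpha\beta\cdot s^{2}$ for some $s\in k^{\times}$ and scaling by a square (which preserves isometry class of the one-dimensional component), one gets $q\simeq\langle \alpha,\beta,\alpha\beta\rangle$; setting $a=-\alpha$ and $b=-\beta$ yields $q\simeq\langle -a,-b,ab\rangle$, the pure subform of $\langle\langle a,b\rangle\rangle$. Hence $\Phi$ is surjective. So the natural candidate for the inverse is $[q]\mapsto[\langle 1\rangle\perp q]$, and I would check it is a two-sided inverse by another application of Witt cancellation: if $\langle -a,-b,ab\rangle\simeq\langle -a',-b',a'b'\rangle$, then adding $\langle 1\rangle$ to both sides gives $\langle\langle a,b\rangle\rangle\simeq\langle\langle a',b'\rangle\rangle$, and conversely.

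The only place where I expect any real work is the injectivity step, where one must be sure that the assignment $q\mapsto\langle 1\rangle\perp q$ really does produce a Pfister form; but this is automatic because any form of the shape $\langle 1\rangle\perp\langle -a,-b,ab\rangle$ tautologically equals $\langle\langle a,b\rangle\rangle$. Thus the entire argument reduces to the existence of a convenient diagonalization together with Witt cancellation, both of which are standard results from the theory of quadratic forms over fields of characteristic different from $2$ (see \cite{LTY}, Chapter X).
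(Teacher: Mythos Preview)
Your proposal is correct and follows essentially the same approach as the paper: both construct the bijection by sending a $2$-fold Pfister form to its pure subform, verify that every rank $3$ trivial discriminant form arises this way via an explicit diagonalization, and use Witt cancellation to conclude that the pure subform determines the Pfister form up to isometry. Your write-up is slightly more explicit about naming the map and its inverse, but the mathematical content is identical.
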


\begin{proof}
    The $2$-fold Pfister forms over $k$ are given by $\langle 1,-a\rangle\otimes\langle 1,-b\rangle\simeq \langle 1,-a,-b,ab\rangle$, for $a,b\in k^{\times}$. We know that two Pfister forms are isometric if and only if the corresponding pure subforms are isometric (see \cite{LTY}, Chapter X). So, these forms are uniquely determined up to isometry by their rank $3$ pure subforms $\langle -a,-b,ab\rangle $, which have trivial discriminants. Conversely, let the quadratic form $\langle x,y,z\rangle, \text{ for } x,y,z\in k^{\times}$, have a trivial discriminant and $xyz=c^{2}$, for some $c\in k^{\times}$. Then $\langle x,y,z\rangle\simeq  \langle 1/x,1/y,1/xy\rangle $, which is the pure subform of the $2$-fold Pfister form $\langle 1,1/x\rangle \otimes \langle 1,1/y\rangle$. Hence, every isometry class of pure subforms of $2$-fold Pfister forms represents an isometry class of trivial discriminant rank $3$ quadratic forms over $k$, uniquely and vice versa. So we are done.
\end{proof}

The set of all isometry classes of $2$-fold Pfister forms over $k$ classifies quaternion algebras defined over the field $k$, up to isomorphism. So, if we can get some particular type of $G_{k}$-orbits in $X_{k}-\{0\}$ in bijection with any of these two sets in the above lemma, we can parametrize the quaternion algebras over $k$ in terms of those orbits. We describe such orbits in our next result.

\begin{theorem}\label{Q}
 \textit{$X_{2}(k)/G_{k}$ is in one to one correspondence with the set of all isomorphism classes of quaternion algebras defined over $k$.}
\end{theorem}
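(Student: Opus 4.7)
The plan is to invoke Lemma \ref{l1} to turn $X_{2}(k)/G_{k}$ into a non-abelian Galois cohomology set and then match the outcome with quaternion algebras using Lemma \ref{l2}. By the Igusa proposition recalled at the beginning of this section, the $G$-stabilizer of the representative $x_{2}=e_{1}e_{4}e_{3}+e_{5}e_{2}e_{3}+e_{1}e_{2}e_{6}\in X_{2}$ is $H\simeq SO_{3}\ltimes (G_{a})^{5}$ over $k$, where the underlying rank-$3$ quadratic form is of index $1$. An explicit computation of the stabilizer should confirm that this form has trivial discriminant; concretely one expects it to be a scalar multiple of $\langle 1,-1,1\rangle$, after which the $SO_{3}$ may be taken to act on a trivial-discriminant rank-$3$ form. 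Lemma \ref{l1} then yields $X_{2}(k)/G_{k}\longleftrightarrow H^{1}(k,H)$.

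Next, I would compute $H^{1}(k,H)$ via the split exact sequence $1\to (G_{a})^{5}\to H\to SO_{3}\to 1$. Additive Hilbert $90$ gives $H^{1}(k,(G_{a})^{5})=0$, and the vanishing persists after twisting by any $SO_{3}$-cocycle since every such twist of $(G_{a})^{5}$ is still a vector group. The standard non-abelian cohomology machinery for a split semidirect product then produces a bijection $H^{1}(k,H)\longleftrightarrow H^{1}(k,SO_{3})$. The latter is the pointed set of isometry classes of non-degenerate rank-$3$ quadratic forms over $k$ with the same (trivial) discriminant as the reference form, and by Lemma \ref{l2} these are in bijection with isometry classes of $2$-fold Pfister forms, each Pfister form $\langle\langle a,b\rangle\rangle$ corresponding uniquely to the isomorphism class of the quaternion algebra $(a,b)_{k}$. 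Composing these bijections produces the theorem.

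The main obstacle is the reduction $H^{1}(k,H)\simeq H^{1}(k,SO_{3})$, which cannot be obtained by merely quoting a long exact sequence because $H^{1}$ is only a pointed set in this non-abelian setting. One must show separately that every $1$-cocycle with values in $H(\overline{k})$ is cohomologous to one taking values in $SO_{3}(\overline{k})$, and that two $SO_{3}$-valued cocycles cohomologous in $H$ are already cohomologous in $SO_{3}$; both facts follow from the splitting of the extension and from the vanishing of $H^{1}$ of every twist of $(G_{a})^{5}$, but the diagram chase with $1$-cochains and coboundaries has to be carried out carefully. A secondary technicality is the direct verification that the $SO_{3}$ in question preserves a form of trivial discriminant, without which the match with quaternions via Lemma \ref{l2} would fail by a discriminant shift.
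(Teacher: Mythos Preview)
Your cohomological approach is correct and is in fact exactly the ``alternate proof'' the paper sketches in its final sentence of the proof of Theorem~\ref{Q}: one applies Lemma~\ref{l1}, reduces $H^{1}(k,SO_{3}(q)\ltimes (G_{a})^{5})$ to $H^{1}(k,SO_{3}(q))$ for $q$ a rank-$3$ trivial-discriminant form, and finishes via Lemma~\ref{l2}. Your discussion of the twisting argument for the unipotent radical and of the verification that $q$ has trivial discriminant correctly identifies the two points that require care.

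The paper's \emph{main} proof, however, is a direct orbit computation rather than a cohomological one. It uses Igusa's result that every $x\in X_{2}(k)$ is $G_{k}$-equivalent to a point of the form $(0,0,A,0)$ with $A=\mathrm{diag}(a,b,c)$ nonsingular, and that two such reduced points are $G_{k}$-equivalent precisely when they are related by an element with $\beta=\gamma=0$, whose action on $A$ is $A\mapsto \det(\delta)^{-1}\delta A\delta^{t}$ for $\delta\in GL_{3}(k)$. The paper then shows by hand that orbits under this twisted congruence action are in bijection with isometry classes of trivial-discriminant ternary forms, and concludes via Lemma~\ref{l2}. This direct route avoids non-abelian cohomology entirely and makes the orbit representatives---and hence the norm forms of the corresponding quaternion algebras---completely explicit, which is needed for the ``formula for the norm'' claim in Theorem~\ref{M}. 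Your approach is cleaner conceptually and generalizes more readily, but it does not by itself produce explicit orbit representatives.
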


\begin{proof}

     Any element $x\in X_{2}(k)$ is $G_{k}$-equivalent to an element of the form $ae_{4}e_{2}e_{3}+be_{1}e_{5}e_{3}+ce_{1}e_{2}e_{6}, \text{ for } a,b,c\in k^{\times}$ (see \cite{IJ}, p. 1026), i.e., $x\sim (-x_{0},-y_{0},A,B)\in X_{k}$ with $x_{0}=0, y_{0}=0, B=0, A=\text{diag}(a,b,c)$ and $\text{det}(A)\neq 0$. Also, if two elements in this reduced form are equivalent by some $g\in G_{k}$, we can choose that $g$ with $\beta =\gamma= 0$ (see \cite{IJ}, p. 1026).

\vspace{1.5 mm}

    We define the action of $GL_{3}(k)$ on the set of all $3\times 3$ symmetric non-singular matrices by $U.(x_{ij}) = \text{det}(U)^{-1}U(x_{ij})U^{t},$ where $ U\in GL_{3}(k)$ and $(x_{ij})$ is a symmetric non-singular matrix of order $3$. This is the formula for the action of an element $g\in Sp_{6}(k)$, given by $\beta = \gamma = 0, \delta = U$; on an element $(-x_{0},-y_{0},A,B)\in X_{2}(k)$ with $x_{0}=y_{0}=0, B=0, A=(x_{ij})$ and $\text{det}(A)\neq 0$ (see Proposition \ref{P3.4}). Let $S$ denote the set of all $GL_{3}(k)$-orbits under the action defined above. 
    
    \vspace{1.5 mm}
    
    We define a map $X_{2}(k)/G_{k}\rightarrow S$ by $\mathcal{O}(x)\mapsto [A]$, the orbit of $A$, where $x=(0,0,A,0)\in X_{2}(k)\subset X_{k}$. This map is clearly a bijection. Now, we will prove that $S$ admits a bijection with the set $T= \{$Isometry classes of rank $3$ trivial discriminant quadratic forms over $k\}$.

\vspace{1.5 mm}

    First we define a map $f:S\rightarrow T$. Let $V_{1}$ be any non-singular symmetric matrix of order $3$, representing a non-degenerate quadratic form over $k$. Now, $\text{det}(\text{det}(U)^{-1}UV_{1}U^{t})=\text{det}(U)^{-1}\text{det}(V_{1})$, for $U\in GL_{3}(k)$. So, if we choose a matrix $U\in GL_{3}(k)$ with $\text{det}(U)=\text{det}(V_{1})$, then $V_{1}$ is equivalent to a symmetric matrix of determinant $1$, say $W_{1}$. Then we define $f([V_{1}])=[W_{1}]_{T}$, where $[W_{1}]_{T}\in T$ denotes the isometry class of the trivial discriminant rank $3$ quadratic form represented by the symmetric matrix $W_{1}$.

    \vspace{1.5 mm}

    The map $f$ is well defined. To see that, suppose $[V_{1}]=[W_{1}]$ in $S$ and $\text{det}(V_{1})=\text{det}(W_{1})=1$. Then,
    \begin{center}
      $W_{1}=\text{det}(U)^{-1}UV_{1}U^{t}\implies\text{det}(U)=1$.
    \end{center}
    
     So, we get $W_{1}=UV_{1}U^{t}$, which implies $ f([V_{1}])=f([W_{1}])$.
    \vspace{2 mm}
    
    The map $f$ is injective. Let $f([V_{1}])=f([W_{1}])$, for $[V_{1}],[W_{1}]\in S$. We may assume that $\text{det}(V_{1})=\text{det}(W_{1})=1$. Then,
    
    \begin{center}
        $[V_{1}]_{T}=[W_{1}]_{T}$\\
        \vspace{1 mm}
        $\implies$ $\exists$ $U\in GL_{3}(k)$ such that $UV_{1}U^{t}=W_{1}$\\
        \vspace{1 mm}
        $\implies$ $\text{det}(U)^{2}=1$\\
        \vspace{1 mm}
        $\implies \text{det}(U)=1,-1.$
    \end{center}

    If $\text{det}(U)=1$, we can write $\text{det}(U)^{-1}UV_{1}U^{t}=W_{1}$.

    \vspace{1.5 mm}
    
    If $\text{det}(U)=-1$, then $\text{det}(U)^{-1}UV_{1}U^{t}=-W_{1}=\text{det}(-I_{3})^{-1}(-I_{3})W_{1}(-I_{3})^{t}$.
    
    \vspace{1.5 mm}
    
    In both of the cases, $[V_{1}]=[W_{1}]$.

    \vspace{1.5 mm}

    The surjectivity is clear. If $[V_{1}]_{T}\in T$ with $\text{det}(V_{1})=1$, then $f([V_{1}])=[V_{1}]_{T}$ for $[V_{1}]\in S$, and every element in $T$ has a representative of the form $[V_{1}]_{T}$ with $\text{det}(V_{1})=1$.

    \vspace{1.5 mm}

    By Lemma \ref{l2}, the set $T$ admits a bijection with the set of all isometry classes of $2$-fold Pfister forms over $k$, which classify all quaternion algebras over $k$, up to isomorphism; and the formula for the norm can also be obtained by Lemma \ref{l2}, using the diagonal matrix with determinant $1$, which appears as a representative of the corresponding orbit. We can get an alternate proof of this theorem by using Lemma \ref{l1}, showing that $H^{1}(k,SO_{3}(q))\longleftrightarrow H^{1}(k,SO_{3}(q)\ltimes (G_{a})^{5})$, where $q$ is a rank $3$ trivial discriminant quadratic form over $k$.

\end{proof}

Therefore, we obtain a parametrization of all composition algebras with dimensions $2^{r}$ defined over the field $k$, in terms of the orbit spaces $X_{r}(k)/G_{k}$, for $r=0,1,2$; where $J^{-1}\{0\}-\{0\}=\bigsqcup_{r=0}^{2}X_{r}$, the disjoint union of $X_{0},X_{1}$ and $X_{2}$. Each of these orbits represents an isomorphism class of composition algebras uniquely, and the formula for the norm can also be determined from the orbit representatives. Now, we are left with the case of octonion algebras.

\subsection{The singular case}
Let $X_{3}=X-J^{-1}\{0\}$. Here we also denote the set of all $k$-rational points in $X_{3}$ by $X_{3}(k)$. We will show that each $G_{k}$-orbit in $X_{3}(k)$ represents an isomorphism class of octonion algebras defined over $k$, which are the only composition algebras with exceptional automorphism groups. Here, unlike the previous cases, we will see that there may exist more than one orbit representing the same isomorphism class. For this reason, we refer to this subsection as the singular case. However, if we write $X_{3}(k)$ as the disjoint union of the $G_{k}$-invariant subsets $U_{i}(k)=J^{-1}\{i\}(k)$ for $i\in k^{\times}$, then each $G_{k}$-orbit in a fixed $U_{i}(k)$ represents a unique isomorphism class without repetitions. First, we prove this for $U_{i}(k)$, where $-i\in k^{\times}- k^{\times 2}$.

\begin{theorem}\label{O}
    \textit{Each $U_{i}(k)/G_{k}$, for $-i\in k^{\times}-k^{\times 2}$, is in one to one correspondence with the set of all isomorphism classes of octonion algebras containing $k(\sqrt{-i})$ as a composition subalgebra of dimension $2$, under the isomorphisms which fix $k(\sqrt{-i})$ pointwise.}
\end{theorem}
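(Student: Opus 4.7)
My plan is to translate the orbit problem into Galois cohomology via Lemma \ref{l1} and then invoke the Jacobson-Neumann construction recalled in Section \ref{2}. First I would exhibit an explicit $k$-rational point $x_0 \in U_i(k)$; for instance, $x_0 = (-1,0,0,\mathrm{diag}(1,1,i))$ has $J(x_0) = i$ by the formula derived in Section \ref{3}. Setting $K = k(\sqrt{-i})$ and $H = \mathrm{Stab}_G(x_0)$ as a $k$-subgroup of $G = Sp_6$, Lemma \ref{l1} yields the bijection $U_i(k)/G_k \longleftrightarrow H^1(k, H)$.

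The heart of the argument is to identify $H$ with $SU_3(h_0)$ for some rank $3$ hermitian form $h_0$ over $K/k$ of trivial discriminant. By Igusa's theorem $H\otimes\overline{k}\simeq SL_3$, and since $x_0$ becomes $G(K)$-equivalent to the Igusa representative $-e_1e_2e_3 - 2\sqrt{-i}\,e_4e_5e_6$ (whose stabilizer is the diagonally embedded $SL_3\subset Sp_6$), we obtain $H\otimes K\simeq SL_3$. Because $x_0$ is not $G(k)$-conjugate to the non-$k$-rational Igusa representative, $H$ is a non-split outer form of $SL_3$, i.e.\ $H\simeq SU(B,\tau)$ for a degree $3$ central simple $K$-algebra $B$ with an involution of the second kind. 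The inclusion $H\hookrightarrow Sp_6\subset GL(V)$ provides a faithful $6$-dimensional $k$-representation which admits a $K$-linear structure of $K$-dimension $3$, forcing $B$ to split; hence $H\simeq SU_3(h_0)$. Trivial discriminant of $h_0$ is then pinned down by requiring that the imaginary part of $h_0$ (as an alternating $k$-bilinear form on $V$) be proportional to the $Sp_6$-form $Q$.

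The remaining steps are formal. The exact sequence $1\to SU_3(h_0)\to U_3(h_0)\to U_1\to 1$ of $k$-groups, combined with the identification of $H^1(k, U_3(h_0))$ with isomorphism classes of rank $3$ $K/k$-hermitian forms and of $H^1(k, U_1)\simeq k^\times/N_{K/k}(K^\times)$ via the discriminant, gives
\[
H^1(k, SU_3(h_0)) \longleftrightarrow \{\text{rank $3$ trivial discriminant $K/k$-hermitian forms}\}/{\simeq}.
\]
The construction recalled in Section \ref{2} then associates to such $(V,h)$ the octonion $C = K\oplus V$, yielding a bijection with isomorphism classes of octonion algebras over $k$ containing $K$ as a $2$-dimensional composition subalgebra, where morphisms fix $K$ pointwise. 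Composing these bijections completes the proof. I expect the main obstacle to be the stabilizer identification above, particularly the verification that $h_0$ has trivial discriminant; an alternative route would be to reduce $x$ to the canonical form of Proposition \ref{P3.4} and use the relation $y_1y_2y_3 = y_0^2/4 + i = N_{K/k}(y_0/2 + \sqrt{-i})$, forced by $J(x)=i$, to construct the associated trivial discriminant hermitian form directly and verify compatibility with $G_k$-equivalence by hand.
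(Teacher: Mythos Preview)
Your overall architecture---Lemma \ref{l1}, then identify the stabilizer as $SU_3$ of a trivial-discriminant ternary $K/k$-hermitian form, then invoke the classification of $H^1(k,SU_3)$ and the Jacobson construction of Section \ref{2}---is exactly the paper's. The divergence is in how the stabilizer is pinned down, and here your primary (abstract) route has real gaps.

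\textbf{(a)} The sentence ``Because $x_0$ is not $G(k)$-conjugate to the non-$k$-rational Igusa representative, $H$ is a non-split outer form of $SL_3$'' is a non-sequitur: failure to be $G(k)$-conjugate to a point that is not even $k$-rational says nothing about whether $H$ is inner, outer, split, or not. What you actually need is that a non-split \emph{inner} form of $SL_3$ cannot split over a quadratic extension (its splitting fields have degree divisible by $3$), so $H\otimes K\simeq SL_3$ with $[K:k]=2$ forces $H$ to be either split or a unitary form; you then still owe an argument ruling out $H\simeq SL_3$ over $k$. \textbf{(b)} The claim that the inclusion $H\hookrightarrow GL(V)$ endows $V$ with a $K$-linear structure of $K$-dimension $3$ is asserted, not proved. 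Over $\overline{k}$ the $6$-dimensional restriction is $V_3\oplus V_3^{\ast}$, but you have not produced the $K$-action on $V$ over $k$, and without it the conclusion ``$B$ splits'' is unsupported. \textbf{(c)} ``Trivial discriminant is pinned down by requiring the imaginary part of $h_0$ to be proportional to $Q$'' is too vague to stand on its own; you would need to write $h_0$ down to make this precise.

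The paper bypasses all three issues by taking what you call the ``alternative route'' as its \emph{main} argument: reduce via Proposition \ref{P3.4} and Remark \ref{R3.5} to $y=(-1,-y_0,0,\operatorname{diag}(y_1,y_2,y_3))$, read off $y_1y_2y_3 = y_0^2/4 + i = N_{K/k}(y_0/2+\sqrt{-i})$ from $J(y)=i$ (this is exactly your observed relation, and it gives trivial discriminant for free), then explicitly conjugate the diagonally embedded $SL_3(K)$ stabilizer of the Igusa representative by an element $g_y\in G(K)$ with $g_y.y=-e_1e_2e_3-2\sqrt{-i}\,e_4e_5e_6$ and impose the Galois-fixedness condition to see directly that the $k$-points of $\operatorname{Stab}_G(y)$ form the special unitary group of the hermitian form $\operatorname{diag}(y_1,y_2,y_3)$. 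This concrete descent computation simultaneously delivers the unitary structure, the splitness of the underlying algebra, and the explicit hermitian form, so no abstract classification of $k$-forms of $SL_3$ is needed. I would promote your alternative route to the main line and drop the abstract argument, or at least supply the missing pieces (a)--(c) if you keep it.
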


\begin{proof}
    Any element $x\in U_{i}$ is $G$-equivalent to $-e_{1}e_{2}e_{3}-2(-i)^{1/2}e_{4}e_{5}e_{6}$ over $k(x,\sqrt{-i})$, i.e., for all $x\in U_{i}(k)$, $\exists$ an element $g_{x}\in G(k(\sqrt{-i}))$ such that $g_{x}.x=-e_{1}e_{2}e_{3}-2(-i)^{1/2}e_{4}e_{5}e_{6}$ (see \cite{IJ}, p. 1023). As $-i\in k^{\times}-k^{\times 2}$, the action of $G_{k}$ on $U_{i}(k)$ is not transitive, in general. Now, any element $x$ in $U_{i}(k)\subset X_{k}$ can be reduced by the $G_{k}$-action to an element of the form \[y=-e_{1}e_{2}e_{3}-y_{0}e_{4}e_{5}e_{6}+y_{1}e_{1}e_{5}e_{6}+y_{2}e_{4}e_{2}e_{6}+y_{3}e_{4}e_{5}e_{3}\] (see Remark \ref{R3.5}), for some $y_{0},y_{1},y_{2},y_{3}\in k$. Each $G_{k}$-orbit in $U_{i}(k)$ has one such representative at which the value of $J$ is $i$. So, we get the relation

    \begin{center}
        
  $y_{1}y_{2}y_{3}=(1/4)y_{0}^{2}+i= N((1/2)y_{0}+\sqrt{-i})$, 

    \end{center}
    
  \noindent where $N$ is the $k(\sqrt{-i})/k$-norm. If we use the matrix notation we get $\text{det}(B)=N((1/2)y_{0}+\sqrt{-i})$, where $y=(-1,-y_{0},A,B)\in U_{i}(k)$ and we have $A=0, B=\text{diag}(y_{1},y_{2},y_{3})$. The stabilizer subgroup of the element $-e_{1}e_{2}e_{3}-2(-i)^{1/2}e_{4}e_{5}e_{6}$ is isomorphic to $SL_{3}$ over $k(\sqrt{-i})$ (see \cite{IJ}, p. 1024), where the embedding is given by
  
  \begin{center}
      $SL_{3}\simeq \biggl\{ \left(\begin{array}{cc}
W & 0 \\
0 & (W^{t})^{-1}
\end{array}\right): W\in SL_{3} \biggl\}\subset G$.
  \end{center}
  
  \noindent For the above $y\in U_{i}(k)$, one can compute an element $g_{y}\in G(k(\sqrt{-i}))$ such that $g_{y}.y=-e_{1}e_{2}e_{3}-2(-i)^{1/2}e_{4}e_{5}e_{6}$ (see \cite{IJ}, p. 1023 for the description of such $g_{y}$). Then, the stabilizer subgroup of $y\in U_{i}(k)$ is isomorphic to $g_{y}^{-1}SL_{3}(k(\sqrt{-i}))g_{y}$ over $k(\sqrt{-i})$ and the $k$-rational points are given by the relation

\begin{center}
    $\sigma \biggl(g_{y}^{-1}\left(\begin{array}{cc}
W & 0 \\
0 & (W^{t})^{-1}
\end{array}\right)g_{y}\biggl)=g_{y}^{-1}\left(\begin{array}{cc}
W & 0 \\
0 & (W^{t})^{-1}
\end{array}\right)g_{y}$, 
\end{center}

  \noindent where $\sigma$ is the standard involution on $k(\sqrt{-i})$ and $W\in SL_{3}(k(\sqrt{-i}))$. It is straightforward to check that the stabilizer subgroup of $y$ is isomorphic over $k$ to the special unitary group of the non-degenerate ternary $k(\sqrt{-i})/k$-hermitian form given by the matrix $B=\text{diag}(y_{1},y_{2},y_{3})$, which is a $k$-form of $SL_{3}$ (see \cite{SJP}, Chapter III, Section $1$, p. 126). The discriminant of this hermitian form is trivial, as we have seen that $\text{det}(B)=y_{1}y_{2}y_{3}$ is a $k(\sqrt{-i})/k$-norm. Then, by Lemma \ref{l1} we have $$U_{i}(k)/G_{k}\longleftrightarrow H^{1}(k,SU(h)),$$ where $h$ is the non-degenerate ternary hermitian form with trivial discriminant represented by the matrix $B$. Now, $H^{1}(k,SU(h))$ can be identified with the set of all isometry classes of non-degenerate ternary $k(\sqrt{-i})/k$-hermitian forms with trivial discriminants (see \cite{KMRT}, Example (29.19)). The latter set has one to one correspondence with the set of all isomorphism classes of octonion algebras over k which contain $k(\sqrt{-i})$ as a subalgebra, under the isomorphisms which restrict to the identity on $k(\sqrt{-i})$ (see \cite{TML}, Theorem $2.2$). Hence, we get the bijection, as claimed.

\end{proof}

\noindent \textbf{The square case:} If $-i\in k^{\times 2}$, $G_{k}$ acts transitively on $U_{i}(k)$ (see \cite{IJ}, p. 1023) and we have $k(\sqrt{-i})=k[t]/\langle t^{2}+i\rangle\simeq k\times k$, which is the split composition algebra of dimension $2$ with norm $N((x,y))=x^{2}+iy^{2},$ for $x,y\in k$. In this case, the single orbit $U_{i}(k)/G_{k}$ corresponds to the isomorphism class of the split octonion algebra $Zorn(k)$, which is the unique octonion algebra containing $k\times k$ as a subalgebra.

\vspace{3 mm}

So, if we consider any point in the orbit space $X_{3}(k)/G_{k}$, it is in $U_{i}(k)/G_{k}$ for some $i\in k^{\times}$ and represents an isomorphism class of octonion algebras over $k$ that contain $k(\sqrt{-i})$ as a subalgebra. As every octonion algebra is obtained by repeated applications of Cayley-Dickson doubling from the field $k$, each of them contains a composition subalgebra of dimension $2$. Therefore, each isomorphism class of octonion algebras can be obtained from $U_{i}(k)/G_{k}$, for some $i\in k^{\times}$.

\vspace{3 mm}

\noindent \textbf{The octonion determined by an orbit:} If $-i\in k^{\times 2}$, $k(\sqrt{-i})\simeq k\times k$ and the single orbit $U_{i}(k)/G_{k}$ corresponds to the split octonion algebra $Zorn(k)$, which we have already described in detail (see Section \ref{2}). Let us assume that $-i\in k^{\times}-k^{\times 2}$, and $x=(-1,-y_{0},0,B)\in U_{i}(k)/G_{k}$ be an orbit representative with $B$ diagonal and $\text{det}(B)\neq 0$. Then, the corresponding octonion algebra is given by $C=k(\sqrt{-i})\oplus k(\sqrt{-i})^{3}$, with the norm as $N_{C}(a,x)=N(a)+h(x,x),$ where $a\in k(\sqrt{-i}), x\in k(\sqrt{-i})^{3}$, $N$ is the $k(\sqrt{-i})/k$-norm and $h$ is the non-degenerate $k(\sqrt{-i})/k$-hermitian form on $k(\sqrt{-i})^{3}$ given by the diagonal matrix $B$, with respect to the standard basis. This description of octonions also works if $-i\in k^{\times 2}$. For example, take $x=-e_{1}e_{2}e_{3}-e_{4}e_{5}e_{6}$. Then $-J(x)=1/4\in k^{\times 2}$ and $g.x=(-1,0,0,\text{diag}(-1/4,-1,-1))$, where $g\in G_{k}$ is given by $\alpha =\beta =\text{diag}(2^{-1},1,1), \gamma = \text{diag}(-1,-2^{-1},-2^{-1})$ and $\delta = \text{diag}(1,2^{-1},2^{-1})$. We can construct the octonion algebra corresponding to the orbit of $x$ using the $k(\sqrt{-i})/k$-hermitian form given by $\text{diag}(-1/4,-1,-1)$ on $k(\sqrt{-i})^{3}$, similarly as we did above. In general, for $-i\in k^{\times 2}$ we have $x=-e_{1}e_{2}e_{3}-2(-i)^{1/2}e_{4}e_{5}e_{6}\in U_{i}(k)/G_{k}$ as a representative of the single orbit. We can find an element $g\in G_{k}$ with diagonal $\alpha,\beta,\gamma$ and $\delta$ such that $g.x=(-1,-y_{0},0,B)\in U_{i}(k)$ for some scalar $y_{0}$ and $B$ is a symmetric matrix of order $3$ with $\text{det}(B)\neq 0$ (see \cite{IJ}, p. 1023 for details). Then, we use the non-degenerate ternary $k(\sqrt{-i})/k$-hermitian form defined by $B$ to construct the corresponding octonion, which is always split.

\subsection{The parametrization} Finally summarizing the results (Theorem \ref{q}, Theorem \ref{Q} and Theorem \ref{O}) discussed in this section, we get a parametrization of all composition algebras defined over $k$ in terms of the $Sp_{6}(k)$-orbits in $X_{k}$, which we state in the following result.

\begin{theorem}\label{M}
 \textit{Let $X$ be the $14$-dimensional irreducible representation of $G=Sp_{6}$, defined over the field $k$. We can write $X$ as a disjoint union of $G$-invariant subsets $X=\{0\}\sqcup X_{0}\sqcup X_{1} \sqcup X_{2} \sqcup X_{3}$, where $J^{-1}\{0\}-\{0\}=X_{0}\sqcup X_{1}\sqcup X_{2}$, $X_{3}=\bigsqcup_{i\in k^{\times}}U_{i}$, $U_{i}=J^{-1}\{i\}$ for $i\in k^{\times}$ and each $U_{i}$ is also $G$-invariant. Then,
    \begin{enumerate}
        \item $X_{r}(k)/G_{k}$ has one to one correspondence with the set of all isomorphism classes of composition algebras defined over $k$ with dimension $2^{r}$, for $r=0,1,2$;
        \item $U_{i}(k)/G_{k}$ has one to one correspondence with the set of all isomorphism classes of octonion algebras defined over $k$ containing $k(\sqrt{-i})$ as a composition subalgebra of dimension $2$, under the isomorphisms which fix $k(\sqrt{-i})$ pointwise.
    \end{enumerate}
    In particular, each orbit has a representative that gives us the formula for the norm of the corresponding composition algebra.}
\end{theorem}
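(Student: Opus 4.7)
The plan is essentially to assemble the statement from the component results already proved in this section, so most of the work is bookkeeping rather than new argument. The decomposition $X = \{0\}\sqcup X_{0}\sqcup X_{1}\sqcup X_{2}\sqcup X_{3}$ with $X_{3}=\bigsqcup_{i\in k^{\times}}U_{i}$ and each piece $G$-invariant follows immediately from Igusa's classification of the $G$-orbits in $X-\{0\}$ (the proposition opening this section) together with the fact that $J$ is $G$-invariant, so the level sets $U_{i}$ and $J^{-1}\{0\}$ are unions of orbits. The only nontrivial content is the identification of each orbit space with a class of composition algebras, and this is exactly what Theorems \ref{q}, \ref{Q}, and \ref{O} do for $r=1,2$ and for $U_{i}$.

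For $r=0$, I would argue separately that $X_{0}(k)/G_{k}$ is a single point, corresponding to the unique $1$-dimensional composition algebra $(k,x\mapsto x^{2})$. By Igusa's proposition the Witt condition is satisfied on $X_{0}$, so $X_{0}(k)$ is a single $G_{k}$-orbit; and the only composition algebra of dimension $2^{0}=1$ over $k$ (with $\mathrm{char}(k)\neq 2$) is $k$ itself, so there is nothing to match but these two singletons. For $r=1$, Theorem \ref{q} gives the bijection $X_{1}(k)/G_{k}\leftrightarrow k^{\times}/k^{\times 2}$, and the latter parametrizes the $2$-dimensional composition algebras $k(\sqrt{a})$ up to isomorphism, with the split algebra corresponding to $a\in k^{\times 2}$. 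For $r=2$, Theorem \ref{Q} gives $X_{2}(k)/G_{k}$ in bijection with isometry classes of $2$-fold Pfister forms, which by standard results classify quaternion algebras over $k$.

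For part (2), I would cite Theorem \ref{O} directly: it already yields the bijection between $U_{i}(k)/G_{k}$ and isomorphism classes of octonion algebras containing $k(\sqrt{-i})$ under $k(\sqrt{-i})$-fixing isomorphisms, obtained by matching a normalized orbit representative $(-1,-y_{0},0,\mathrm{diag}(y_{1},y_{2},y_{3}))$ with the ternary $k(\sqrt{-i})/k$-hermitian form $\mathrm{diag}(y_{1},y_{2},y_{3})$ of trivial discriminant (since $y_{1}y_{2}y_{3}=N((1/2)y_{0}+\sqrt{-i})$) and invoking the results of \cite{KMRT} and \cite{TML}. The square case $-i\in k^{\times 2}$ reduces to the split octonion algebra $Zorn(k)$, as discussed.

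The last sentence of the theorem—that each orbit has a representative giving a formula for the norm—is verified case by case from the reductions already established: in $X_{1}(k)$ the representative $-e_{1}e_{2}e_{3}+ae_{1}e_{5}e_{6}$ encodes $k(\sqrt{a})$ with norm $x^{2}-ay^{2}$; in $X_{2}(k)$ the representative with diagonal $A=\mathrm{diag}(a,b,c)$ of determinant one yields the Pfister form $\langle\langle 1/a,1/b\rangle\rangle$ by Lemma \ref{l2}; and in $U_{i}(k)$ the diagonal representative $(-1,-y_{0},0,B)$ gives the octonion $k(\sqrt{-i})\oplus k(\sqrt{-i})^{3}$ with norm $N+h$ described at the end of Section \ref{2}. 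The only place one has to be a bit careful is checking that the normalizations in Proposition \ref{P3.4} and Remark \ref{R3.5} are available over $k$ in each range (not merely over $k(x)$), but this is precisely what the case-by-case reductions used in the proofs of Theorems \ref{q}, \ref{Q}, and \ref{O} already ensure, so no new computation is required.
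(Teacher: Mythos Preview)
Your proposal is correct and follows essentially the same approach as the paper: the paper's own proof is a one-line citation of Theorem~\ref{q}, Theorem~\ref{Q}, Theorem~\ref{O}, and the fact that $X_{0}(k)$ is a single $G_{k}$-orbit, which is precisely the assembly you carry out. Your version is simply more explicit about how each piece fits together and about the norm-formula claim, but there is no substantive difference in strategy.
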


\begin{proof}
    The proof follows from Theorem \ref{q}, Theorem \ref{Q}, Theorem \ref{O} and the fact that $X_{0}(k)$ is a single $G_{k}$-orbit.
\end{proof}

\begin{remark} We can compute the number of composition algebras using the above parametrization, in some special cases.

\begin{enumerate}

    \item If $-i\in k^{\times 2}$, $|U_{i}(k)/G_{k}|=1$. This tells us that there exists a unique octonion algebra containing $k\times k$ as a subalgebra, which is the split octonion algebra.

    \item Let $k$ be an algebraically closed field. In that case, we can get from the computations in the proofs of the previous results (see Theorem \ref{q}, \ref{Q}, \ref{O}), $|X_{1}(k)/G_{k}|=|X_{2}(k)/G_{k}|=|U_{i}(k)/G_{k}|=1, \forall i\in k^{\times}$. Hence, $\exists$ a unique composition algebra over $k$, in each of the dimensions $1,2,4$ and $8$.

    \item Let us take $k=\mathbb{R}$. Then, we can check that $|X_{1}(k)/G_{k}|=2$ (see Theorem \ref{q}), and also $|X_{2}(k)/G_{k}|=2$ (see Theorem \ref{Q}), since there are only two orbits with representatives of the form $(0,0,A,0)\in X_{2}(k)\subset X_{k}$; where $A$ is given by $\text{diag}(-1,-1,1)$ and $\text{diag}(1,1,1)$, respectively. So, there are two composition algebras up to isomorphism in each of the dimensions $2$ and $4$. Again, we can check that $|U_{i}(k)/G_{k}|=2$ (see Theorem \ref{O}), for all $i\in k^{\times}-k^{\times 2}$ and $|k^{\times}/k^{\times 2}|=2$. So, there are exactly two octonion algebras over $k$. 

    \item Let $k$ be a finite field. Then we have $|X_{1}(k)/G_{k}|=2$ (see Theorem \ref{q}). So, the number of quadratic algebras over $k$ is two, up to isomorphism. As $\text{char}(k)\neq 2$, any quadratic form with dimension greater than $2$ is isotropic, and so we have $|X_{2}(k)/G_{k}|=1$ from the calculations in the proof of Theorem \ref{Q}. Similarly, we have $|U_{i}(k)/G_{k}|=1$ (see Theorem \ref{O}), $\forall i\in k^{\times}$. Hence, we have a unique quaternion algebra and octonion algebra defined over $k$.

    \item Let $k$ be an algebraic number field and $r$ be the number of real conjugates of $k$ in which $i>0$, for some fixed $i\in k^{\times}$. Then we can check that there are exactly $2^{r}$ octonion algebras defined over $k$ that contain $k(\sqrt{-i})$ as a subalgebra, up to $k(\sqrt{-i})$-isomorphism (see \cite{IJ}, p. 1028). In this case, $|X_{1}(k)/G_{k}|=|X_{2}(k)/G_{k}|=\infty$ (see \cite{IJ}, p. 1028). So, there are infinitely many composition algebras over $k$, in each of the dimensions $2$ and $4$.

\end{enumerate}

\end{remark}

\section{The orbit spaces in prehomogeneous vector spaces}\label{5}

In this section, we discuss the orbit space decompositions in some prehomogeneous vector spaces. The orbit space in the semi-stable set of a PV parametrizes interesting algebraic structures in many cases, as we have discussed in the Introduction (see Section \ref{I}). We start with the PV's $(Sp_{6}\times GL_{1}, X)$ and $(GSp_{6}\times GL_{1}, X)$, respectively, defined over the field $k$. Here $X$ is the $14$-dimensional irreducible $Sp_{6}$-representation. We give an arithmetic interpretation for the orbit spaces in these two PV's. We also describe the orbit spaces for some other PV's including $(\text{Aut}(C)\times GL_{1},1^{\perp}_{C})$, where $(C,N_{C})$ is any octonion algebra defined over $k$ and $1_{C}\in C$ is the identity.

\subsection{The orbit space of $(\text{Sp}_{\text{6}}\times \text{GL}_{\text{1}}, \text{X})$} 

 Let us consider $X$ as a rational representation of the algebraic group $Sp_{6}\times GL_{1}$ over the field $k$, where the action of $Sp_{6}$ on $X$ is the same as described in Section \ref{3} and the $GL_{1}$-action is given by scalar multiplication. This representation is a PV with the $Sp_{6}$-invariant quartic homogeneous polynomial $J$ as relative invariant and  $X^{ss}=\{x\in X: J(x)\neq 0\}=\bigsqcup_{i\in k^{\times}}U_{i}$ (see Theorem \ref{M}) is the set of all semi-stable points (we will see later that it is a Zariski dense open orbit in $X$). We denote the set of all $k$-rational points in $(Sp_{6}\times GL_{1})$ and $X^{ss}$ by $(Sp_{6}\times GL_{1})(k)$ and $X^{ss}_{k}$, respectively. We show that the $k$-rational orbit space of this PV parametrizes composition algebras, as the $G_{k}$-orbits do in Theorem \ref{M}.

\vspace{2 mm}
 
 \noindent \textbf{The orbits in $(X^{ss}_{k})^{c}$:} We have $(X^{ss})^{c}=J^{-1}\{0\}=\{0\}\sqcup X_{0}\sqcup X_{1}\sqcup X_{2}$ (see Theorem \ref{M}). We can check by direct computations with the $G_{k}$-orbit representatives in $X_{0}(k),X_{1}(k)$ and $X_{2}(k)$ (see Theorem \ref{q} and Theorem \ref{Q}), that for any $a\in GL_{1}(k)$ and $x\in J^{-1}\{0\}(k)-\{0\}$, $ax=g.x$ for some $g\in Sp_{6}(k)$. For example, any $k$-rational $G_{k}$-orbit representative in $X_{2}(k)$ can be taken to be of the form $x=y_{1}e_{4}e_{2}e_{3}+y_{2}e_{1}e_{5}e_{3}+y_{3}e_{1}e_{2}e_{6}$, where $y_{1},y_{2},y_{3}\in k^{\times}$. Let $g\in Sp_{6}(k)$ be the element given by $\beta=\gamma =0$ and $\alpha = aI_{3}, a\in k^{\times}$. Then we get $g.x=ax$. So, the orbit decomposition of $J^{-1}\{0\}(k)-\{0\}$ remains the same (same as in Theorem \ref{M}) under the $(Sp_{6}\times GL_{1})(k)$-action and we have the following result.
 \begin{theorem}\label{5.1}
     \textit{The non-zero orbits in $(X^{ss}_{k})^{c}/(Sp_{6}\times GL_{1})(k)$ are in bijection with the set of all isomorphism classes of composition algebras defined over the field $k$, with dimensions less than or equal to $4$.}

 \end{theorem}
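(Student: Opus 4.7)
My plan is to reduce this statement to Theorem~\ref{M} by showing that the action of the larger group $(Sp_6 \times GL_1)(k)$ on $(X^{ss}_k)^c - \{0\}$ has exactly the same orbits as the action of $Sp_6(k) = G_k$ alone. Once this is in hand, the disjoint decomposition $J^{-1}\{0\} - \{0\} = X_0 \sqcup X_1 \sqcup X_2$ together with the bijections $X_r(k)/G_k \longleftrightarrow \{\text{composition algebras over } k \text{ of dimension } 2^r\}$ from Theorem~\ref{M} immediately yields the claim, since composition algebras of dimension at most $4$ are precisely those of dimensions $1, 2$ and $4$.

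The core of the argument is therefore the assertion that the scaling action of $k^\times$ is already absorbed by $G_k$ on each of the three strata $X_r(k)$. By $Sp_6$-equivariance, it suffices to prove the following for a single chosen representative $x$ of each $G_k$-orbit in $X_0(k), X_1(k)$ and $X_2(k)$: for every $a \in k^\times$ there exists $g \in G_k$ with $g \cdot x = ax$; indeed, an arbitrary $y = h \cdot x$ then satisfies $ay = h(ax) = hg \cdot x = (hgh^{-1}) \cdot y \in G_k \cdot y$. I would carry out this verification using the explicit representatives produced in the proofs of Theorems~\ref{q} and \ref{Q}, namely $-e_1 e_2 e_3$ for $X_0$, a form $-e_1 e_2 e_3 + a' e_1 e_5 e_6$ for $X_1$, and $y_1 e_4 e_2 e_3 + y_2 e_1 e_5 e_3 + y_3 e_1 e_2 e_6$ for $X_2$. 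In each case one can pick $g \in G_k$ with $\beta = \gamma = 0$ and $\alpha$ diagonal, so that the transformation formulas
\[
x_0 \longmapsto \det(\alpha)\, x_0, \qquad A \longmapsto \det(\alpha)\, \delta A \delta^t, \qquad B \longmapsto \det(\delta)\, \alpha B \alpha^t
\]
recorded in the proof of Proposition~\ref{P3.4} deliver the required scaling. Concretely, $\alpha = \mathrm{diag}(a, 1, 1)$ together with $\delta = (\alpha^t)^{-1}$ handles both $X_0$ (where only the $x_0$ coordinate is non-zero) and $X_1$ (where one additionally checks that $B_{11}$ is multiplied by $a$), while $\alpha = a I_3$ handles $X_2$ (where $A \mapsto a^3 \cdot a^{-2} A = a A$).

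I do not anticipate a serious obstacle. The chosen representatives in $X_0(k), X_1(k), X_2(k)$ are sparse in the $(x_0, y_0, A, B)$-coordinates, so verifying absorption of the $GL_1$ factor reduces to matching two or three scalars against the transformation formulas. The only point requiring genuine care is compatibility of the scalings on the various non-zero coordinates of the chosen representative (for instance, on $X_1$ the scalings on $x_0$ and on $B_{11}$ must \emph{simultaneously} equal $a$), which forces the specific shapes of $\alpha$ listed above rather than allowing an arbitrary diagonal choice.
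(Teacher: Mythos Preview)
Your proposal is correct and follows essentially the same approach as the paper: the paper also reduces to Theorem~\ref{M} by checking, on the explicit $G_k$-orbit representatives in $X_0(k),X_1(k),X_2(k)$, that the $k^\times$-scaling is already realized by an element of $Sp_6(k)$ with $\beta=\gamma=0$ and diagonal $\alpha$ (the paper spells out only the $X_2$ case with $\alpha=aI_3$, exactly your choice). Your write-up is in fact slightly more detailed, supplying the specific $\alpha$ for $X_0$ and $X_1$ and the conjugation step that transports the absorption from the representative to an arbitrary orbit element.
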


 \begin{proof}
     As we have discussed above, the orbit decomposition in $(X^{ss}_{k})^{c}$ under the $(Sp_{6}\times GL_{1})(k)$-action remains the same as under the $Sp_{6}(k)$-action. In particular, $$X_{r}(k)/(Sp_{6}\times GL_{1})(k)\longleftrightarrow X_{r}(k)/Sp_{6}(k),$$ for $r=0,1,2$. Hence, we get the bijection from Theorem \ref{M}.
 \end{proof}

 \vspace{2 mm}

 \noindent \textbf{The orbits in $X^{ss}_{k}$:} In this case, with the $k$-rational orbit representatives in Theorem \ref{O} we can check that the orbit decomposition in each $U_{i}(k)\subset X^{ss}_{k}$ for $i\in k^{\times}$, remains the same (same as in Theorem \ref{M}) as well, under the new action. Let $$x=-e_{1}e_{2}e_{3}-y_{0}e_{4}e_{5}e_{6}+y_{1}e_{1}e_{5}e_{6}+y_{2}e_{4}e_{2}e_{6}+y_{3}e_{4}e_{5}e_{3}\in U_{i}(k)$$ be a representative of a $G_{k}$-orbit in $U_{i}(k)$. Then $J(x)=i\in k^{\times}$ and for any $a\in GL_{1}(k)$ we have $J(ax)=a^{4}i$. We consider the following two cases:
 
 \begin{enumerate}
     \item \textbf{$a^{4}\neq 1$:} In this case, $ax\in U_{a^{4}i}(k)$. Since $k(\sqrt{-a^{4}i})=k(\sqrt{-i})$ and so $U_{a^{4}i}(k)/G_{k}\longleftrightarrow U_{i}(k)/G_{k}$ (see Theorem \ref{O}), the $GL_{1}(k)$-action identifies $U_{a^{4}i}(k)/G_{k}$ with $U_{i}(k)/G_{k}$ without disturbing the orbit decomposition in $U_{i}(k)$ under the $G_{k}$-action.
     \item \textbf{$a^{4}=1$:} In this case, $ax\in U_{i}(k)$ and $U_{i}(k)$ is a single orbit if $-i\in k^{\times 2}$. So, we may assume that $-i\in k^{\times}-k^{\times 2}$. If $a=1$, $ax=x\in U_{i}(k)$. If $a=-1$, then $ax=g.x\in U_{i}(k)$, where $g=\text{diag}(-1,-1,-1,-1,-1,-1)\in Sp_{6}(k)$. Let $a\in k^{\times}$ be such that $a^{2}=-1$. Then we can check that the non-degenerate ternary $k(\sqrt{-i})/k$-hermitian forms corresponding to $x$ and $ax$ (see the proof of Theorem \ref{O}), both are isometric to the hermitian form given by the matrix $\text{diag}(y_{1},y_{2},y_{3})$. So, $x$ and $ax$ must be $G_{k}$-equivalent, and $\exists $ $g\in Sp_{6}(k)$ such that $ax=g.x\in U_{i}(k)$.

 \end{enumerate}

 Hence, the $(Sp_{6}\times GL_{1})(k)$-action on $X^{ss}_{k}$ identifies $U_{a^{4}i}(k)$ with $U_{i}(k)$ for $i,a\in k^{\times}$, without disturbing the orbit decomposition in $U_{i}(k)$ under the $Sp_{6}(k)$-action. So, we have the following result.

 \begin{theorem}\label{5.2}
      \textit{$X^{ss}_{k}/(Sp_{6}\times GL_{1})(k)=\bigsqcup_{i\in k^{\times}/k^{\times 4}}U_{i}(k)/G_{k}$. So, the parametrization of the octonion algebras in Theorem \ref{M} gives us an arithmetic interpretation of the orbit space $X^{ss}_{k}/(Sp_{6}\times GL_{1})(k)$.}
 \end{theorem}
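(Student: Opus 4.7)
The plan is to formalize the case analysis sketched in the paragraphs preceding the theorem into a clean two-step argument: first, identify how the scalar factor $GL_{1}(k)$ permutes the level sets $U_{i}(k)$; second, check that within each resulting class of level sets the $Sp_{6}(k)$-orbit decomposition survives unchanged.

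First I would observe that since $J$ is $Sp_{6}$-invariant and homogeneous of degree $4$, we have $J((g,a)\cdot x) = a^{4}J(x)$, so $(g,a) \in (Sp_{6}\times GL_{1})(k)$ sends $U_{i}(k)$ into $U_{a^{4}i}(k)$. Consequently each $(Sp_{6}\times GL_{1})(k)$-orbit is contained in $\bigsqcup_{[j]=[i]} U_{j}(k)$ for a unique class $[i] \in k^{\times}/k^{\times 4}$, and there is a natural surjection of the orbit space onto $k^{\times}/k^{\times 4}$. For a fixed coset $[i]$ the inclusion $U_{i}(k)\hookrightarrow X^{ss}_{k}$ induces a map $U_{i}(k)/Sp_{6}(k) \to X^{ss}_{k}/(Sp_{6}\times GL_{1})(k)$ onto the fiber over $[i]$; surjectivity of this map is immediate by scaling any $y \in U_{a^{4}i}(k)$ by $a^{-1}$ to land in $U_{i}(k)$.

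For injectivity I would argue: if $x,x'\in U_{i}(k)$ satisfy $x' = (g,a)\cdot x$, then $i = J(x') = a^{4}i$, so $a \in \mu_{4}(k)$. The theorem therefore reduces to the key claim that for every $a\in \mu_{4}(k)$ and every $y \in U_{i}(k)$, the point $ay$ lies in the $Sp_{6}(k)$-orbit of $y$. The cases $a = \pm 1$ are trivial: take $\pm I_{6}\in Sp_{6}(k)$. The substantive case is $a^{2} = -1$, which presupposes $\sqrt{-1}\in k$. If $-i \in k^{\times 2}$ then $U_{i}(k)$ is already a single $Sp_{6}(k)$-orbit by the square-case remark following Theorem \ref{O}, so there is nothing to check. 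Otherwise I would normalize $y$ to $(-1,-y_{0},0,\mathrm{diag}(y_{1},y_{2},y_{3}))$ via Proposition \ref{P3.4} and Remark \ref{R3.5}, compute $ay$, and then apply the element of $Sp_{6}(k)$ with $\alpha = \mathrm{diag}(a^{-1},1,1)$, $\delta = \mathrm{diag}(a,1,1)$, $\beta=\gamma=0$ (which lies in $Sp_{6}$ since $\alpha\delta^{t}=I_{3}$) and the transformation rules from the proof of Proposition \ref{P3.4} to obtain the equivalent normal form $(-1,y_{0},0,\mathrm{diag}(y_{1},-y_{2},-y_{3}))$.

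At this point I would invoke Theorem \ref{O}: the points $y$ and $(-1,y_{0},0,\mathrm{diag}(y_{1},-y_{2},-y_{3}))$ are $Sp_{6}(k)$-equivalent if and only if the ternary $k(\sqrt{-i})/k$-hermitian forms $\langle y_{1},y_{2},y_{3}\rangle$ and $\langle y_{1},-y_{2},-y_{3}\rangle$ are isometric. Since $a\in k^{\times}$ is fixed by the non-trivial involution of $k(\sqrt{-i})/k$, its hermitian norm equals $a^{2}=-1$, so scaling the last two coordinates by $a$ realizes the required isometry, and the corresponding octonions are isomorphic under an isomorphism fixing $k(\sqrt{-i})$ pointwise. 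This closes the key claim and hence the injectivity. The arithmetic interpretation then follows immediately from Theorem \ref{M}(2). The main obstacle is precisely the $a^{2}=-1$ subcase, where one must combine the explicit rescaling in $Sp_{6}$ with the cohomological content of Theorem \ref{O} to convert the orbit question into a concrete isometry of ternary hermitian forms; every other piece of the argument is routine.
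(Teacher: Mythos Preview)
Your proposal is correct and follows essentially the same approach as the paper: both arguments split into the observation that $J$ is homogeneous of degree $4$ (so scaling by $a$ sends $U_{i}$ to $U_{a^{4}i}$), handle $a=\pm 1$ via $\pm I_{6}\in Sp_{6}(k)$, and for $a^{2}=-1$ reduce to an isometry of the ternary $k(\sqrt{-i})/k$-hermitian forms via Theorem~\ref{O}. Your write-up is somewhat more explicit---you actually compute the normal form $(-1,y_{0},0,\mathrm{diag}(y_{1},-y_{2},-y_{3}))$ of $ay$ and exhibit the isometry $\langle y_{1},y_{2},y_{3}\rangle\simeq\langle y_{1},-y_{2},-y_{3}\rangle$ by noting that $-1=N_{k(\sqrt{-i})/k}(a)$---whereas the paper simply asserts that ``we can check'' the hermitian forms attached to $x$ and $ax$ are both isometric to $\mathrm{diag}(y_{1},y_{2},y_{3})$; but this is a difference in detail, not in strategy.
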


\begin{proof}
  From the above discussion, we find that the $(Sp_{6}\times GL_{1})(k)$-action identifies $U_{ia^{4}}(k)$ with $U_{i}(k)$ and the orbit decomposition in $U_{i}(k)$ remains the same as in the $Sp_{6}(k)$-action, for $i,a\in k^{\times}$. So, we get $$X^{ss}_{k}/(Sp_{6}\times GL_{1})(k)=\bigsqcup_{i\in k^{\times}/k^{\times 4}}U_{i}(k)/G_{k}.$$ Now, each $U_{i}(k)/G_{k}$ parametrizes all octonion algebras containing $k(\sqrt{-i})$ as a subalgebra (see Theorem \ref{M}). Hence, the result follows.
\end{proof}

\vspace{2 mm}

\noindent \textbf{$(\text{Sp}_{\text{6}}\times \text{GL}_{\text{1}}, \text{X})$ is a PV:} The above calculations for the orbits in $X^{ss}_{k}$ show that $X^{ss}$ is a single orbit over $\overline{k}$ (the algebraic closure of $k$), as $U_{i}$'s are single $G$-orbits if $-i$ is a square and $ij^{-1}\in \overline{k}^{\times 4},\forall i,j\in \overline{k}^{\times}$. So, this representation is an irreducible PV with $J$ as a relative invariant.


\begin{remark} 
The relative invariant $J$ defines a projective variety $X_{J}$ in the projective space $\mathbb{P}(X)$. Now, for $ x\in X_{k}, a\in k^{\times}$ we have $ax\in \mathcal{O}(x)$, the $(Sp_{6}\times GL_{1})(k)$-orbit of $x$. So, the above results give us an interpretation for the $k$-rational orbit spaces in the projective variety $X_{J}$ and its complement $X^{c}_{J}\subset \mathbb{P}(X)$, under the $(Sp_{6}\times GL_{1})$-action. As a result, we get an arithmetic interpretation for the $k$-rational orbit space in the projective space $\mathbb{P}(X)$.

\end{remark}

\subsection{The orbit space of $(\text{GSp}_{\text{6}}\times \text{GL}_{\text{1}}, \text{X})$}

Let $GSp_{6}\simeq GL_{1}\ltimes Sp_{6}$ be the group of all similitudes of the standard non-degenerate alternating bilinear form defined on the off-diagonal subspace $V\subset Zorn(k)$, over the field $k$. Then $(GSp_{6}\times GL_{1},X)$ is also a PV over $k$ with the same relative invariant $J$, where the action of $GL_{1}$ is given by scalar multiplication and $GSp_{6}$ has its induced action on $X\subset \wedge^{3}V$. We have $X^{ss}=\{x\in X:J(x)\neq 0\}=\bigsqcup_{i\in k^{\times}}U_{i}$ as the semi-stable set, and we will see that it is a Zariski dense open orbit in $X$. We can check $GSp_{6}\simeq H_{1}\ltimes Sp_{6}$, where \[H_{1}=\biggl\{h_{a}=\left(\begin{array}{cc}
aI_{3} & 0 \\
0 & I_{3}
\end{array}\right)\in GSp_{6}:a\in GL_{1}\biggl\}\simeq GL_{1}.\] We denote the set of all $k$-rational points in $(GSp_{6}\times GL_{1})$ and $X^{ss}$ by $(GSp_{6}\times GL_{1})(k)$ and $X^{ss}_{k}$, respectively. Now, we proceed to the orbit space decomposition of this representation.

\vspace{2 mm}

\noindent \textbf{The orbits in $(X^{ss}_{k})^{c}$:} In this case, we also have $(X^{ss})^{c}=J^{-1}\{0\}=\{0\}\sqcup X_{0}\sqcup X_{1}\sqcup X_{2}$ (see Theorem \ref{M}). We can check by direct computations that for $x\in J^{-1}\{0\}(k)-\{0\}$ and $h_{a}\in H_{1}(k) \text{ with } a\in GL_{1}(k)$, $\exists$ $g\in Sp_{6}(k)$ such that $h_{a}.x=g.x$. For example, let us take $x=y_{1}e_{4}e_{2}e_{3}+y_{2}e_{1}e_{5}e_{3}+y_{3}e_{1}e_{2}e_{6}$ as a $G_{k}$-orbit representative in $X_{2}(k)$, where $y_{1},y_{2},y_{3}\in k^{\times}$. Then, $h_{a}.x=a^{2}x=g.x$, where $g=\text{diag}(a^{2},a^{2},a^{2},a^{-2},a^{-2},a^{-2})\in Sp_{6}(k)=G_{k}$. We can also check that for any $a\in GL_{1}(k)$ and $x\in J^{-1}\{0\}(k)-\{0\}$, $\exists$ $h\in G_{k}$ such that $ax=h.x$; similarly as we did for the PV $(Sp_{6}\times GL_{1},X)$. Hence, the orbit decomposition of $J^{-1}\{0\}(k)-\{0\}$ remains the same (same as in Theorem \ref{M}) and we have the following result.

\begin{theorem}\label{5.3}
    The non-zero orbits in $(X^{ss}_{k})^{c}/(GSp_{6}\times GL_{1})(k)$ are in bijection with the set of all isomorphism classes of composition algebras defined over the field $k$, with dimensions less than $8$.
\end{theorem}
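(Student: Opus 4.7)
The plan is to reduce Theorem \ref{5.3} to Theorem \ref{5.1} and Theorem \ref{M} by exploiting the semidirect decomposition $GSp_6 \simeq H_1 \ltimes Sp_6$ recorded just before the statement. Concretely, I want to prove that every $(GSp_6 \times GL_1)(k)$-orbit in $(X^{ss}_k)^c - \{0\}$ coincides with the corresponding $Sp_6(k)$-orbit; after that, the bijection with composition algebras of dimensions $1, 2, 4$ is immediate from Theorem \ref{M}.

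Since any element of $(GSp_6 \times GL_1)(k)$ factors as $(h_a \cdot g, t)$ with $h_a \in H_1(k) \simeq k^\times$, $g \in Sp_6(k)$, and $t \in k^\times$, and since Theorem \ref{5.1} already absorbs the $GL_1(k)$-scaling factor into $Sp_6(k)$, the only new content to verify is the claim that $h_a \cdot y \in Sp_6(k) \cdot y$ for every $y$ in each of $X_0(k), X_1(k), X_2(k)$. As each $X_r$ is $Sp_6$-invariant, it suffices to check this on one convenient representative per $Sp_6(k)$-orbit.

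The computational key is that $h_a = \mathrm{diag}(a, a, a, 1, 1, 1)$ acts on a basis monomial $e_i e_j e_l \in \wedge^3 V$ by the scalar $a^{r}$, where $r = |\{i,j,l\} \cap \{1,2,3\}|$. I would therefore pick representatives whose nonzero terms all share a common value of $r$: the standard $-e_1 e_2 e_3$ for $X_0$ (uniform $r = 3$); the Igusa representative $e_1 e_4 e_3 + e_5 e_2 e_3$ for $X_1$ (uniform $r = 2$); and the diagonal form $y_1 e_4 e_2 e_3 + y_2 e_1 e_5 e_3 + y_3 e_1 e_2 e_6$ with $y_i \in k^\times$ for $X_2$ (uniform $r = 2$). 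On such a representative $y$, the action of $h_a$ reduces to multiplication by $a^{r}$, and Theorem \ref{5.1} guarantees this scalar action is realized by some element of $Sp_6(k)$. Hence $h_a \cdot y \in Sp_6(k) \cdot y$, completing the reduction.

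The main, albeit mild, obstacle is representative-dependence in the last step: on an alternate $X_1$-representative such as $-e_1 e_2 e_3 + b \, e_1 e_5 e_6$, the element $h_a$ mixes the scalings $a^3$ and $a$, so the direct scalar argument fails and one must first reduce the given $y$ to a uniform-$r$ representative via $Sp_6(k)$. Beyond this care point, no further arithmetic input is needed, and the theorem drops out by combining with Theorem \ref{M}.
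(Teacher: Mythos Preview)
Your proposal is correct and follows essentially the same approach as the paper: both arguments use the decomposition $GSp_6 \simeq H_1 \ltimes Sp_6$, verify on explicit orbit representatives that $h_a$ acts by a scalar which can be absorbed into $Sp_6(k)$ (the paper even writes down the same $X_2$-representative and the explicit element $g=\mathrm{diag}(a^2,a^2,a^2,a^{-2},a^{-2},a^{-2})$), and then invoke the analogue for the $GL_1$-factor together with Theorem~\ref{M}. Your ``obstacle'' about representative-dependence is in fact a non-issue, since $Sp_6$ is normal in $GSp_6$; once $h_a\cdot y\in Sp_6(k)\cdot y$ holds for one representative $y$ of an $Sp_6(k)$-orbit, conjugating shows it holds for every element of that orbit.
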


\begin{proof}
    From the discussion above, we find that the orbit space in $(X^{ss}_{k})^{c}$ under the $(GSp_{6}\times GL_{1})(k)$-action remains the same as in the $Sp_{6}(k)$-action. So, in this case also $$X_{r}(k)/(GSp_{6}\times GL_{1})(k)\longleftrightarrow X_{r}(k)/Sp_{6}(k),$$ for $r=0,1,2$ and we have the bijection, as claimed (see Theorem \ref{M}). 
\end{proof}

\vspace{2 mm}

\noindent \textbf{The orbits in $X^{ss}_{k}$:} Let $x\in U_{i}(k)\subset X^{ss}_{k}$ for some $i\in k^{\times}$, i.e., $J(x)=i$. Then we can easily see that $J(h_{a}.x)=a^{6}i$, for $h_{a}\in H_{1}(k),a\in k^{\times}$. If we take $g=(h_{a},a^{-1})\in (GSp_{6}\times GL_{1})(k)$, then we get $J(g.x)=a^{2}J(x)$ and proceeding similarly as we did for $(Sp_{6}\times GL_{1},X)$, we can check that the orbit decomposition in each $U_{i}(k)$ for $ i \in k^{\times }$, remains the same (same as in Theorem \ref{M}), only $U_{i}(k)$ gets identified with $U_{i a^{2}}(k)$ for $i,a\in k^{\times}$. So, we get $$X^{ss}_{k}/(GSp_{6}\times GL_{1})(k)=\bigsqcup_{i\in k^{\times}/k^{\times 2}}U_{i}(k)/G_{k}.$$ Let $Ex(2)$ denote the set of all isomorphism classes of quadratic algebras defined over $k$, i.e., $Ex(2)\longleftrightarrow X_{1}(k)/(GSp_{6}\times GL_{1})(k)$. We define a map $$\gamma_{X}:X^{ss}_{k}/(GSp_{6}\times GL_{1})(k)=\bigsqcup_{i\in k^{\times}/k^{\times 2}}U_{i}(k)/G_{k}\rightarrow Ex(2),$$ by $\gamma_{X}(\mathcal{O}(x))=k(\sqrt{-i})=k[t]/\langle t^{2}+i\rangle $, for $\mathcal{O}(x)\in U_{i}(k)/G_{k}$. Then $\gamma_{X}$ is surjective and we have the following result.

\begin{theorem}\label{5.4}
    Let $\gamma_{X}:X^{ss}_{k}/(GSp_{6}\times GL_{1})(k)\rightarrow Ex(2)$ be the map as defined above. This map has the following properties:

    \begin{enumerate}
        \item $\gamma_{X}^{-1}(k\times k)=(GSp_{6}\times GL_{1})(k).(e_{1}e_{2}e_{3}+e_{4}e_{5}e_{6})$.
        \item If $K/k$ is a quadratic extension over $k$, $\gamma_{X}^{-1}(K)$ is in bijection with the set of all isomorphism classes of octonion algebras containing $K$ as a subalgebra, under isomorphisms which fix $K$ pointwise.
    \end{enumerate}
\end{theorem}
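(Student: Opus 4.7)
The plan is to unpack the decomposition
\[
X^{ss}_k/(GSp_6 \times GL_1)(k) \;=\; \bigsqcup_{i \in k^\times/k^{\times 2}} U_i(k)/G_k
\]
established just above the statement, together with the fact that $k(\sqrt{-i}) = k[t]/\langle t^2+i\rangle$ depends only on the class of $i$ modulo squares. This turns the induced map $k^\times/k^{\times 2} \to Ex(2)$, $i\cdot k^{\times 2} \mapsto k(\sqrt{-i})$, into a bijection, so each fiber of $\gamma_X$ collapses to a \emph{single} piece $U_i(k)/G_k$ of the decomposition. Both parts of the theorem then reduce to understanding that single piece.

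For part (1), the split algebra $k\times k$ corresponds to the class of $i$ with $-i\in k^{\times 2}$. By the ``square case'' discussion just before Theorem \ref{M}, $G_k$ already acts transitively on $U_i(k)$ whenever $-i\in k^{\times 2}$, so $\gamma_X^{-1}(k\times k)$ is automatically a single $(GSp_6\times GL_1)(k)$-orbit. To identify it as the orbit through $e_1 e_2 e_3 + e_4 e_5 e_6$, I would evaluate the quartic invariant $J$ at that point via the formula
\[
J(x) \;=\; x_0 N(B) + y_0 N(A) + T(A^{\#}, B^{\#}) - \tfrac{1}{4}\bigl(x_0 y_0 - T(A,B)\bigr)^2
\]
from Section \ref{3}: plugging in $(x_0, y_0, A, B) = (-1,-1,0,0)$ yields $J = -1/4$, whence $-J = 1/4 \in k^{\times 2}$, so $e_1 e_2 e_3 + e_4 e_5 e_6$ lies in $\gamma_X^{-1}(k\times k)$ and therefore spans that single orbit.

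For part (2), write $K = k(\sqrt{d})$ with $d \in k^\times - k^{\times 2}$ and set $i_0 = -d$. Then $\gamma_X^{-1}(K) = U_{i_0}(k)/G_k$ as a single piece of the decomposition, and part (2) of Theorem \ref{M} provides an explicit bijection between $U_{i_0}(k)/G_k$ and the set of isomorphism classes of octonion algebras containing $K = k(\sqrt{-i_0})$ as a $2$-dimensional composition subalgebra, under isomorphisms fixing $K$ pointwise. Composing with the identification $\gamma_X^{-1}(K) = U_{i_0}(k)/G_k$ yields the claim.

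The argument is essentially a bookkeeping exercise on top of Theorem \ref{M}; the only step requiring genuine care is verifying that the identification $U_{ia^2}(k)/G_k \leftrightarrow U_i(k)/G_k$ induced by the $(GSp_6\times GL_1)(k)$-action is compatible with the octonion-with-subalgebra parametrization, so that different representatives of a single square class produce the same set of octonions up to $K$-isomorphism. This should follow from the character identity $J(g.x) = a^2 J(x)$ for $g = (h_a, a^{-1})$ derived just above the statement, together with the intrinsic nature of the $(K,h)$-construction of the octonion in the proof of Theorem \ref{M}, but it is the main place where the bookkeeping must be executed with care.
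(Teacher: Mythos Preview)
Your proposal is correct and follows essentially the same approach as the paper: both arguments use the decomposition $X^{ss}_k/(GSp_6\times GL_1)(k)=\bigsqcup_{i\in k^{\times}/k^{\times 2}}U_i(k)/G_k$, compute $J(e_1e_2e_3+e_4e_5e_6)=-1/4$ to place that element in the square-class fiber (where $U_i(k)$ is a single $G_k$-orbit by the square case), and for part~(2) invoke Theorem~\ref{O} directly. The extra compatibility check you flag at the end is already subsumed in the paper's derivation of the decomposition preceding the statement, so it does not need separate verification here.
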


\begin{proof}
    $(1)$ This part follows very easily. We have $J(e_{1}e_{2}e_{3}+e_{4}e_{5}e_{6})=-1/4$ and $-(-1/4)\in k^{\times 2}$. So, $\mathcal{O}(e_{1}e_{2}e_{3}+e_{4}e_{5}e_{6})\mapsto k\times k$. Conversely, if $\mathcal{O}(x)\mapsto k\times k$ for some $x\in X^{ss}_{k}$, then $-J(x)\in k^{\times 2}$ and $J(x).k^{\times 2}=(-1/4).k^{\times 2}\in k^{\times}/k^{\times 2}$. Again, $U_{i}(k)/G_{k}$ is a singleton set if $-i\in k^{\times 2}$. Hence, by the above calculations for the orbits in $X^{ss}_{k}$ we have $x\in \mathcal{O}(e_{1}e_{2}e_{3}+e_{4}e_{5}e_{6})$. Therefore, $\gamma^{-1}_{X}(k\times k)=(GSp_{6}\times GL_{1})(k).(e_{1}e_{2}e_{3}+e_{4}e_{5}e_{6})$.
    
    \vspace{1 mm}

    $(2)$ In this case, we have $K\simeq k(\sqrt{-i})$, for some $-i\in k^{\times}-k^{\times 2}$ (as char$(k)\neq 2$). The proof follows from the orbit space decomposition $X^{ss}_{k}/(GSp_{6}\times GL_{1})(k)=\bigsqcup_{i\in k^{\times}/k^{\times 2}}U_{i}(k)/G_{k}$ and Theorem \ref{O}.
\end{proof}

It is very easy to see that the above theorem is an alternative statement of the main theorem in (\cite{YA1}; Theorem 1.10, Theorem 4.11), due to A. Yukie. The identity components of the stabilizers of the orbit representatives can also be computed to be the same, as mentioned by Yukie in his result. So, we get an alternate proof of Yukie's result, using the orbit decomposition of the representation $(Sp_{6}(k),X_{k})$, and this gives us an arithmetic interpretation of the $(GSp_{6}\times GL_{1})(k)$-orbits in the semi-stable set $X^{ss}_{k}$ in terms of the octonion algebras defined over $k$.

\vspace{2 mm}

\noindent \textbf{$(\text{GSp}_{\text{6}}\times \text{GL}_{\text{1}}, \text{X})$ is a PV:} Similar to the case of $(Sp_{6}\times GL_{1},X)$, from the above computations it follows that $X^{ss}$ is a single orbit over $\overline{k}$ and we have $J$ as the relative invariant. Hence, this representation is an irreducible PV.

\vspace{3 mm}

 We have a canonical map $$\gamma_{ss} : X^{ss}_{k}/(GSp_{6}\times GL_{1})(k)\rightarrow \{\text{Isomorphism classes of octonion algebras over } k \},$$ as each orbit represents a unique octonion algebra over $k$. Clearly, $\gamma_{ss} $ is surjective and if we take the isomorphism class $[C]_{k}$ of an octonion algebra $(C,N_{C})$ over $k$, then $\gamma_{ss}^{-1}\{[C]_{k}\}=\{\mathcal{O}((-1,-y_{0},0,\text{diag}(y_{1},y_{2},y_{3})))\in U_{i}(k)/G_{k}:i\in k^{\times}/k^{\times 2}, k(\sqrt{-i})\subset C $ and $N_{C}\simeq N\perp h $, where N is the norm on $k(\sqrt{-i})$ and $h$ represents the trace form of the non-degenerate ternary $k(\sqrt{-i})/k$-hermitian form defined by the matrix $\text{diag}(y_{1},y_{2},y_{3})\}$. From this one can easily see the following result.

\begin{theorem}
    $\gamma_{ss}^{-1}\{[C]_{k}\}\longleftrightarrow \{$Isomorphism classes of quadratic subalgebras of the octonion algebra $C\}$.
\end{theorem}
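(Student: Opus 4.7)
The plan is to build the bijection stratum-by-stratum: assign to each orbit in the fiber the isomorphism class of the quadratic subalgebra $k(\sqrt{-i})$ determined by the $U_{i}(k)/G_{k}$-stratum it inhabits. Concretely, I would first define $\Phi \colon \gamma_{ss}^{-1}\{[C]_{k}\} \to \{\text{isomorphism classes of quadratic subalgebras of } C\}$ by sending $\mathcal{O}(x) \in U_{i}(k)/G_{k}$ to $[k(\sqrt{-i})]$. Well-definedness follows from Theorem \ref{O}: the octonion associated to $\mathcal{O}(x)$ contains $k(\sqrt{-i})$ as a composition subalgebra, and since this octonion is $k$-isomorphic to $C$, transporting the subalgebra along such an isomorphism produces a quadratic subalgebra of $C$ in the class $[k(\sqrt{-i})]$.

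For surjectivity, I would start from an arbitrary quadratic subalgebra $F \subset C$, write $F \simeq k(\sqrt{-i})$ for a unique $i \in k^{\times}/k^{\times 2}$ (since $\mathrm{char}(k) \neq 2$), and invoke Theorem \ref{O} to obtain an orbit in $U_{i}(k)/G_{k}$ whose underlying octonion is $k$-isomorphic to $C$; this orbit sits in the fiber and is sent under $\Phi$ to $[F]$.

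The hard part will be injectivity. If $\Phi(\mathcal{O}(x)) = \Phi(\mathcal{O}(x'))$, then $k(\sqrt{-i}) \simeq k(\sqrt{-i'})$ as $k$-algebras forces $i \cdot k^{\times 2} = i' \cdot k^{\times 2}$, so the two orbits must lie in the same stratum $U_{i}(k)/G_{k}$. Unpacking Theorem \ref{O}, the portion of $U_{i}(k)/G_{k}$ lying in $\gamma_{ss}^{-1}\{[C]_{k}\}$ identifies naturally with $\text{Emb}(k(\sqrt{-i}), C)/\text{Aut}(C)$, where $\text{Aut}(C)$ acts on embeddings by post-composition. The main obstacle is then to show that this quotient is a singleton, i.e., that $\text{Aut}(C)$ acts transitively on $\text{Emb}(k(\sqrt{-i}), C)$. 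I plan to deduce this from the Skolem-Noether-type theorem for composition algebras (see \cite{SV}, Chapter 1), which asserts that any isomorphism between two composition subalgebras of an octonion algebra extends to a global automorphism; applying this to $\iota_{2} \circ \iota_{1}^{-1}$ between $\iota_{1}(k(\sqrt{-i}))$ and $\iota_{2}(k(\sqrt{-i}))$ produces a $\phi \in \text{Aut}(C)$ with $\phi \circ \iota_{1} = \iota_{2}$. Should such a reference be unavailable, a self-contained alternative would be to show that the ternary hermitian datum on $\iota(k(\sqrt{-i}))^{\perp}$ built in the proof of Theorem \ref{O} depends only on $(C, N_{C})$ and the isomorphism class of the subalgebra, forcing any two embeddings to yield the same orbit.
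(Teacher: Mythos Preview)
Your proposal is correct and follows the same route the paper takes: map each orbit in the fiber to the isomorphism class of its quadratic subalgebra $k(\sqrt{-i})$ via the stratification $X^{ss}_{k}/(GSp_{6}\times GL_{1})(k)=\bigsqcup_{i\in k^{\times}/k^{\times 2}}U_{i}(k)/G_{k}$ together with the description of $\gamma_{ss}^{-1}\{[C]_{k}\}$ given just before the theorem. The paper's own proof is a one-line reference to those two ingredients; your version usefully makes explicit the injectivity step via the Skolem--Noether-type extension theorem for composition subalgebras (\cite{SV}, Chapter~1, Corollary~1.7.3), which the paper leaves implicit here but does invoke elsewhere (e.g., in the proof of Theorem~\ref{5.7}).
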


\begin{proof}
    The proof follows from the above description of $\gamma_{ss}^{-1}\{[C]_{k}\}$ and the orbit space decomposition $X^{ss}_{k}/(GSp_{6}\times GL_{1})(k)=\bigsqcup_{i\in k^{\times}/k^{\times 2}}U_{i}(k)/G_{k}$.
\end{proof}

\vspace{2 mm}

 \noindent \textbf{A cohomological interpretation:} Let $C$ be an octonion algebra over $k$ and $F\subset C$ a subalgebra. By $\text{Aut}(C/F)$ we denote the group of algebra automorphisms of $C$ fixing $F$ pointwise and $\text{Aut}(C,F)$ be the group of algebra automorphisms mapping $F$ to $F$. If $k(\sqrt{-i})\subset C$ for some $i\in k^{\times}$, it is easy to check that $$\text{Aut}(C,k(\sqrt{-i}))\simeq \text{Aut}(C/k(\sqrt{-i}))\rtimes \mathbb{Z}_{2}\simeq SU(h)\rtimes \mathbb{Z}_{2}$$ (see \cite{JN}, Theorem $3$), where $h$ is a non-degenerate trivial discriminant $k(\sqrt{-i})/k$-hermitian form of rank $3$, defined on $k(\sqrt{-i})^{\perp}\subset C$. The cohomology set $H^{1}(k, \text{Aut}(C,k(\sqrt{-i})))$ classifies the tuples $(C_{1},F_{1})$ up to isomorphism, where $C_{1}$ is an octonion and $F_{1}\subset C_{1}$ is a composition subalgebra of dimension $2$. Here, an isomorphism between $(C_{1},F_{1})$ and $(C_{2},F_{2})$ means that there is an isomorphism $f:C_{1} \to C_{2} $ such that $f(F_{1})=F_{2}$. From the above discussion and previous observations we get the following result.

\begin{theorem}\label{5.6}
    We have $X-\{0\}=X_{0}\sqcup X_{1}\sqcup X_{2}\sqcup X^{ss}$. 
    
    \begin{enumerate}
        \item $X_{r}(k)/(GSp_{6}\times GL_{1})(k)\longleftrightarrow H^{1}(k, H_{r})$, where $H_{r}\subset GSp_{6}\times GL_{1}$ are stabilizers of some point in $X_{r}(k)$, for $r=0,1,2$;
        \item $X^{ss}_{k}/(GSp_{6}\times GL_{1})(k)\longleftrightarrow H^{1}(k, \text{Aut}(C,F))$, where $C$ is an octonion algebra and $F\subset C$ is a composition subalgebra of dimension $2$, i.e., the orbit space of the semi-stable set is in bijection with the set of all isomorphism classes of the tuples $(C,F)$.
    \end{enumerate}
\end{theorem}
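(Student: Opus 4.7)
The plan is to deduce both parts of Theorem \ref{5.6} from the cohomological dictionary of Lemma \ref{l1}, generalized from $G = Sp_6$ to $G' := GSp_6 \times GL_1$, combined with the orbit-theoretic content already established in Theorems \ref{q}, \ref{Q}, and \ref{5.4}. The enabling input is the triviality $H^1(k, G') = 1$. To see this, I would write $GSp_6$ as an extension $1 \to Sp_6 \to GSp_6 \to GL_1 \to 1$ via the multiplier character; since $H^1(k, Sp_6) = 1$ (classical, e.g.\ \cite{KMRT}) and $H^1(k, GL_1) = 1$ by Hilbert 90, the associated long exact sequence of pointed sets (Proposition \ref{ESC}) forces $H^1(k, GSp_6) = 1$, and combining with $H^1(k, GL_1) = 1$ for the other factor yields $H^1(k, G') = 1$. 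Proposition \ref{ESC} and Corollary \ref{ESCC} then give, for every $G'$-orbit $U \subset X$ with a $k$-point $x_0$, a canonical bijection $U_k / G'(k) \longleftrightarrow H^1(k, \mathrm{Stab}_{G'}(x_0))$.

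For part (1), this bijection applied to the three orbits $X_0, X_1, X_2$ is essentially a restatement, once a $k$-rational representative is chosen in each. Such representatives are already exhibited in the excerpt, namely $-e_1 e_2 e_3$, $e_1 e_4 e_3 + e_5 e_2 e_3$, and $e_1 e_4 e_3 + e_5 e_2 e_3 + e_1 e_2 e_6$; defining $H_r$ to be the $G'$-stabilizer of the chosen representative then gives the statement directly.

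For part (2), I would pick the split representative $x_0 = e_1 e_2 e_3 + e_4 e_5 e_6 \in X^{ss}_k$, which by Theorem \ref{5.4}(1) corresponds to the pair $(Zorn(k), k \times k)$. The cohomological bijection reduces the theorem to identifying $H := \mathrm{Stab}_{G'}(x_0)$ with $\mathrm{Aut}(Zorn(k), k \times k)$; passing back to an arbitrary $k$-form of this pair then gives the statement for general $(C,F)$, since $H^1(k, \mathrm{Aut}(C,F))$ is well defined up to isomorphism of the pair. The restriction of this identification to $Sp_6 \times GL_1$ recovers the subgroup $\mathrm{Aut}(Zorn(k)/(k \times k)) \simeq SU(h)$ fixing $F$ pointwise, exactly as in the proof of Theorem \ref{O} (square case); the extra structure provided by similitudes should supply the semi-direct factor $\mathbb{Z}_2$ corresponding to the Galois swap of the two summands of $k \times k$. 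The main obstacle is to produce an explicit element of $H$ lying outside $Sp_6(k) \times GL_1(k)$ whose induced action on the associated octonion pair realizes this swap: concretely, to exhibit a similitude $h_a \in GSp_6(k)$ of non-trivial multiplier $a$ together with a compensating scalar $a' \in GL_1(k)$ such that $(h_a, a') \cdot x_0 = x_0$, and then, following the construction of octonions from rank-$3$ hermitian forms in Section \ref{2} and \cite{JN}, verify that the induced automorphism of $Zorn(k)$ preserves $F = k \times k$ setwise while exchanging its two orthogonal idempotents. Once this element is produced, the factorization $H \simeq SU(h) \rtimes \mathbb{Z}_2 \simeq \mathrm{Aut}(Zorn(k), k \times k)$ is immediate, and the required bijection $X^{ss}_k / G'(k) \longleftrightarrow H^1(k, \mathrm{Aut}(C,F))$ follows from the cohomological dictionary.
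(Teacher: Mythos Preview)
Your approach to part (1) via $H^1(k, GSp_6 \times GL_1) = 1$ and Corollary \ref{ESCC} matches the paper exactly. For part (2), however, you take a different route from the paper, and there is a concrete error in your stabilizer computation.

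The paper does \emph{not} attempt to identify $\mathrm{Stab}_{G'}(x_0)$ with $\mathrm{Aut}(C,F)$. Instead it uses the orbit decomposition $X^{ss}_k/G'(k) = \bigsqcup_{i \in k^\times/k^{\times 2}} U_i(k)/G_k$ established just before Theorem \ref{5.4}, together with the moduli interpretation of $H^1(k, \mathrm{Aut}(C,F))$ as isomorphism classes of pairs $(C_1, F_1)$, and simply writes down the explicit map $\mathcal{O}(x) \mapsto (C_1, k(\sqrt{-i}))$ for $\mathcal{O}(x) \in U_i(k)/G_k$, where $C_1$ is the octonion supplied by Theorem \ref{O}. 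Bijectivity is then immediate from Theorem \ref{O} and the indexing over $k^\times/k^{\times 2}$.

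Your claimed isomorphism $H \simeq SU(h) \rtimes \mathbb{Z}_2 \simeq \mathrm{Aut}(C,F)$ is false. In the remark immediately following Theorem \ref{5.6} the paper computes $\mathrm{Stab}_{G'}(x) \simeq (SU(h) \times GL_1) \rtimes \mathbb{Z}_2$: the extra $GL_1$ is the central copy $\{(aI_6, a^{-3}) : a \in GL_1\} \subset GSp_6 \times GL_1$, which acts trivially on all of $X$ since $aI_6$ scales $\wedge^3 V$ by $a^3$. Your argument can be salvaged by inserting this factor and then observing that it contributes trivially to $H^1$ by Hilbert 90, but as written the identification $H \simeq \mathrm{Aut}(C,F)$ does not hold, and the ``main obstacle'' you isolate (producing the $\mathbb{Z}_2$ element) is not the only missing piece. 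The paper's route avoids this issue entirely by never invoking the precise stabilizer structure for part (2).
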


\begin{proof}\
        $(1)$ The proof of the first part follows from Proposition \ref{ESC} and Corollary \ref{ESCC}.

        \vspace{1 mm}

        $(2)$ As we have discussed above, the set $H^{1}(k, \text{Aut}(C,F))$ can be identified with the set of all isomorphism classes of the pairs $\{(C,F):C$ is an octonion and $F\subset C $ is a subalgebra of dimension $2\}$. We define a map $$f^{ss}:X^{ss}_{k}/(GSp_{6}\times GL_{1})(k)=\bigsqcup_{i\in k^{\times}/k^{\times 2}}U_{i}(k)/G_{k}\rightarrow H^{1}(k, \text{Aut}(C,F)),$$ by $\mathcal{O}(x)\mapsto (C_{1},k(\sqrt{-i}))$, where $\mathcal{O}(x)\in U_{i}(k)/G_{k},i\in k^{\times}/k^{\times 2}$ and $C_{1}$ is the octonion corresponding to $\mathcal{O}(x)\in U_{i}(k)/G_{k}$, determined by Theorem \ref{O}. This map is a bijection.
\end{proof}

The above result gives us an interpretation for the orbit spaces in the semi-stable set $X^{ss}_{k}$ and the hypersurface $(X^{ss}_{k})^{c}$, in terms of cohomology sets.


\begin{remark} 
We have some observations about the irreducible prehomogeneous vector space $(GSp_{6}\times GL_{1},X)$, which we state in the following remarks.

\begin{enumerate}

    \item \textbf{The stabilizers of the generic points:} We can easily verify that the stabilizer of a generic point $x=(-1,-y_{0},0,B)\in X^{ss}_{k}\subset X$, where $B=\text{diag}(y_{1},y_{2},y_{3})$ and $\text{det}(B)\neq 0$, contains the subgroup of $GSp_{6}\times GL_{1}$ given by $\{(aI_{6},a^{-3})\in GSp_{6}\times GL_{1}: a\in GL_{1}\}\simeq GL_{1}$. So, $SU(h)\times GL_{1}\subset \text{Stab}(x)$, where $h$ is the rank $3$ trivial discriminant $k(\sqrt{-i})/k$-hermitian form represented by $B$ if $J(x)=i$ (see Theorem \ref{O}). Now, $\text{dim}(GSp_{6}\times GL_{1})-\text{dim}(\text{Stab}(x))=\text{dim}(X)=14$ implies $\text{dim}(\text{Stab}(x))=9=\text{dim}(SU(h)\times GL_{1})$. Hence, we find that the identity component of $\text{Stab}(x)$ is isomorphic to $SU(h)\times GL_{1}$. In (\cite{YA1}, Proposition $4.5$), Yukie proved that the identity component of $\text{Stab}(x)$ has index $2$ in $\text{Stab}(x)$. So, we get $\text{Stab}(x)\simeq (SU(h)\times GL_{1})\rtimes \mathbb{Z}_{2}$.

    \item \textbf{A parametrization of trivial discriminant rank 3 hermitian forms:} For $i\in k^{\times}/k^{\times 2}$ the orbit space $U_{i}(k)/G_{k}$ admits a bijection with the isometry classes of trivial discriminant $k(\sqrt{-i})/k$-hermitian forms of rank $3$ (see Theorem \ref{O}). So, the orbit space in the semi-stable set classifies all trivial discriminant $K/k$-hermitian forms of rank $3$, where $K$ is any quadratic algebra over $k$.

    \item In general, if $C$ is any composition algebra and $D\subset C$ is a subalgebra, $H^{1}(k,\text{Aut}(C,D))$ classifies all the $k$-forms of the tuple $(C,D)$ up to isomorphism, as defined earlier. So, these cohomology sets can be identified with the sets consisting of $2$-tuples of orbits from $X_{k}/(GSp_{6}\times GL_{1})(k)$, by appropriate choices. For example, if we take C to be an octonion algebra and $D\subset C$ a quaternion subalgebra, then $\text{Aut}(C,D)\simeq SL_{1}(D)\rtimes \text{Aut}(D)$ and $H^{1}(k,\text{Aut}(C,D))\hookrightarrow (X^{ss}_{k}/(GSp_{6}\times GL_{1})(k)) \times (X_{2}(k)/(GSp_{6}\times GL_{1})(k))$. Similarly, if we take $C$ to be a quaternion algebra and $D\subset C$ a quadratic subalgebra, then $H^{1}(k,\text{Aut}(C,D))\hookrightarrow (X_{2}(k)/(GSp_{6}\times GL_{1})(k)) \times (X_{1}(k)/(GSp_{6}\times GL_{1})(k))$.

    \item Similar to the previous subsection, we can also get an interpretation for the $k$-rational orbit spaces in the projective space $\mathbb{P}(X)$, the projective variety $X_{J}\subset \mathbb{P}(X)$ (i.e., set of all zeros of $J$ in $\mathbb{P}(X)$) and its complement $X^{c}_{J}\subset \mathbb{P}(X)$, under the $(GSp_{6}\times GL_{1})$-action.
\end{enumerate}

\end{remark}

\subsection{The PV associated to the octonions}\label{SS5.3} Let $(C,N_{C})$ be any octonion algebra defined over the field $k$ and $G_{C}=\text{Aut}(C)$ be the group of all algebra automorphisms, which is a simple connected group of type $G_{2}$. Let $1_{C}\in C$ be the identity and $V_{C}=1^{\perp}_{C}\subset C$ be its orthogonal complement with respect to the norm $N_{C}$. Then $V_{C}$ is the space of all trace zero elements in $C$ and an irreducible representation of $G_{C}$. To avoid notational ambiguity, we denote the set of all $k$-rational points in $G_{C}$ and $V_{C}$ by $G_{C}(k)$ and $V_{C}(k)$, respectively. If we take $J_{C}$ to be the restriction of the octonion norm $N_{C}$ to $V_{C}$, it is $G_{C}$-invariant. Let $V^{ss}_{C}=\{v\in V_{C}:J_{C}(v)\neq 0\}$, which is same as $V_{C}-\{0\}$ for division algebras. We denote the set of $k$-rational points in $V^{ss}_{C}$ by $V^{ss}_{C}(k)$. Then we have the following result.

\begin{theorem}\label{5.7}
    The $G_{C}(k)$-orbits in $V_{C}(k)-\{0\}$ are the fibers $J_{C}^{-1}\{i\},i\in k$.
\end{theorem}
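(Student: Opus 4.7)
The plan is to establish both containments between $G_{C}(k)$-orbits and fibers of $J_{C}$. The easy direction is automatic: any $\phi\in G_{C}=\text{Aut}(C)$ fixes $1_{C}$, and because the norm of a composition algebra is intrinsic to the algebra structure (one has $N_{C}(x)=x\overline{x}$ with $\overline{x}=b_{N_{C}}(x,1_{C})\cdot 1_{C}-x$), every algebra automorphism is an isometry of $N_{C}$. Hence $\phi$ preserves $V_{C}=1_{C}^{\perp}$ and satisfies $J_{C}\circ\phi=J_{C}$, so every $G_{C}(k)$-orbit lies inside a single fiber of $J_{C}$.

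For the reverse inclusion, fix $v_{1},v_{2}\in V_{C}(k)\setminus\{0\}$ with $N_{C}(v_{j})=i\in k$. Since $T_{C}(v_{j})=0$, the minimal-polynomial identity of the composition algebra forces $v_{j}^{2}=-i\cdot 1_{C}$. If $i\neq 0$, then $D_{j}:=k\cdot 1_{C}\oplus k\cdot v_{j}$ is a two-dimensional composition subalgebra of $C$, and both copies are isomorphic as composition algebras to $k[t]/\langle t^{2}+i\rangle$; the $k$-linear map $\varphi:D_{1}\to D_{2}$ defined by $\varphi(1_{C})=1_{C}$, $\varphi(v_{1})=v_{2}$ is then a composition-algebra isomorphism. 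By the classical extension theorem for composition subalgebras (\cite{SV}, Chapter 1), $\varphi$ lifts to an automorphism of $C$, yielding the desired element of $G_{C}(k)$ carrying $v_{1}$ to $v_{2}$.

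The case $i=0$ is more delicate, because $v\in V_{C}$ isotropic does not lie in any two-dimensional composition subalgebra. First, the existence of a nonzero isotropic element in $V_{C}$ forces $N_{C}$ to be isotropic, and hence $C$ to be split. The plan is to enlarge each $v_{j}$ to a four-dimensional composition subalgebra. Choose $u_{j}\in V_{C}(k)$ with $b_{N_{C}}(v_{j},u_{j})=1$; this is possible because $b_{N_{C}}$ is nondegenerate and $v_{j}\neq 0$, after subtracting an appropriate multiple of $1_{C}$ to enforce the trace-zero condition. The identity $xy+yx=-b_{N_{C}}(x,y)\cdot 1_{C}$ valid for trace-zero $x,y$ gives $v_{j}u_{j}+u_{j}v_{j}=-1_{C}$, and by Artin's theorem (in an alternative algebra any two elements generate an associative subalgebra), the span $Q_{j}:=\text{span}\{1_{C},v_{j},u_{j},v_{j}u_{j}\}$ is a four-dimensional associative subalgebra. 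A direct Gram-matrix computation shows that $N_{C}|_{Q_{j}}$ is nondegenerate, so $Q_{j}$ is a quaternion composition subalgebra, and since it contains the nonzero isotropic element $v_{j}$ it is necessarily split, $Q_{j}\simeq M_{2}(k)$. Inside $M_{2}(k)$ every nonzero trace-zero nilpotent matrix is of rank one and any two such are $GL_{2}(k)$-conjugate, which furnishes an isomorphism $\Phi:Q_{1}\to Q_{2}$ with $\Phi(v_{1})=v_{2}$. Invoking the extension theorem again lifts $\Phi$ to an automorphism of $C$.

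\textbf{Main obstacle.} The principal difficulty is the singular case $i=0$: the plane $k\cdot 1_{C}+k\cdot v$ is degenerate and therefore is not a composition subalgebra, so the extension theorem cannot be applied directly. The workaround requires constructing a quaternion composition subalgebra containing $v$, and the key technical verification is that the span $\{1_{C},v,u,vu\}$ is genuinely four-dimensional and that $N_{C}$ restricts nondegenerately to it. Once this is secured, the rest of the argument (conjugacy of rank-one nilpotents in $M_{2}(k)$ together with the extension theorem for composition subalgebras) is standard.
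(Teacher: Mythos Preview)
Your proof is correct and follows essentially the same strategy as the paper: for $i\neq 0$ you both pass through the two-dimensional composition subalgebra $k\cdot 1_{C}\oplus k\cdot v$ and invoke the extension theorem, while for $i=0$ you both embed the isotropic vector into a split quaternion subalgebra $Q\simeq M_{2}(k)$, use conjugacy of nonzero nilpotents in $M_{2}(k)$, and extend. The only cosmetic difference is that the paper cites \cite{SV}, Proposition~1.6.4 for the existence of such a quaternion subalgebra, whereas you construct it explicitly via the span $\{1_{C},v,u,vu\}$; your Gram-matrix check is the hands-on substitute for that citation.
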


\begin{proof}
    Let $a,b\in V_{C}(k)-\{0\}$ be such that $J_{C}(a)=J_{C}(b)=i\neq 0$. Since $a$ is orthogonal to $1_{C}$, the restriction of the norm $N_{C}$ to the space $K_{1}=k.1_{C}\oplus k.a$ is non-singular. So $K_{1}$ is a composition subalgebra of $C$ (see \cite{SV}, Chapter $1$, Proposition $1.2.3$), with dimension $2$ and the norm given by $\langle 1,i\rangle$. Similarly, we can check that $K_{2}=k.1_{C}\oplus k.b$ is also a composition subalgebra of $C$ with norm $\langle 1,i\rangle$ and $K_{1}\simeq K_{2}$, by the isomorphism determined by sending $a$ to $b$. This isomorphism can be extended to a $k$-isomorphism $\phi:C\rightarrow C$ (see \cite{SV}, Chapter $1$, Corollary $1.7.3$) such that $\phi(a)=b$ and $\phi \in G_{C}(k)$. So, $a$ and $b$ are in the same $G_{C}(k)$-orbit.
    
    \vspace{2 mm}
    
    If $C$ is a division algebra, $J^{-1}_{C}\{0\}(k)=\{0\}$. If $C$ is split, $J^{-1}_{C}\{0\}(k)$ contains non-zero elements. Let $a,b\in V_{C}(k)-\{0\}$ be two such elements with $J_{C}(a)=J_{C}(b)=0$. In this case, we cannot proceed with the quadratic algebras as above. Then $\exists $ two quaternion algebras $Q_{1},Q_{2}\subset C$ containing $a$ and $b$, respectively (see \cite{SV}, Chapter $1$, Proposition $1.6.4$). Both quaternions are isomorphic to the matrix algebra $(M_{2}(k),\text{det})$, where `$\text{det}$' denotes the determinant and $a\in Q_{1},b\in Q_{2}$ correspond to the nilpotent elements, as $a^{2}=b^{2}=0$. So, $\exists$ an isomorphism $\phi_{1}:Q_{1}\rightarrow Q_{2}$. Then, $\phi_{1}(a)$ is nilpotent in $Q_{2}\simeq M_{2}(k)$ and $\exists$ an isomorphism $\phi_{2}:Q_{2}\rightarrow Q_{2}$ such that $\phi_{2}(\phi_{1}(a))=b$; since $Q_{2}\simeq M_{2}(k)$ and all nilpotent elements in $M_{2}(k)$ are equivalent to the matrix $$\left(\begin{array}{cc}
0 & 1 \\
0 & 0
\end{array}\right)$$ (i.e., the unique Jordan canonical form), under the action of the automorphism group. Now, $\phi_{2}\phi_{1}$ extends to an automorphism of $C$ (see \cite{SV}, Chapter $1$, Corollary $1.7.3$), which maps $a$ to $b$. So, we get that all non-zero elements in $J^{-1}_{C}\{0\}(k)$ form a single orbit.

\end{proof}


If we look into (\cite{HJ}, Chapter I, Proposition $1.2$), it is very easy to see that Harris has considered the same representation over number fields with $C$ as the split octonion (so $\text{Aut}(C)$ also splits). From the above result, we get an alternate proof of Harris's result and a generalization for the non-split cases, defined over an arbitrary field with characteristic different from $2$. 

\begin{remark}
    For quaternion algebras also, we can get the orbit space decomposition in the orthogonal complement of the identity under the action of the automorphism group, similarly as above.
\end{remark}

\noindent \textbf{$(\text{G}_{\text{C}}\times \text{GL}_{\text{1}}, \text{V}_{\text{C}})$ is a PV:} We consider the representation $(G_{C}\times GL_{1},V_{C})$, where the $GL_{1}$-action is given by scalar multiplication. Then, from the above computations, it is clear that this representation is an irreducible PV with $J_{C}$ as relative invariant and $V^{ss}_{C}$ as the semi-stable set, which is a Zariski open dense orbit over $\overline{k}$. We denote the set of all $k$-rational points in $(G_{C}\times GL_{1})$ by $(G_{C}\times GL_{1})(k)$. As $J_{C}$ has degree $2$, we get the following result.

\begin{theorem}\label{T5.10}
    If $C$ is the split octonion algebra, $V^{ss}_{C}(k)/(G_{C}\times GL_{1})(k)\longleftrightarrow k^{\times}/k^{\times 2}$. So, the orbit space in the semi-stable set is in bijection with the set of all isomorphism classes of composition algebras with dimension $2$ over the field $k$.
\end{theorem}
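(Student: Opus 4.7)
The plan is to construct the bijection in two steps: first identify $V_C^{ss}(k)/(G_C\times GL_1)(k)$ with $k^{\times}/k^{\times 2}$ via the quadratic form $J_C$, and then invoke the standard parametrization of $2$-dimensional composition algebras over $k$.

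For the first step, I would note that every $g\in G_C$ is an algebra automorphism and hence an isometry of $N_C$, so for $(g,\lambda)\in G_C\times GL_1$ and $v\in V_C$ one has $J_C((g,\lambda)\cdot v)=\lambda^{2}J_C(v)$. Consequently the map
\[
\Phi\colon V_C^{ss}(k)/(G_C\times GL_1)(k)\longrightarrow k^{\times}/k^{\times 2},\qquad [v]\longmapsto J_C(v)\cdot k^{\times 2},
\]
is well defined. For injectivity, if $J_C(v_1)=\lambda^{2}J_C(v_2)$ for some $\lambda\in k^{\times}$, then $J_C(\lambda^{-1}v_1)=J_C(v_2)\neq 0$, so Theorem \ref{5.7} produces $g\in G_C(k)$ with $g(\lambda^{-1}v_1)=v_2$, and therefore $(g,\lambda^{-1})\cdot v_1=v_2$. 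For surjectivity, since $C$ is split the norm $N_C$ is hyperbolic, so its restriction $J_C$ to the $7$-dimensional hyperplane $V_C=1_C^{\perp}$ is an isotropic non-degenerate form, which represents every element of $k^{\times}$.

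For the second step, I would show directly that the orbit of $v\in V_C^{ss}(k)$ corresponds to the composition subalgebra $K_v:=k\cdot 1_C+k\cdot v\subset C$. Since $v\perp 1_C$ one has $b_{N_C}(v,1_C)=0$, so $\overline{v}=-v$ and $v^{2}=-v\overline{v}=-N_C(v)\cdot 1_C=-J_C(v)\cdot 1_C\in K_v$. Hence $K_v$ is a $2$-dimensional subalgebra on which $N_C$ is non-degenerate, and so it is a composition subalgebra, isomorphic to $k[t]/\langle t^{2}+J_C(v)\rangle$. Because $\text{char}(k)\neq 2$, every $2$-dimensional composition algebra over $k$ arises in this way, and $k[t]/\langle t^{2}+a\rangle\simeq k[t]/\langle t^{2}+b\rangle$ exactly when $ab^{-1}\in k^{\times 2}$. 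Composing $\Phi$ with the parametrization $a\cdot k^{\times 2}\mapsto k[t]/\langle t^{2}+a\rangle$ then yields the claimed bijection with isomorphism classes of $2$-dimensional composition algebras.

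I do not anticipate a serious obstacle: the whole argument is driven by Theorem \ref{5.7} together with the classical fact that the hyperbolic norm of a split composition algebra represents every scalar. The only subtlety worth emphasising is the bookkeeping for how the $GL_1$ factor rescales $J_C$ by a square, which is precisely what collapses the target $k^{\times}$ to the quotient $k^{\times}/k^{\times 2}$.
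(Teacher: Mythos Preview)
Your proposal is correct and follows essentially the same approach as the paper: invoke Theorem~\ref{5.7} for the fibres of $J_C$, use hyperbolicity of the split octonion norm for surjectivity, and use the degree-$2$ scaling under $GL_1$ to pass to $k^{\times}/k^{\times 2}$. The paper's own proof is terser on the second step (it simply cites that $k^{\times}/k^{\times 2}$ parametrizes quadratic algebras), whereas you spell out the subalgebra $K_v$ explicitly; this is a helpful elaboration but not a different route.
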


\begin{proof}
    The norm of the split octonion algebra is hyperbolic. Then $J_{C}$ also contains hyperbolic subforms, i.e., $J_{C}$ attains every element in $k$ on $V_{C}(k)$. So, every element in $k^{\times}$ is attained by $J_{C}$ for some non-zero element in $V^{ss}_{C}(k)$. Now the proof follows from Theorem \ref{5.7} and the fact that $J_{C}$ has degree $2$.
\end{proof}

\begin{remark}
    If $C$ is a division octonion algebra, then the orbit space $V^{ss}_{C}(k)/(G_{C}\times GL_{1})(k)$ is in bijection with $\{N_{C}(x):x\in V_{C}(k)-\{0\}\}/k^{\times 2}$ (see Theorem \ref{5.7}).
\end{remark}

\noindent \textbf{The stabilizers of the generic points:} Let $a\in V^{ss}_{C}(k)$ and $H=\text{Stab}_{(G_{C}\times GL_{1})}(a)$, for any octonion algebra $C$. As $H\subset G_{C}\times GL_{1}$, let $(g,\lambda)\in H$ for $g\in G_{C}$ and $\lambda\in GL_{1}$. Then $J_{C}((g,\lambda).a)=\lambda^{2}J_{C}(a)$ and we get $\lambda^{2}=1$, as $(g,\lambda)$ fixes $a$. This implies $\lambda=1,-1$, and so we must have $g.a=a,-a$, respectively. If $\lambda=1$, $g$ belongs to the subgroup of $G_{C}$ that fixes the quadratic subalgebra $K_{1}=k.1_{C}\oplus k.a\subset C$ pointwise. If $\lambda=-1$, we must have $g.a=-a$ and such $g\in G_{C}$ exist, as $a\mapsto -a$ defines an algebra automorphism on $K_{1}$ (see \cite{SV}, Chapter $1$, Corollary $1.7.3$). Then the restriction of $g$ to $K_{1}$ is an algebra automorphism of $K_{1}$. So, we find that the stabilizer subgroup of $a\in V^{ss}_{C}(k)$ is isomorphic to the group of all algebra automorphisms of $C$ that maps $K_{1}$ onto itself. Then we have $H\simeq \text{Aut}(C,K_{1})\simeq SU(h)\rtimes \mathbb{Z}_{2} $ (for details, see \cite{JN}, Theorem $3$), where $h$ is a non-degenerate $K_{1}/k$-hermitian form of rank $3$ with trivial discriminant and $\text{dim}(G_{C}\times GL_{1})-\text{dim}(H)=15-8=7=\text{dim}(V_{C})$. In particular, $C\simeq K_{1}\oplus K_{1}^{\perp}$ and $h$ is a hermitian form on $K_{1}^{\perp}$ such that $N_{C}(x,y)=N_{K_{1}}(x)+h(y,y),x\in K_{1}, y\in K_{1}^{\perp}$ and $N_{K_{1}}$ denotes the $K_{1}/k$-norm.

\vspace{3 mm}

\noindent\textbf{Cohomological interpretation:} If we consider the stabilizer subgroup $H\subset G_{C}\times GL_{1}$ of the generic point $a\in V^{ss}_{C}(k)$ (as described above), then the exact sequence of cohomology sets is given by (see Proposition \ref{ESC}) $$1\rightarrow H^{0}(k,H)\rightarrow H^{0}(k,G_{C}\times GL_{1})\rightarrow H^{0}(k,(G_{C}\times GL_{1})/H)\rightarrow H^{1}(k,H)\rightarrow H^{1}(k,G_{C}\times GL_{1}),$$ and we get $V^{ss}_{C}(k)/(G_{C}\times GL_{1})(k)\longleftrightarrow \text{ker}\{H^{1}(k,H)\rightarrow H^{1}(k,G_{C}\times GL_{1})\}$ (see Corollary \ref{ESCC}).

\begin{remark}\label{CE}
Let us consider an octonion division algebra $(C,N_{C})$ defined over the field $k$. If $V_{C}^{\ast}$ is the dual of $V_{C}$, $(G_{C}\times GL_{6}, V_{C}^{\ast}\otimes V_{6})$ is a PV (with the natural actions) castling equivalent to $(G_{C}\times GL_{1},V_{C}\otimes V_{1})\simeq(G_{C}\times GL_{1},V_{C})$, as $G_{C}\subset SO(J_{C})\subset GL(V_{C})$ (see \cite{SV}, Chapter $2$) and $1+6=7$; where $V_{6}$ and $V_{1}$ denote the vector spaces of dimensions $6$ and $1$, respectively. As we can see in (\cite{IJ1}, Proposition $3.1$), we can identify $V_{C}^{\ast}\otimes V_{6}$ with the space of all $7\times 6$ matrices $M_{7\times 6}$ as a representation of $G_{C}\times GL_{6}$ and the semi-stable set in this PV corresponds to the matrices of maximal ranks. Since there are non-zero matrices with rank less than the maximal rank, $\exists$ at least two $k$-rational orbits in the complement of the semi-stable set in this PV. But, for $(G_{C}\times GL_{1},V_{C})$, there is no $k$-rational orbit in the complement of the semi-stable set other than $\{0\}$. So, this shows that the $k$-rational orbit space in the complement of the semi-stable set may not be preserved under the castling equivalence.
\end{remark}


\subsection{More prehomogeneous vector spaces} 

In the rest of this section, we describe the orbit space decompositions and arithmetic interpretations of the orbit spaces in the semi-stable sets for some PV's, which follows easily from (\cite{HJ}) and (\cite{IJ1}). Similar to these two references, we assume $\text{char}(k)=0$. We refer the reader to (\cite{IJ1}) and (\cite{SK}) for more details.

\vspace{2 mm}

\noindent \textbf{A parametrization of the octonions:} As we can see, we do not get a bijection for octonion algebras in any of the cases discussed so far. Let us consider the group $G_{1}=GL_{7}$ defined over $k$ and the induced action of $G_{1}$ on $Y_{1}=\wedge^{3}V_{7}$, where $V_{7}$ is the vector space of dimension $7$ with $\{e_{r}\}^{7}_{r=1}$ as the standard basis. Let $\rho : G_{1}\rightarrow GL(Y_{1})$ denote this representation. We can check that $(G_{1},Y_{1})$ is a prehomogeneous vector space with a relative invariant $J_{2}$ of degree $7$ (see \cite{SK}, \cite{IJ1}) and the semi-stable set $Y^{ss}_{1}=\{ x\in Y_{1}: J_{2}(x)\neq 0\}$ is a Zariski open dense $G_{1}$-orbit. Let $\Tilde{G_{1}}=\rho(G_{1})$ and $H\subset \Tilde{G_{1}}$ be the stabilizer of some fixed point $y_{1}\in Y^{ss}_{1}(k)$. Then $H$ is isomorphic to a connected simple group of type $G_{2}$ over $k$ (see \cite{SK}) and we get the following result (see \cite{IJ1}, type(6)).

\begin{theorem}
    The $G_{1}(k)$-orbits in $Y^{ss}_{1}(k)$ are in bijection with the set of all isomorphism classes of octonion algebras defined over the field $k$.
\end{theorem}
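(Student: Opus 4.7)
My plan is to derive the statement by combining the cohomological framework of Section \ref{2} with the classical identification of octonion algebras as twisted forms of $Zorn(k)$, in parallel with the arguments used for the PVs treated in Sections \ref{4} and \ref{5}.

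First I would exhibit a $k$-rational base point $y_{0}\in Y_{1}^{ss}(k)$. Since the split octonion algebra $Zorn(k)$ is defined over every field $k$, its seven-dimensional traceless subspace carries a canonical $\text{Aut}(Zorn(k))$-invariant trilinear form (for instance $(u,v,w)\mapsto \text{Tr}(u(vw))$, suitably normalised), which under the identification of this subspace with $V_{7}$ produces a semi-stable element $y_{0}$; the assertions cited from \cite{IJ1} and \cite{SK} then give $\text{Stab}_{\tilde{G_{1}}}(y_{0})\simeq \text{Aut}(Zorn(k))$, the split form of $G_{2}$.

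Next I would apply Proposition \ref{ESC} and Corollary \ref{ESCC} to the inclusion $H\hookrightarrow \tilde{G_{1}}\subset GL(Y_{1})$, regarded after pullback as an inclusion into $G_{1}=GL_{7}$. Combined with Hilbert's Theorem~90, which gives $H^{1}(k,GL_{7})=1$, this identifies the orbit space $Y_{1}^{ss}(k)/G_{1}(k)$ with $H^{1}(k,H)=H^{1}(k,\text{Aut}(Zorn(k)))$. To finish, I would invoke the classical bijection (see \cite{KMRT}, Chapter VII, or \cite{SJP}, Chapter III)
\[
H^{1}(k,\text{Aut}(Zorn(k)))\;\longleftrightarrow\;\{\text{isomorphism classes of octonion algebras over }k\},
\]
which simply expresses that the octonion algebras over $k$ are precisely the $k$-forms of $Zorn(k)$ as a composition algebra.

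The main obstacle is bridging the $G_{1}(k)=GL_{7}(k)$-action with the $\tilde{G_{1}}(k)$-action: since $\ker(\rho)=\mu_{3}$, a naive application of the long exact sequence would introduce an extra $H^{1}(k,\mu_{3})=k^{\times}/(k^{\times})^{3}$ factor coming from the character $J_{2}(g\cdot y)=(\det g)^{3}J_{2}(y)$. The resolution is that the apparent scaling freedom is absorbed, essentially by Jacobson's theorem that every isotope of an octonion algebra is isomorphic to it, so that two elements of $Y_{1}^{ss}(k)$ lie in a common $GL_{7}(k)$-orbit if and only if the octonion algebras they encode (via the Hitchin-type reconstruction $y\mapsto C_{y}$) are isomorphic. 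Making this step fully explicit---checking that $y\mapsto C_{y}$ is constant on $G_{1}(k)$-orbits and that any isomorphism $C_{y_{1}}\simeq C_{y_{2}}$ lifts to an element of $GL_{7}(k)$ sending $y_{1}$ to $y_{2}$---is the technical heart of the argument and is exactly the content of Igusa's type-(6) analysis in \cite{IJ1}, which I would invoke rather than reprove.
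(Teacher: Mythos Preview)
Your cohomological setup is correct and you have correctly located the crux, namely the kernel $\mu_{3}=\ker\rho$. The paper itself offers no argument beyond the citation to \cite{IJ1}, so you are attempting more than the paper does. However, your resolution of the $\mu_{3}$ obstruction does not go through. The stabilizer of $y_{0}$ in $G_{1}=GL_{7}$ is $G_{2}\times\mu_{3}$, not $G_{2}$ alone: the extension $1\to\mu_{3}\to\mathrm{Stab}_{GL_{7}}(y_{0})\to G_{2}\to 1$ splits because the split $G_{2}$ already lands in $SL_{7}$. Corollary~\ref{ESCC} together with $H^{1}(k,GL_{7})=1$ therefore gives
\[
Y_{1}^{ss}(k)/GL_{7}(k)\;\longleftrightarrow\;H^{1}(k,G_{2})\times k^{\times}/(k^{\times})^{3},
\]
and the second factor is genuinely present. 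Concretely, $y_{0}$ and $\lambda y_{0}$ lie in the same $GL_{7}(k)$-orbit if and only if $\lambda\in(k^{\times})^{3}$: from $J_{2}(g\cdot y)=(\det g)^{3}J_{2}(y)$ one gets $\lambda^{7}\in(k^{\times})^{3}$, hence $\lambda\in(k^{\times})^{3}$ since $\gcd(7,3)=1$; yet $C_{y_{0}}\simeq C_{\lambda y_{0}}$ for every $\lambda$. Jacobson's isotopy theorem concerns deforming the product by an invertible element of the algebra, not rescaling the defining $3$-form, so it does not collapse this factor. Your biconditional ``same $GL_{7}(k)$-orbit $\Leftrightarrow$ isomorphic octonions'' fails in the $\Leftarrow$ direction.

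The clean bijection with octonion algebras alone is the one for $\Tilde{G_{1}}(k)$-orbits, which is what the paragraph preceding the theorem actually sets up: there the stabilizer is exactly $G_{2}$, the inclusion $G_{2}\hookrightarrow\Tilde{G_{1}}$ lifts through $GL_{7}$, so $H^{1}(k,G_{2})\to H^{1}(k,\Tilde{G_{1}})$ factors through $H^{1}(k,GL_{7})=1$ and the kernel is all of $H^{1}(k,G_{2})$. That is what one should extract from Igusa's type-(6) discussion; with $G_{1}(k)=GL_{7}(k)$ as literally written, the extra $k^{\times}/(k^{\times})^{3}$ is unavoidable, and invoking \cite{IJ1} does not make it disappear.
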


Now we have the following identification of $(G,X)$, i.e., $(Sp_{6},X)$ inside $(G_{1},Y_{1})$:
 \begin{center}
     $G\simeq \biggl\{ \left(\begin{array}{cc}
g & 0 \\
0 & 1
\end{array}\right) \in GL_{7}: g\in Sp_{6}\biggl\}\subset G_{1}$
\end{center}

 \noindent and $X\hookrightarrow \wedge^{3}V_{6}\hookrightarrow \wedge^{3}V_{7}=Y_{1}$, where $V_{6}$ is the vector space of dimension $6$ and we identify it inside $V_{7}=\text{span}\{e_{r}\}^{7}_{r=1}$ as $V_{6}=\text{span}\{e_{r}\}^{6}_{r=1}\subset V_{7}$, which gives us the required embedding. Then the representation $\rho |_{G}:G\rightarrow GL(Y_{1})$ has $X$ as one of its irreducible components, which gives us all composition algebras in Theorem \ref{M}.

 \vspace{3 mm}
    
    \noindent \textbf{A parametrization of the quaternions:} We can check that $(SL_{5}\times GL_{3},Y_{2})$ is a PV (see \cite{SK}, \cite{IJ1} for details), where $Y_{2}=(\wedge^{2}V_{5})^{3}$ and $V_{5}$ is the $5$-dimensional vector space. The relative invariant has degree $15$. As we can see in (\cite{IJ1}, type $(10)$), $$Y_{2}^{ss}(k)/(SL_{5}\times GL_{3})(k)\longleftrightarrow H^{1}(k, \text{Aut}(SL_{2})).$$ So, the orbits in the semi-stable set are in bijection with the set of all isomorphism classes of quaternion algebras defined over $k$ (see \cite{SJP}, Chapter III, Section $1$, p. 125).

    \vspace{3 mm}
    
    \noindent \textbf{k-forms of $\text{SL}_{3}$:} $(GL_{8},Y_{4})$ is also a PV (see \cite{SK}), where $Y_{4}=\wedge^{3}V_{8}$, $V_{8}$ is the vector space of dimension $8$ and $GL_{8}$ has the induced action on $Y_{4}$. The relative invariant has degree $16$. We can see from (\cite{IJ1}, type $(7)$), $$Y_{4}^{ss}(k)/GL_{8}(k)\longleftrightarrow H^{1}(k,\text{Aut}(SL_{3})),$$ which classifies the $k$-forms of $SL_{3}$ (see \cite{SJP}, Chapter III, Section $1$, p. 125 for details).
    
    \vspace{3 mm}
    
    \noindent \textbf{($\text{E}_{\text{7}}\times \text{GL}_{1},\text{V}_{\text{56}}$):} We can get the orbit space decomposition for the PV $(E_{7}\times GL_{1}, V_{56})$ (see \cite{SK}) from the computations in (\cite{HJ}, Chapter II, Proposition $2.1$, $2.2$); where $E_{7}$ is the simple connected $k$-split group of type $E_{7}$, $V_{56}$ is the irreducible representation of $E_{7}$ with dimension $56$ and the $GL_{1}$-action is given by scalar multiplication. Here, the relative invariant has degree $4$. The $k$-rational orbits in the semi-stable set $V^{ss}_{56}(k)$ under the $E_{7}(k)$-action are the fibers of the non-zero scalars in $k^{\times}$, under the relative invariant (see \cite{HJ}, Chapter II, Proposition $2.1$). So, we have $$V^{ss}_{56}(k)/(E_{7}\times GL_{1})(k)\longleftrightarrow k^{\times}/k^{\times 4},$$ which is the cohomology set $H^{1}(k,\mu_{4})$, where $\mu_{4}$ denotes the group of $4$-th roots of unity. The orbit space decomposition of the complement of the semi-stable set can also be determined easily (see \cite{HJ}, Chapter II, Proposition $2.2$). For details, we refer the reader to (\cite{HJ}) and (\cite{SK}).

\section{Some observations about Freudenthal algebras}\label{6}

In this section, we explain what we get for the Freudenthal algebras from the parametrizations of composition algebras, discussed in the previous two sections (Section \ref{4} and Section \ref{5}). We also discuss some PV's where the representations are the Freudenthal algebras, and we compute the orbit spaces in detail for the Freudenthal division algebras of dimension $9$. For this section, we always assume that the characteristic of the underlying field $k$ is different from $2$ and $3$.

\vspace{2 mm}

Let $(C,N_{C})$ be any composition algebra defined over the field $k$ and $\mathcal{H}_{3}(C,\Gamma)$ be the reduced Freudenthal algebra for some $\Gamma=\text{diag}(\gamma_{1},\gamma_{2},\gamma_{3})\in GL_{3}(k)$. If we fix $\Gamma$, then $\mathcal{H}_{3}(C_{1},\Gamma)\simeq \mathcal{H}_{3}(C_{2},\Gamma)$ iff $C_{1}\simeq C_{2}$ (see \cite{KMRT}, Chapter IX for details). So, in this case, all algebras of this type can be parametrized by Theorem \ref{M} and the results discussed in Section \ref{5} as well. Now, we investigate the case where $C$ is fixed and $\Gamma$ varies.

\vspace{2 mm}

We have the bilinear trace form $T$ defined on $\mathcal{H}_{3}(C,\Gamma)$. The isomorphism class of $\mathcal{H}_{3}(C,\Gamma)$ is determined by the isometry class of $T$ (see \cite{KMRT}, Chapter IX). We can check that \[T\simeq \langle 1,1,1\rangle \perp b_{N_{C}}\otimes \langle \gamma_{3}^{-1}\gamma_{2},\gamma_{1}^{-1}\gamma_{3},\gamma_{2}^{-1}\gamma_{1} \rangle \simeq \langle 1,1,1\rangle \perp b_{N_{C}}\otimes \langle -a, -b, ab \rangle , \] for some $a,b\in k^{\times}$ and $b_{N_{C}}$ is the bilinear form associated to the norm $N_{C}$ of the composition algebra $C$. Let $q=b_{N_{C}}\otimes \langle\langle a,b\rangle\rangle$. Then the isometry class of $T$ uniquely determines the isometry classes of $N_{C}$ and $q$. In contrast, the isometry classes of $N_{C}$ and $q$ uniquely determine the isometry class of $T$ (see \cite{KMRT}, Chapter IX). Hence, we get the following results for $C=k$, the $1$-dimensional composition algebra, and $C=k(\sqrt{-i})$, the $2$-dimensional composition algebras, $i\in k^{\times}$.

\begin{theorem}
    \textit{The isomorphism classes of the reduced Freudenthal algebras of the form $\mathcal{H}_{3}(k,\Gamma)$ are in one to one correspondence with the isomorphism classes of quaternion algebras defined over $k$, i.e., $X_{2}(k)/G_{k}$ in Theorem \ref{Q}.}
\end{theorem}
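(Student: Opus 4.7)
The plan is to exploit the fact, recorded in the paragraph just before the statement, that the isomorphism class of a reduced Freudenthal algebra $\mathcal{H}_{3}(k,\Gamma)$ is determined by the isometry class of its bilinear trace form $T$, together with the explicit formula
$$T\simeq\langle 1,1,1\rangle\perp b_{N_{k}}\otimes\langle\gamma_{3}^{-1}\gamma_{2},\;\gamma_{1}^{-1}\gamma_{3},\;\gamma_{2}^{-1}\gamma_{1}\rangle.$$
Since $C=k$ is fixed throughout, $b_{N_{k}}$ is a fixed one-dimensional form, so all variation in $T$ is concentrated in the three-dimensional piece $q_{\Gamma}:=\langle\gamma_{3}^{-1}\gamma_{2},\gamma_{1}^{-1}\gamma_{3},\gamma_{2}^{-1}\gamma_{1}\rangle$. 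An immediate computation gives $(\gamma_{3}^{-1}\gamma_{2})(\gamma_{1}^{-1}\gamma_{3})(\gamma_{2}^{-1}\gamma_{1})=1$, so $q_{\Gamma}$ has trivial discriminant.

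Next I would invoke Lemma \ref{l2} to write $q_{\Gamma}\simeq\langle -a,-b,ab\rangle$, the pure subform of a uniquely determined $2$-fold Pfister form $\langle\langle a,b\rangle\rangle$; this Pfister form is in turn the norm of a uniquely determined quaternion algebra over $k$. Combining this with Witt cancellation of the fixed summand $\langle 1,1,1\rangle$ (and of the fixed scalar factor coming from $b_{N_{k}}$) yields a well-defined injection
$$\{\mathcal{H}_{3}(k,\Gamma)\}/{\simeq}\;\hookrightarrow\;\{\text{quaternion algebras over }k\}/{\simeq}.$$
For surjectivity, given any $\langle\langle a,b\rangle\rangle$ one verifies that the choice $\Gamma=\text{diag}(1,-a,ab)$ gives $q_{\Gamma}=\langle -b^{-1},ab,-a^{-1}\rangle\simeq\langle -a,-b,ab\rangle$ modulo squares, so every Pfister class is realized. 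Composing with the bijection of Theorem \ref{Q} between quaternion algebras and $X_{2}(k)/G_{k}$ then completes the correspondence.

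The only genuinely delicate ingredient is the rigidity packaged in the first step: that the bilinear trace form determines a reduced Freudenthal algebra up to isomorphism. This is an established fact cited from \cite{KMRT} in the paragraph preceding the statement, and I would simply quote it. Everything else is routine quadratic-form bookkeeping, namely the correspondence between trivial-discriminant ternary forms and $2$-fold Pfister forms (Lemma \ref{l2}), the identification of Pfister forms with norms of quaternions, and the one explicit $\Gamma$ exhibited above to clinch surjectivity.
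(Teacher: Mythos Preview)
Your proposal is correct and follows essentially the same approach as the paper: both reduce the classification to the isometry class of the trace form, strip off the fixed piece $\langle 1,1,1\rangle\perp b_{N_k}\otimes(-)$, and identify the remaining trivial-discriminant ternary form with a $2$-fold Pfister form (hence a quaternion algebra). The paper compresses this into two lines by quoting the discussion preceding the statement, whereas you spell out the Witt-cancellation step, invoke Lemma~\ref{l2} explicitly, and supply a concrete $\Gamma$ for surjectivity---but the strategy is the same.
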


\begin{proof}
    As $k$ is the only composition algebra with dimension $1$, the bilinear trace forms for the algebras $\mathcal{H}_{3}(k,\Gamma)$ are uniquely determined by the $2$-fold Pfister forms $\langle\langle a,b\rangle\rangle$, as discussed above. Hence, the result follows.
\end{proof}

So, the above result gives us a parametrization of all Freudenthal algebras with dimension $6$, since any such algebra in dimension $6$ is always reduced over the field $k$ (see \cite{GPR}, Proposition $39.17$, $46.1$ and Theorem $46.8$).

\begin{theorem}\label{6.2}
     \textit{For any $i\in k^{\times}$, the isomorphism classes of the reduced Freudenthal algebras of the form $\mathcal{H}_{3}(k(\sqrt{-i}),\Gamma)$ are in one to one correspondence with the set of all isomorphism classes of octonion algebras containing $k(\sqrt{-i})$ as a subalgebra, under the isomorphisms which fix $k(\sqrt{-i})$ pointwise, i.e., $U_{i}(k)/G_{k}$ in Theorem \ref{O}.}
\end{theorem}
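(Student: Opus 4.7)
The plan is to identify both sides of the claimed bijection with the same family of $3$-fold Pfister forms over $k$, namely those of the shape $\langle\langle -i, *, *\rangle\rangle$, and deduce the bijection by transitivity.

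First, for the Freudenthal side I would apply the criterion recalled immediately before the statement: $\mathcal{H}_{3}(C,\Gamma)$ is determined up to isomorphism by the isometry class of its bilinear trace form $T \simeq \langle 1,1,1\rangle \perp b_{N_{C}} \otimes \langle -a,-b,ab\rangle$, and this isometry class is in turn determined by the pair consisting of $N_{C}$ and the $3$-fold Pfister form $b_{N_{C}} \otimes \langle\langle a,b\rangle\rangle$. With $C = k(\sqrt{-i})$ fixed, $b_{N_{C}}$ is fixed (as a scalar multiple of $N_{C} = \langle\langle -i\rangle\rangle$), so the isomorphism classes of the algebras $\mathcal{H}_{3}(k(\sqrt{-i}),\Gamma)$ correspond bijectively to the isometry classes of $3$-fold Pfister forms of the shape $\langle\langle -i,a,b\rangle\rangle$ as $(a,b)$ ranges over $(k^{\times})^{2}$.

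Next, for the octonion side I would argue via Cayley-Dickson doubling: any octonion $C'$ over $k$ containing $k(\sqrt{-i})$ as a composition subalgebra is obtained from $k(\sqrt{-i})$ by two successive doublings with scalar parameters $-\alpha,-\beta \in k^{\times}$, producing an octonion whose norm is the $3$-fold Pfister form $N_{C'} \simeq \langle\langle -i,\alpha,\beta\rangle\rangle$, and conversely every such Pfister form is realized. Two such octonions are isomorphic under an isomorphism fixing $k(\sqrt{-i})$ pointwise exactly when the corresponding norms are isometric by an isometry preserving the embedded $\langle\langle -i\rangle\rangle$ summand. By Theorem \ref{O}, this condition is equivalent to the isometry of the associated rank $3$ trivial-discriminant $k(\sqrt{-i})/k$-hermitian forms on $k(\sqrt{-i})^{\perp}$, and via the $k$-trace these hermitian forms carry precisely the data of the $3$-fold Pfister forms $\langle\langle -i,\alpha,\beta\rangle\rangle$.

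Composing the two identifications with the common set of Pfister forms yields the desired bijection with $U_{i}(k)/G_{k}$, the latter being identified with the set of $k(\sqrt{-i})$-isomorphism classes of octonions containing $k(\sqrt{-i})$ by Theorem \ref{O}. The main obstacle will be the verification in Step~$2$ that the ``fixing $k(\sqrt{-i})$ pointwise'' equivalence on octonions matches, under these identifications, the trace-form equivalence on the Freudenthal side; this reduces to routine Pfister-form manipulations using Witt cancellation (a Pfister form is determined by its pure subform, and conversely). As a sanity check, when $-i \in k^{\times 2}$ both sides collapse to a single class: $b_{N_{C}}$ is hyperbolic, so $T$ becomes independent of $(a,b)$, and simultaneously the only octonion containing $k \times k$ is the split one $Zorn(k)$, consistent with $|U_{i}(k)/G_{k}|=1$.
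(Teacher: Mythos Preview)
Your proposal is correct and follows essentially the same route as the paper's three-line proof: both sides are identified with isometry classes of $3$-fold Pfister forms of the shape $\langle\langle -i, a, b\rangle\rangle$. The only cosmetic difference is that the paper invokes the Elman--Lam Pfister subform theorem (a $3$-fold Pfister form contains $\langle 1,i\rangle$ as a subform iff it is of that shape) where you argue via explicit Cayley--Dickson doubling, and the paper is in fact less explicit than you are about reconciling the ``fixing $k(\sqrt{-i})$ pointwise'' condition with mere isometry of norms.
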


\begin{proof}
From the earlier discussion, the bilinear trace forms for algebras of this type are uniquely determined by the isometry classes of the $3$-fold Pfister forms that contain $\langle 1, i\rangle$ as a subform (see \cite{EL}, Theorem $2.7$), i.e., isomorphism classes of octonion algebras that contain $k(\sqrt{-i})$ as a subalgebra. Hence, we have the bijection, as claimed.   
\end{proof}

From the above result, we can get a parametrization of all reduced Freudenthal algebras with dimension $9$, which we state in the following result.

\begin{theorem}\label{6.3}
    The set of all isomorphism classes of reduced Freudenthal algebras with dimension $9$ over the field $k$, is in bijection with the orbit space $X^{ss}_{k}/(GSp_{6}\times GL_{1})(k)$ in Theorem \ref{5.6}.
\end{theorem}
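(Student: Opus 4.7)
The plan is to reduce Theorem \ref{6.3} to Theorem \ref{6.2} by first isolating the isomorphism class of the underlying 2-dimensional composition subalgebra, and then gluing the fiberwise bijections over $k^{\times}/k^{\times 2}$ to obtain the required bijection with $X^{ss}_k/(GSp_6 \times GL_1)(k)$.

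First I would recall that any reduced Freudenthal algebra of dimension $9$ is necessarily of the form $\mathcal{H}_3(C,\Gamma)$ with $\dim_k C = 2$ and $\Gamma \in GL_3(k)$ diagonal, since $\dim \mathcal{H}_3(C,\Gamma) = 3(\dim C + 1)$. Up to isomorphism $C \simeq k(\sqrt{-i})$ for a unique class $i \in k^{\times}/k^{\times 2}$. The next step is to verify that the class of $i$ is an invariant of the Freudenthal algebra: using the formula
\[
T \simeq \langle 1,1,1\rangle \perp b_{N_C}\otimes \langle -a,-b,ab\rangle
\]
for the bilinear trace form recalled just before Theorem \ref{6.2}, one reads off the isometry class of $b_{N_C}$ (hence of $N_C$, hence of $C$) from the isometry class of $T$. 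Since isomorphism of reduced Freudenthal algebras of dimension $9$ is detected by isometry of $T$ (cf.\ \cite{KMRT}, Chapter IX), this shows that the associated $2$-dimensional composition algebra $k(\sqrt{-i})$ is a genuine invariant of the $9$-dimensional reduced Freudenthal algebra.

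Consequently the set of isomorphism classes of $9$-dimensional reduced Freudenthal algebras decomposes as the disjoint union, over $i \in k^{\times}/k^{\times 2}$, of the isomorphism classes of algebras of the form $\mathcal{H}_3(k(\sqrt{-i}),\Gamma)$. Applying Theorem \ref{6.2} to each piece yields a bijection of this set with
\[
\bigsqcup_{i\in k^{\times}/k^{\times 2}} U_i(k)/G_k.
\]
I would then invoke the orbit-space decomposition
\[
X^{ss}_k/(GSp_6\times GL_1)(k) \;=\; \bigsqcup_{i\in k^{\times}/k^{\times 2}} U_i(k)/G_k
\]
established in the discussion preceding Theorem \ref{5.4} (and reused in Theorem \ref{5.6}) to identify the disjoint union above with $X^{ss}_k/(GSp_6 \times GL_1)(k)$, which yields the desired bijection.

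The only delicate step is Step 2 above, i.e.\ the well-definedness of the underlying $2$-dimensional composition algebra as an invariant of the Freudenthal algebra; this is what ensures that the disjoint union over $k^{\times}/k^{\times 2}$ is really disjoint and that no piece gets identified with another under the final bijection. Once this is in place, the rest of the argument is a direct concatenation of Theorem \ref{6.2} with the decomposition already used for $(GSp_6\times GL_1,X)$.
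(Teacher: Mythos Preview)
Your proof is correct and follows the same approach as the paper: apply Theorem \ref{6.2} fiberwise and identify the resulting disjoint union with $X^{ss}_{k}/(GSp_{6}\times GL_{1})(k)=\bigsqcup_{i\in k^{\times}/k^{\times 2}}U_{i}(k)/G_{k}$ from Section \ref{5}. Your explicit justification that the class of $C$ (equivalently of $i$) is recoverable from the trace form is exactly the content of the sentence ``the isometry class of $T$ uniquely determines the isometry classes of $N_{C}$ and $q$'' recorded just before Theorem \ref{6.2}, so nothing is missing.
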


\begin{proof}
    The proof follows from Theorem \ref{6.2} and the fact that $X^{ss}_{k}/(GSp_{6}\times GL_{1})(k)=\bigsqcup_{i\in k^{\times}/k^{\times 2}}U_{i}(k)/G_{k}$, as we have seen in Section \ref{5}.
\end{proof}

For the higher dimensional cases, we cannot proceed with our parametrizations of composition algebras. It will be interesting to find a parametrization which gives us all Freudenthal algebras defined over $k$.

\begin{remark} 
Theorem \ref{6.2} and Theorem \ref{6.3} give us another interpretation of the orbit spaces $X^{ss}_{k}/(Sp_{6}\times GL_{1})(k)$ and $X^{ss}_{k}/(GSp_{6}\times GL_{1})(k)$, in terms of the reduced Freudenthal algebras of dimension $9$.
\end{remark}

\vspace{1 mm}

Now we consider the reduced Freudenthal algebras $\mathcal{H}_{3}(C,\Gamma)$ where $C$ is a quaternion or octonion algebra and $\Gamma=\text{diag}(\gamma_{1},\gamma_{2},\gamma_{3})\in GL_{3}(k)$. We assign an element $(\mathcal{O}(x),\mathcal{O}(y))$ from the set $X_{2}(k)/(GSp_{6}\times GL_{1})(k)\times X_{2}(k)/(GSp_{6}\times GL_{1})(k)$ to the $2$-tuple of Pfister forms $(N_{x},N_{x}\otimes N_{y})$ which determines a unique algebra of the form $\mathcal{H}_{3}(C,\Gamma)$, where $C$ is a quaternion algebra and $N_{x},N_{y}$ denote the norms of the quaternion algebras associated to the orbits $\mathcal{O}(x),\mathcal{O}(y)$, respectively (see Theorem \ref{5.3}). All reduced Freudenthal algebras of this form can be obtained from these tuples. But the assignment is not injective. Similarly, all algebras of the form $\mathcal{H}_{3}(C,\Gamma)$, where $C$ is an octonion, can be obtained from the set $X^{ss}_{k}/(GSp_{6}\times GL_{1})(k) \times X_{2}(k)/(GSp_{6}\times GL_{1})(k)$ by the same process, and this assignment is also not injective. The same can be done for $\text{dim}(C)=1,2$.

\vspace{2 mm}

\noindent \textbf{Tit's constructions for reduced Freudenthal algebras of dimension 27:} Any Freudenthal algebra of dimension $27$ can be obtained by Tit's constructions of first type or second type (for details see \cite{KMRT}, Chapter IX). In the previous section, we defined a map $\gamma_{ss} : X^{ss}_{k}/(GSp_{6}\times GL_{1})(k)\rightarrow \{$Isomorphism classes of octonion algebras over $k \} $, which is surjective. We have just described above how any reduced Freudenthal algebra of dimension $27$ can be obtained from the set $X^{ss}_{k}/(GSp_{6}\times GL_{1})(k) \times X_{2}(k)/(GSp_{6}\times GL_{1})(k)$. We can easily see that $\gamma_{ss}^{-1}(Zorn(k))\times X_{2}(k)/(GSp_{6}\times GL_{1})(k)$ produces all the first Tit's constructions for reduced Freudenthal algebras of dimension $27$ and its complement in $ X^{ss}_{k}/(GSp_{6}\times GL_{1})(k)\times X_{2}(k)/(GSp_{6}\times GL_{1})(k)$ produces reduced Freudenthal algebras of dimension $27$ which can be obtained by second Tit's constructions only (see \cite{KMRT}, Chapter IX, Proposition $(40.5)$). For more details on Tit's constructions of first and second type, we refer to (\cite{KMRT}, Chapter IX).


\subsection{PV associated to Freudenthal algebras} Let us consider the Freudenthal algebra $\mathcal{A}=\mathcal{H}_{3}(Zorn(k),I_{3})$ with dimension $27$, defined over the field $k$ and $E_{6}$ be the group of isometries of the cubic norm defined on it, which is a $k$-split group of type $E_{6}$. Then $(E_{6}\times GL_{1},\mathcal{A})$ is a PV with the identity element as a generic point, and the action of $GL_{1}$ is given by scalar multiplication. The relative invariant is the cubic norm itself. So, the semi-stable set $\mathcal{A}^{ss}$ is the set of all elements in $\mathcal{A}$ with non-zero norms. It is easy to check that the identity component of the stabilizer subgroup of the identity element $1\in \mathcal{A}$ is $\text{Aut}(\mathcal{A})$ (see \cite{JN1}, Theorem $4$), which is a simple $k$-split group of type $F_{4}$. So, $$\text{dim}(E_{6}\times GL_{1})-\text{dim}(\text{Stab}_{(E_{6}\times GL_{1})}(1))=79-52=\text{dim}(\mathcal{A}).$$ Under the $E_{6}(k)$-action there are two non-zero orbits (i.e., orbits other than $\{0\}$) in the hyperplane defined by the relative invariant, with representatives $\text{diag}(1,0,0)$ and $\text{diag}(1,1,0)$ (see \cite{KS} for details); containing the elements with Jordan rank $1$ and $2$, respectively. The $E_{6}(k)$-orbits in the semi-stable set are the fibers of the non-zero scalars under the cubic norm, with representatives $\text{diag}(1,1,i), i\in k^{\times}$; which consist of the elements with Jordan rank $3$ (see \cite{KS}). So, we can get the $k$-rational orbit space in this PV. In general, if we consider any split composition algebra $(C,N_{C})$, the corresponding reduced Freudenthal algebra $\mathcal{H}_{3}(C,I_{3})$, and $NP(\mathcal{H}_{3}(C,I_{3}))$ as the group of isometries of the cubic norm defined on $\mathcal{H}_{3}(C,I_{3})$; then $$(NP(\mathcal{H}_{3}(C,I_{3}))\times GL_{1},\mathcal{H}_{3}(C,I_{3}))$$ is a PV with the $GL_{1}$-action given by scalar multiplication. The cubic norm is the relative invariant, and the identity element is a generic point. The orbit decompositions for these PV's are the same as in the earlier case (see \cite{KS} for details). If every element in the field $k$ is a square, we also get the same for the PV $(NP(\mathcal{H}_{3}(k,I_{3}))\times GL_{1}, \mathcal{H}_{3}(k,I_{3}))$. We refer to (\cite{KS}), (\cite{JN1}) and (\cite{JN2}) for more details on the actions of the groups $NP(\mathcal{H}_{3}(C,I_{3}))$ on $\mathcal{H}_{3}(C,I_{3})$.

\vspace{1 mm}

If we take $C$ to be any division composition algebra in the above discussion, then we can easily see that $(NP(\mathcal{H}_{3}(C,I_{3}))\times GL_{1},\mathcal{H}_{3}(C,I_{3}))$ are PV's as well, with the same actions and relative invariants described above. We can get the orbit decompositions for these PV's from (\cite{JN1}, Theorem $8$, Theorem $9$), for $\text{dim}(C)=1,2,4$ and (\cite{JN2}, Proposition $1$, Theorem $12$), for $\text{dim}(C)=8$. We may also look at (\cite{RRS}) when the group $NP(\mathcal{H}_{3}(C,I_{3}))$ is a group of type $E_{6}$.

\vspace{2 mm}

\noindent \textbf{Division algebras of dimension 9:} Now, as an example, we discuss the case of Freudenthal division algebras with dimension $9$. Any algebra of this type is of the form $$\mathcal{J}=\mathcal{H}(B,\sigma)=\{x\in B:\sigma(x)=x\},$$ where $B$ is a central division algebra of degree $3$ over a quadratic extension $K$ of $k$ and $\sigma$ is an involution of the second kind (see \cite{KMRT}, Chapter IX for details). For the algebra $B$, we cannot have an anti-automorphism which fixes the center $K$ elementwise, as the order of $B$ in the Brauer group is not $2$. So, the group of all isometries of the cubic norm $N_{\mathcal{J}}$ defined on $\mathcal{J}$ is generated by the linear transformations of the form $$x\mapsto \lambda \sigma(a)xa, \text{ for }x\in \mathcal{J}, a\in B^{\times}, \lambda\in GL_{1},$$ where $\lambda^{3}N(\sigma(a)a)=1$ (see \cite{JN1}, Theorem $8$). We denote this group by $NP(\mathcal{J})$, as we have done earlier. Then, $(NP(\mathcal{J})\times GL_{1},\mathcal{J})$ is a PV, where the action of $GL_{1}$ is given by scalar multiplication and the cubic norm $N_{\mathcal{J}}$ is a relative invariant. So, the semi-stable set is given by $$\mathcal{J}^{ss}=\{x\in \mathcal{J}: N_{\mathcal{J}}(x)\neq 0 \}.$$ We denote the set of all $k$-rational points in $NP(\mathcal{J})\times GL_{1}$ and $\mathcal{J}^{ss}$ by $(NP(\mathcal{J})\times GL_{1})(k)$ and $\mathcal{J}^{ss}_{k}$, respectively.

\vspace{1 mm}

Let $(B,\sigma)$ be as above. We define the \textit{group of similitudes} in $B$ (see \cite{KMRT}, Chapter III) as $$GU(B,\sigma)=\{x\in B:\sigma(x)x\in k^{\times}\}.$$

\vspace{1 mm}

Now, $B$ has a natural right $B$-module structure. For a non-zero element $x\in \mathcal{J}$ we define a $B$-hermitian form on the right $B$-module $B$ by $$\langle x\rangle_{B}(a_{1},a_{2})=\sigma(a_{1})xa_{2}, \text{ for } a_{1},a_{2}\in B$$ (see \cite{HKRT}, Section $2$). We call them \textit{rank $1$ $B$-hermitian forms} on $B$. We say that two rank $1$ $B$-hermitian forms $\langle x\rangle_{B}$ and $\langle y\rangle_{B}$, for $x,y\in \mathcal{J}$, are \textit{isometric} if there exists some $z\in B^{\times}$ such that $x=\sigma(z)yz$, and \textit{similar} if $x=\lambda\sigma(z)yz$ for some $\lambda \in k^{\times}, z\in B^{\times}$. We refer to (\cite{HKRT}) for more details.

\begin{theorem}
    The orbit space in the semi-stable set $\mathcal{J}^{ss}_{k}/(NP(\mathcal{J})\times GL_{1})(k)$ is in bijection with the similarity classes of rank $1$ $B$-hermitian forms defined on $B$.
\end{theorem}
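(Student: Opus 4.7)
The plan is to set up the bijection via the assignment $x \mapsto \langle x\rangle_{B}$ and show that the $(NP(\mathcal{J})\times GL_{1})(k)$-orbit relation on $\mathcal{J}^{ss}_{k}$ coincides exactly with the similarity relation on rank $1$ $B$-hermitian forms.

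First I would give an explicit description of $NP(\mathcal{J})(k)$. The paper records that the group is generated by transformations of the form $\rho_{a,\lambda}(w) = \lambda \sigma(a) w a$ with $a \in B^{\times}$, $\lambda \in k^{\times}$, and $\lambda^{3} N_{K/k}(N_{B}(a)) = 1$. A direct check shows that the family of such transformations is closed under composition, since
\[
(\lambda_{1}\sigma(a_{1})(\cdot)a_{1}) \circ (\lambda_{2}\sigma(a_{2})(\cdot)a_{2})(w) \;=\; \lambda_{1}\lambda_{2}\,\sigma(a_{2}a_{1})\,w\,(a_{2}a_{1}),
\]
and the constraint on the parameters is multiplicative. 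Hence every element of $NP(\mathcal{J})(k)$ is itself some $\rho_{a,\lambda}$, and the combined $(NP(\mathcal{J})\times GL_{1})(k)$-orbit of a point $x \in \mathcal{J}^{ss}_{k}$ consists of the elements $\mu\,\rho_{a,\lambda}(x) = (\mu\lambda)\,\sigma(a)\,x\,a$ with $\mu$ ranging freely over $k^{\times}$.

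Next I would define the map $\Phi : \mathcal{J}^{ss}_{k} \longrightarrow \{\text{similarity classes of rank } 1 \ B\text{-hermitian forms on } B\}$ by $x \mapsto [\langle x\rangle_{B}]$. Surjectivity is immediate: a rank $1$ $B$-hermitian form on the right $B$-module $B$ is determined by an element $h \in B$, where the hermitian condition forces $\sigma(h) = h$, i.e., $h \in \mathcal{J}$, and non-degeneracy (since $B$ is a division algebra) forces $N_{\mathcal{J}}(h) \neq 0$, i.e., $h \in \mathcal{J}^{ss}_{k}$. The map $\Phi$ factors through the orbit space: if $y = (\mu\lambda)\sigma(a) x a$ as above, then $x = (\mu\lambda)^{-1}\sigma(a^{-1}) y a^{-1}$, so $\langle x\rangle_{B}$ and $\langle y\rangle_{B}$ are similar via the pair $(z,\lambda_{0}) = (a^{-1},(\mu\lambda)^{-1})$.

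The crux is the injectivity of the induced map $\bar{\Phi} : \mathcal{J}^{ss}_{k}/(NP(\mathcal{J})\times GL_{1})(k) \longrightarrow \{\text{similarity classes}\}$. Given $y = \nu\,\sigma(a)\,x\,a$ with $\nu \in k^{\times}$ and $a \in B^{\times}$, I want to realize this as $\mu\,\rho_{a',\lambda}(x)$ for some valid triple $(a',\lambda,\mu)$. The natural attempt is to set $a' = a$ and choose $\lambda \in k^{\times}$ with $\lambda^{3} = N_{K/k}(N_{B}(a))^{-1}$, followed by $\mu = \nu/\lambda$. The \emph{main obstacle} is the existence of this cube root in $k^{\times}$, equivalently $N_{K/k}(N_{B}(a)) \in k^{\times 3}$. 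The plan is to show that by modifying the representative $(\nu,a)$ of the similarity class --- multiplying $a$ on the left or right by central units $c \in K^{\times}$ (which rescales $\sigma(a)xa$ by $N_{K/k}(c) \in k^{\times}$ and alters $N_{K/k}(N_{B}(a))$ only by a cube $N_{K/k}(c)^{3}$), and by similitudes belonging to $GU(\langle x\rangle_{B})$ or $GU(\langle y\rangle_{B})$ --- one can always reduce to a representative for which the cube-root constraint is satisfied, using the specific structure of $B$ as a central division $K$-algebra of degree $3$ with an involution of the second kind. I expect this reduction to be the heart of the argument, and it may alternatively be phrased cohomologically: compute the stabilizer $H \subset NP(\mathcal{J})\times GL_{1}$ of the generic point $1 \in \mathcal{J}^{ss}$, identify $H$ (up to the $\mu_{3}$-twist coming from the cubic constraint) with $GU(B,\sigma)$, and apply the exact sequence of Proposition \ref{ESC} to match $\mathcal{J}^{ss}_{k}/(NP(\mathcal{J})\times GL_{1})(k)$ with the kernel of $H^{1}(k, H) \to H^{1}(k, NP(\mathcal{J})\times GL_{1})$, finally matching this set with similarity classes of rank $1$ $B$-hermitian forms.
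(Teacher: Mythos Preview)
Your approach is essentially the same as the paper's: both arguments run through the explicit description of $NP(\mathcal{J})(k)$ as the set of transformations $w\mapsto\lambda\,\sigma(a)\,w\,a$ with $\lambda^{3}N(\sigma(a)a)=1$, and both identify the combined $(NP(\mathcal{J})\times GL_{1})(k)$-orbit of $x$ with the set of $\nu\,\sigma(a)\,x\,a$, which is exactly the similarity class of $\langle x\rangle_{B}$.

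The difference is one of thoroughness. The paper's proof is very brief: it records that $\mathcal{J}^{ss}_{k}=\mathcal{J}_{k}\setminus\{0\}$, quotes the description of $NP(\mathcal{J})$, notes that $GL_{1}$ acts by scalars, and then simply declares the bijection. It does \emph{not} isolate or address the cube-root issue you flagged as the ``main obstacle'' --- namely whether, for an arbitrary $a\in B^{\times}$, one can actually find $\lambda\in k^{\times}$ with $\lambda^{3}N(\sigma(a)a)=1$. The paper treats the passage from the constrained pair $(\lambda,\mu)$ to an unconstrained scalar $\nu=\mu\lambda$ as tautological once the free $GL_{1}$-factor is present. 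Your proposed resolution (adjusting the representative $(\nu,a)$ by central units or similitudes, or recasting everything via the exact cohomology sequence and $GU(B,\sigma)$) goes well beyond what the paper writes down; indeed, you have put your finger on a step that the paper's argument does not justify explicitly. So your plan is not wrong, and where it is incomplete it is no more incomplete than the paper's own proof.
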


\begin{proof}
    As $\mathcal{J}$ is a division algebra, all non-zero elements in $\mathcal{J}$ are in the semi-stable set $\mathcal{J}^{ss}$. Now, the group $NP(\mathcal{J})$ is generated by the linear transformations of the form $$x\mapsto \lambda \sigma(a)xa, \text{ for }x\in \mathcal{J}, a\in B^{\times}, \lambda \in GL_{1},$$ where $\lambda^{3}N(\sigma(a)a)=1$ (see \cite{JN1}, Theorem $8$), and the action of $GL_{1}$ on $\mathcal{J}$ is given by scalar multiplication. So, the $k$-rational orbit space in the semi-stable set is in bijection with the similarity classes of rank $1$ $B$-hermitian forms defined on the right $B$-module $B$.

\end{proof}

\begin{remark} 
We have the following observations about the orbit space in the semi-stable set described above.

\begin{enumerate}
    \item  Let $\mathcal{J}=\mathcal{H}(B,\sigma)$ be a Freudenthal division algebra, as described above, and $x,y\in \mathcal{J}\cap B^{\times}$. Then $\sigma_{1}=\text{Int}(x)\circ \sigma$ and $\sigma_{2}=\text{Int}(y)\circ\sigma$ are involutions of the second kind defined on $B$, and $(B,\sigma_{1})$, $ (B,\sigma_{2})$ are isomorphic (as $K$-algebras with involution) if and only if the rank $1$ $B$-hermitian forms $\langle x\rangle_{B}$, $\langle y\rangle_{B}$ are similar (see \cite{HKRT}, Lemma $1$). So, the orbit space in the semi-stable set $\mathcal{J}^{ss}_{k}/(NP(\mathcal{J})\times GL_{1})(k)$ classifies all involutions of these forms defined on $B$.

    \item \textbf{A parametrization of involutions of the second kind:} If $(B,\sigma)$ is a central simple algebra with center $K$ as a quadratic extension of $k$ and $\sigma$ is an involution of the second kind, for every involution of the second kind $\sigma_{1}$ on $B$ $\exists$ $u\in B^{\times}\cap \mathcal{H}(B,\sigma)$, uniquely determined up to a scalar multiplication such that $\sigma_{1}=\text{Int}(u)\circ \sigma$ (see \cite{KMRT}, Chapter I, Proposition $2.18$). Hence, the orbit space in the semi-stable set described above parametrizes all involutions of the second kind defined on $B$.
   
    \item \textbf{A parametrization of division algebras:} The algebra $\mathcal{H}(B,\sigma)$ is determined up to isomorphism by the algebra $(B,\sigma)$ (see \cite{KMRT}, Chapter IX, Proposition $37.6$). As the above theorem parametrizes all involutions of the second kind defined on $B$, we can get a parametrization of all Freudenthal division algebras of dimension $9$.

     \item \textbf{Cohomological interpretation:} The orbit space $\mathcal{J}^{ss}_{k}/(NP(\mathcal{J})\times GL_{1})(k)$ from the above theorem can be identified with the cohomology set $H^{1}(k,GU(B,\sigma))$ (see \cite{KMRT}, Chapter VII).
\end{enumerate}

\end{remark}

\end{document}